\colorlet{shadecolor}{blue!15}
\newtheorem{theorem}{Theorem}[section]
\newtheorem{conj}{Conjecture}
\newtheorem{corollary}[theorem]{Corollary}
\newtheorem{lemma}[theorem]{Lemma}
\newtheorem{proposition}[theorem]{Proposition}
\newtheorem{definition}[theorem]{Definition}
\newtheorem{remark}[theorem]{Remark}
\newtheorem{fact}[theorem]{Fact}
\newtheorem{question}[conj]{Question}
\newcommand{\be}[1]{\begin{equation}\label{#1}}
\newcommand{\ee}{\end{equation}}
\numberwithin{equation}{section}
\newcommand{\ba}[1]{\begin{align}\label{#1}}
\newcommand{\ea}{\end{align}}
\numberwithin{equation}{section}
\newcommand{\ben}{\begin{equation*}}
\newcommand{\een}{\end{equation*}}
\numberwithin{equation}{section}
\newenvironment{proof}[1][\relax]
  {\paragraph{Proof\ifx#1\relax\else~of #1\fi}}%
  {~\hfill$\square$\par\bigskip}
\newcommand{\calC}{\mathcal{C}}
\newcommand{\calD}{\mathcal{D}}
\newcommand{\calF}{\mathcal{F}}
\newcommand{\calG}{\mathcal{G}}
\newcommand{\calH}{\mathcal{H}}
\newcommand{\calQ}{\mathcal{Q}}
\newcommand{\calR}{\mathcal{R}}
\newcommand{\bbE}{\mathbb{E}}
\newcommand{\bbH}{\mathbb{H}}
\newcommand{\bbN}{\mathbb{N}}
\newcommand{\bbR}{\mathbb{R}}
\newcommand{\bbZ}{\mathbb{Z}}
\newcommand{\bfC}{\mathbf C}
\newcommand{\ep}{\varepsilon}
\newcommand{\eps}{\varepsilon}
\newcommand{\la}{\lambda}
\newcommand{\La}{\Lambda}
\newcommand{\Ann}{{\rm Ann}}
\newcommand{\ind}{\boldsymbol 1}
\setlist[itemize]{itemsep=0pt, topsep=2pt}
\setlist[enumerate]{itemsep=1pt, topsep=4pt}
\newcommand{\rk}[1]{\bgroup\color{red}%
  \par\medskip\hrule\smallskip%
  \noindent\textbf{#1}%
  \par\smallskip\hrule\medskip\egroup}
\newcommand{\Int}{{\rm Int}}
\renewcommand{\ep}{\varepsilon}
\newcommand{\Circ}{\mathrm{Circ}}
\title{Planar random-cluster model: fractal properties of the critical phase}
\author{Hugo Duminil-Copin\footnote{Universit\'e de Gen\`eve (hugo.duminil@unige.ch) and Institut des Hautes \'Etudes Scientifiques (duminil@ihes.fr)}\ \ and Ioan Manolescu\footnote{Universit\'e de Fribourg (ioan.manolescu@unifr.ch)}\ \ and Vincent Tassion\footnote{ETHZ (vincent.tassion@math.ethz.ch)}}
\date{\today}
\begin{document}
\maketitle

\begin{center}
{\em In memory of Harry Kesten (1931-2019).}
\bigbreak
\end{center}
\begin{abstract}
  This paper is studying the critical regime of the planar random-cluster model on $\mathbb Z^2$ with cluster-weight $q\in[1,4)$. More precisely, we prove {\em crossing estimates in quads} which are uniform in their boundary conditions and depend only on their extremal lengths. They imply in particular that any fractal boundary is touched by macroscopic clusters, uniformly in its roughness or the configuration on the boundary. Additionally, they imply that {\em any sub-sequential scaling limit of the collection of interfaces between primal and dual clusters is made of loops that are non-simple}.

We also obtain a number of properties of so-called arm-events: 
{\em three universal critical exponents} (two arms in the half-plane, three arms in the half-plane and five arms in the bulk), 
{\em quasi-multiplicativity} and {\em well-separation} properties (even when arms are not alternating between primal and dual), and the fact that the {\em four-arm exponent is strictly smaller than 2}. These results were previously known only for Bernoulli percolation ($q = 1$)
and the FK-Ising model ($q = 2$). 

Finally, we prove new bounds on the one, two and four arms exponents for $q\in[1,2]$, as well as the one-arm exponent in the half-plane. These improve the previously known bounds, even for Bernoulli percolation.
\end{abstract}

\section{Introduction}
 
\subsection{Motivation}

Understanding the behaviour of physical systems undergoing a continuous phase transition at their critical point is one of the major challenges of modern statistical physics, both on the physics and the mathematical sides. In this paper, we focus on percolation systems which provide models of random subgraphs of a given lattice. Bernoulli percolation is maybe the most studied such model, and breakthroughs in the understanding of its phase transition have often served as milestones in the  history of statistical physics.  The random-cluster model (also called Fortuin-Kasteleyn percolation), another example of a percolation model, was introduced by Fortuin and Kasteleyn around 1970 \cite{For70,ForKas72} as a generalisation of Bernoulli percolation. 
It was found to be related to many other models of statistical mechanics, including the Ising and Potts models, and to exhibit a very rich critical behaviour. 

The arrival of the renormalization group (RG) formalism (see \cite{Fis98} for a historical
exposition) led to a (non-rigorous) deep physical and geometrical understanding of continuous phase transitions. The RG formalism suggests that ``coarse-graining'' renormalization transformations correspond to appropriately changing the scale and the parameters of the model under study. The large scale limit of the critical regime then arises as the fixed
point of the renormalization transformations. 
A striking consequence of the RG formalism is that, the critical fixed point being usually unique, the scaling limit at the critical point must satisfy translation, rotation
and scale invariance. In seminal papers \cite{BPZ84b,BPZ84a}, Belavin, Polyakov and
Zamolodchikov went even further by suggesting a much stronger invariance of statistical physics models at criticality: since the scaling limit
quantum field theory is a local field, it should be invariant by any map which is
{\em locally} a composition of translation, rotation and homothety, which led them to postulate full conformal invariance. These papers gave birth to Conformal Field Theory, one of the most studied domains of modern physics. Of particular importance from the point of view of physics and for the relevance of our paper is the fact that the scaling limits of different random-cluster models at criticality are expected to be related to a range of 2D conformal field theories. Existence of a conformally invariant scaling limit was rigorously proved for the random-cluster model in two special cases corresponding to cluster-weight $q=2$ \cite{Smi10,CheDumHon12a,KemSmi16} and $q=1$ (in this case the proof only applies to a related model called site percolation on the triangular lattice \cite{Smi01}) only.

For percolation models in two dimensions, conformal invariance translates into predictions for so-called crossing probabilities of topological rectangles (also called quads): as the scale of the quad increases to infinity, the crossing probability should converge to a quantity that depends only on the extremal distance of the quad.   
In this paper, we show that crossing probabilities of quads are bounded in terms of the extremal distance of the quad only, thus hinting to their conformal invariance. In addition, we prove that pivotal points are abundant, a fact which is very useful in the study of the geometry of large clusters. While we are currently unable to show existence and conformal invariance of the scaling limit for general cluster weight $q \in [1,4]$, the properties derived in this paper should serve as stepping stones towards a better understanding of the critical phase, as was the case for $q = 1$ and $q = 2$. 

\subsection{Definition of the random-cluster model}\label{sec:1.2}

As mentioned in the previous section, the model of interest in this paper is the random-cluster model, which we now define. 
For background, we direct the reader to the monograph \cite{Gri06} and the lecture notes \cite{Dum17a}.

Consider the square lattice $(\mathbb Z^2,\mathbb E)$, that is the graph with vertex-set $\mathbb Z^2=\{(n,m):n,m\in\mathbb Z\}$ and edges between nearest neighbours. In a slight abuse of notation, we will write $\bbZ^2$ for the graph itself. In this paper we will mainly work with  the random-cluster model on discrete domains that are specific subgraphs of $\mathbb Z^2$ (together with a boundary), defined as follows.

Let $\gamma=(x_0,\ldots,x_{\ell-1})$ be a simple loop on $\mathbb Z^2$, \emph{i.e.}\@ $x_0,\ldots,x_{\ell-1}$ are distinct vertices, and $x_i$ is a neighbour of $x_{i+1}$ for $0\le i<\ell$ (where $x_\ell:=x_0$). Consider the (finite) set $E$ of edges enclosed by the loop (including the edges $\{x_i,x_{i+1}\}$ of $\gamma$) and define the graph $\mathcal D=(V,E)$ induced by $E$. Any graph obtained in this way is called a \emph{(discrete) domain}. Notice that one can always reconstruct the loop $\gamma$ (up to re-ordering of the vertices) from the data of the domain $\mathcal D$, and we define the boundary $\partial \mathcal D$ to be the set of vertices on the loop $\gamma$. We point out that the  boundary of a discrete domain differs from the more standard graph-theoretical notion of boundary: in particular, a vertex of $\partial \mathcal D$ may have all its neighbours in the domain (see Fig.~\ref{fig:domain0}).

\begin{figure}[htbp]
  \centering
  \includegraphics[width=4cm]{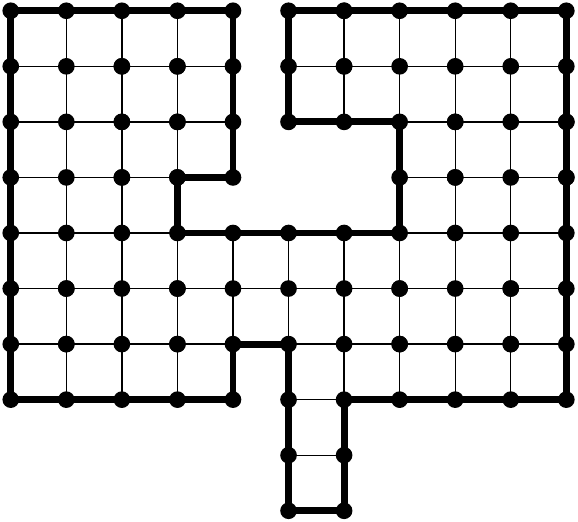}
  \caption{An example of a discrete domain $\mathcal D=(V,E)$. The loop $\gamma$ is represented by the bold line surrounding $\cal D$. The dots represent the elements of the vertex set $V$.}
  \label{fig:domain0}
\end{figure}

A percolation configuration $\omega=(\omega_e:e\in E)$ on a domain $\calD=(V,E)$ is an element of~$\{0,1\}^{E}$. An edge $e$ is said to be {\em open} (in $\omega$) if $\omega_e=1$, otherwise it is {\em closed}. A configuration  $\omega$ is identified to the subgraph of $\calD$ with vertex-set $V$ and edge-set $\{e\in E:\omega_e=1\}$. When speaking of connections in $\omega$, we view $\omega$ as a graph. For $A,B, S \subset V$, we say that $A$ is {\em connected to $B$ in $S$} if there exists a path in $\omega$ going from $A$ to $B$ and using vertices in $S$ only. We denote this event by $A\xleftrightarrow{S}   B$ (we omit $S$ when it is the full domain).
A {\em cluster} is a connected component of $\omega$. 

{\em Boundary conditions} on $\calD$ are given by a partition $\xi$ of $\partial \calD$. We say that two vertices of $\partial \calD$ are {\em wired together} if they belong to the same element of the partition $\xi$.

\begin{definition}  Let $\mathcal D=(V,E)$ be a discrete domain. The random-cluster measure on $\calD$ with edge-weight $p \in (0,1)$, cluster-weight $q>0$ and boundary conditions $\xi$ is given by
\begin{equation}\label{eq:RCM_def1}
	\phi_{\calD,p,q}^\xi[\omega]=\frac{1}{Z^\xi(\calD,p,q)} (\tfrac{p}{1-p})^{|\omega|}q^{k(\omega^\xi)} \qquad  \omega\in \{0,1\}^{E},
\end{equation}
where $|\omega|=\sum_{e\in E}\omega_e$, 
$\omega^\xi$ is the graph obtained from $\omega$ by identifying wired vertices of $\partial \calD$ together, 
$k(\omega^\xi)$ is the number of connected components of $\omega^\xi$, 
and  $Z^\xi(\calD,p,q)$ is a normalising constant called the {\em partition function} which is chosen in such a way that $\phi_{\calD,p,q}^\xi$ is a probability measure. 
\end{definition}

When $q = 1$, $\phi_{\calD,p,q}^\xi$ is a product measure, which is to say that the states of different edges are independent; this model is also called Bernoulli percolation with parameter $p$.
Two specific families of boundary conditions will be of special interest to us. On the one hand, the {\em free} boundary conditions, denoted 0, correspond to no wirings between boundary vertices. On the other hand, the {\em wired} boundary conditions, denoted 1, correspond to all boundary vertices being wired together.

For $p\in (0,1)$, $q\ge1$ and $i=0,1$, the family of measures $\phi_{\calD,p,q}^i$ converges weakly as $\calD$ tends to $\mathbb Z^2$. The limiting measure on $\{0,1\}^{\bbE}$ is denoted by $\phi_{\bbZ^2,p,q}^i$ and is called {\em infinite-volume} random-cluster measures with free or wired boundary conditions, when $i = 0$ or $i = 1$, respectively. 

The random-cluster model undergoes a phase transition at  a critical parameter $p_c=p_c(q)$  in the following sense: if $p>p_c(q)$, the $\phi_{\bbZ^2,p,q}^1$-probability $\theta(p,q)$
that 0 is connected to infinity
is strictly positive, while for $p<p_c(q)$, it is equal to 0.
In the past ten years, considerable progress has been made in the understanding of this phase transition: the critical point was proved in \cite{BefDum12} (see also \cite{DumRaoTas17,DumMan14,DCRT16}) to be equal to 
$$p_c(q)=\frac{\sqrt q}{1+\sqrt q}.$$
It was also proved in \cite{DumSidTas16,BetheAnsatz2,RaySpi19} that the phase transition is continuous (\emph{i.e.}~that $\theta(p_c,q)=0$) if $q\in[1,4]$, and discontinuous (\emph{i.e.}~that $\theta(p_c,q)>0$) for $q>4$. 
\bigbreak
{\em  As we are interested in continuous phase transitions only, in the whole paper we will fix  $q\in[1,4]$ and $p=p_c(q)$, and drop them from the notation. For this range of parameters, there is a unique infinite-volume random-cluster measure, so we omit the superscript corresponding to the boundary conditions and denote it simply  by $\phi_{\bbZ^2}$. }

\subsection{Crossing probabilities in quads}\label{sec:crossing_applications}

 A (discrete) {\em quad} $(\calD; a,b,c,d)$ is a discrete domain $\calD$ along with four vertices $a,b,c,d\in\partial \calD$ found on $\partial \calD$ in counterclockwise order. These vertices define four closed arcs $(ab)$, $(bc)$, $(cd)$, and $(da)$ corresponding to the parts of the boundary between them (here by \emph{closed} arc, we mean that the extremities $x$ and  $y$ belong to the arc $(xy)$).

 In order to define extremal distances associated to discrete quads, let us explain how the discrete domain $\calD$ can be seen as a continuous domain of the plane. First, consider the counter-clockwise loop $\gamma$ around $\mathcal D$ (up to cyclic permutation this loop is unique), and identify it  to a continuous piecewise linear curve in $\mathbb R^2$ by seeing all its edges as segments of length 1. Then, the  continuous domain associated to $\mathcal D$ is  obtained by taking the bounded connected component of $\mathbb R^2\setminus \gamma$.

 The \emph{extremal distance} $\ell_{\calD}\left[\left(ab\right),\left(cd\right)\right]$ between $\left(ab\right)$ and $\left(cd\right)$ inside $\calD$ is defined as the unique $\ell > 0$ such that there exists a conformal map from the continuous domain associated to $\mathcal D$  to the rectangle $(0,1) \times (0,\ell)$, with $a,b,c,d$ being mapped (by the continuous extension of the conformal map) to the corners of $[0,1] \times [0,\ell]$, in counterclockwise order, starting with the lower-left corner.

As mentioned in the previous section, conformal invariance of critical models exhibiting a continuous phase transition may be formulated using crossing probabilities of large quads. More precisely, $(\calD; a,b,c,d)$ is said to be {\em crossed} (from $(ab)$ to $(cd)$) in a configuration $\omega$, if it contains a path of open edges linking $(ab)$ to $(cd)$. It is expected that the probability that the blow up by $n$ of a given quad $(\calD,a,b,c,d)$ is crossed
converges as $n$ tends to infinity to a  non-degenerate limit that depends only on $\ell_{\calD}\left[\left(ab\right),\left(cd\right)\right]$.
 While we are currently unable to prove this result, we show that crossing probabilities remain bounded away from 0 and 1 uniformly in the extremal distance.

\begin{theorem}[Crossing estimates in general quads]\label{thm:sRSW}\label{thm:RSWquads}
Fix  $1\le q< 4$ and $p=p_c(q)$. For every $M>0$, there exists
$\eta=\eta(M)\in (0,1)$ such that for any discrete quad $\left(\calD,a,b,c,d\right)$ and any boundary conditions $\xi$,
\smallbreak
\begin{itemize}
\item if $\ell_{\calD}\left[\left(ab\right),\left(cd\right)\right] \leq M$\,, then $\mathbb{\phi}_{\calD}^{\xi}[\left(ab\right)\xleftrightarrow{\calD}\left(cd\right)] \geq \eta$;
\item if $\ell_{\calD}\left[\left(ab\right),\left(cd\right)\right] \geq M^{-1}$, then $\mathbb{\phi}_{\calD}^{\xi}[\left(ab\right)\xleftrightarrow{\calD}\left(cd\right)] \leq 1-\eta$.
\end{itemize}
\end{theorem}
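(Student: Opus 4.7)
My plan is to reduce the theorem, via planar self-duality, to the lower bound alone, and then establish that lower bound by combining the rectangle RSW theorem with an extremal-length argument that passes from rectangles to arbitrary quads. At $p=p_c(q)$ the random-cluster model on $\bbZ^2$ is self-dual: duality sends the measure in $\calD$ with boundary conditions $\xi$ to the measure in the dual quad $(\calD^*;b,c,d,a)$ with complementary conditions, exchanging wirings with separations. The absence of a primal crossing $(ab)\xleftrightarrow{\calD}(cd)$ is equivalent to the existence of a dual crossing between the dual arcs of $(bc)$ and $(da)$, and the dual quad has extremal distance $1/\ell_{\calD}[(ab),(cd)]$. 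Hence the upper bound in the second item, when $\ell_{\calD}[(ab),(cd)]\ge M^{-1}$, follows from the lower bound in the first item applied to the dual quad (whose extremal distance is then at most $M$). It therefore suffices to produce $\eta(M)>0$ in the first item.

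The starting point for the lower bound is the rectangle RSW theorem of \cite{DumSidTas16}: for each $q\in[1,4)$ and each aspect ratio $\rho>0$, the probability of crossing a lattice rectangle of side lengths $n$ and $\rho n$ in the long direction is bounded below uniformly in $n$ under free boundary conditions. By the FKG inequality and the comparison of boundary conditions valid for $q\ge 1$, free boundary conditions are the worst case for primal crossings, so the same lower bound holds under arbitrary boundary conditions, and also when the rectangle is embedded in a larger random environment (uniformly in the configuration outside).

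To pass from rectangles to a general quad with $\ell_{\calD}[(ab),(cd)]\le M$, I would use the extremal-length interpretation of $\ell_{\calD}$ to exhibit inside $\calD$ a chain of overlapping sub-quads $Q_1,\dots,Q_N$, with $N$ bounded in terms of $M$, each of bounded extremal distance between two marked opposite arcs, whose successive overlaps are themselves sub-quads of bounded extremal distance. The chain is arranged so that $(ab)\subset\partial Q_1$, $(cd)\subset\partial Q_N$, and any simultaneous crossing of the $Q_i$ in the marked directions glues into a primal path from $(ab)$ to $(cd)$ inside $\calD$. The FKG inequality then reduces the desired lower bound in $\calD$ to uniform lower bounds for crossings in each $Q_i$, which I would obtain by an induction on extremal distance with the rectangle bound of the previous paragraph as base case.

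The main obstacle is closing this induction. The sub-quads $Q_i$ are not lattice rectangles but abstract discrete quads whose boundaries may be as rough as $\partial\calD$ itself, so the rectangle RSW theorem does not apply to them directly. One must instead bootstrap from crossings of lattice rectangles lying inside each $Q_i$ to crossings of $Q_i$ itself, uniformly in the boundary conditions and in the roughness of $\partial Q_i$. This step requires a robust ``detach from the boundary'' argument at criticality, and is presumably where the strict inequality $q<4$ is used: at $q=4$ the phase transition becomes marginal and uniform RSW estimates of this kind are expected to fail.
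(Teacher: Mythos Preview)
Your reduction via duality to the lower bound under free boundary conditions is correct and matches the paper's first step. You also correctly identify where the real difficulty lies: crossings must reach boundaries that may be arbitrarily rough, uniformly in that roughness. But your proposal does not close this gap; it is circular.

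Concretely, your inductive scheme decomposes a general quad $(\calD,a,b,c,d)$ into sub-quads $Q_1,\dots,Q_N$ of bounded extremal distance. But, as you yourself note, each $Q_i$ inherits the rough boundary of $\calD$ and is therefore a general quad of exactly the type you are trying to handle. The ``base case'' you invoke is the rectangle bound~\eqref{eq:wRSW}, which applies only to lattice rectangles with flat boundary. So the induction never starts: the step ``bootstrap from crossings of lattice rectangles lying inside each $Q_i$ to crossings of $Q_i$ itself'' is precisely the theorem you are trying to prove, and you give no mechanism for it. The difficulty is not merely technical: with free boundary conditions on a fractal arc, one has to show that an open cluster \emph{touches} that arc, and nothing in your outline produces such a contact.

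The paper resolves this by an entirely different route. It first isolates the key analytic estimate (Proposition~\ref{prop:crucial_exist}): in any $R$-centred domain $\calD$, under $\phi_\calD^0$, the box $\Lambda_R$ is connected to $\partial\calD$ with probability bounded below uniformly in $\calD$. This is proved via (i) a first-moment lower bound (Proposition~\ref{prop:10}) on the number of boundary $r$-boxes connected to $\Lambda_R$, obtained through a new use of the parafermionic observable in corner domains (this is where $q<4$ enters, via Lemma~\ref{lem:mn}), and (ii) a renormalization inequality (Proposition~\ref{prop:renormalization}) that converts the first-moment bound into a uniform positive lower bound on the connection probability. Only once Proposition~\ref{prop:crucial_exist} is established does the paper perform a sub-quad decomposition (Section~\ref{sec:3.3}), and there the sub-quads are handled not by induction but by covering each with $r$-boxes and $r$-seeds and applying Proposition~\ref{prop:crucial_exist} to every seed. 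The parafermionic/renormalization input is the missing idea in your proposal.
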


Such crossing estimates are very useful for the study of the critical model. They initially emerged in the study of Bernoulli percolation in the late seventies under the coined name of Russo-Seymour-Welsh (RSW) theory \cite{Rus78,SeyWel78}. This theory has been instrumental in basically every result on critical Bernoulli percolation on the square lattice since then. Progress in the theory has been made in the past few years during which the RSW theorem was generalised to the random-cluster model first in \cite{BefDum12} for $q\ge1$ and specific boundary conditions, then in \cite{DumHonNol11} for $q=2$ and general boundary conditions, and finally in \cite{DumSidTas16} for $1\le q\le 4$ and arbitrary boundary conditions. 

One of the drawbacks of previous results is that the estimates were {\em restricted to rectangles} (the estimates are not expressed in terms of a conformally invariant measurement of size). This restriction is substantial in terms of applications due to the fact that boundary conditions do influence the configuration heavily in $\calD$, and that the roughness of the boundary could dictate the strength of this influence. For instance, it could a priori prevent the existence of open paths reaching the boundary of a domain, especially if the boundary is fractal (which will be the case if it is the boundary of another cluster).

In Theorem~\ref{thm:sRSW}, the crossing probability  bounds hold in \emph{arbitrary discrete quads with arbitrary boundary conditions}. In particular, they are independent
of the local geometry of the boundary. The  only other instance of such general estimates is the paper \cite{CheDumHon13} treating the specific case of $q=2$ in which much more is known thanks to discrete holomorphic observables. 

\subsection{Applications}

Theorem~\ref{thm:sRSW} has many implications for the study of the critical regime. We simply mention them briefly below, and refer to the corresponding sections for further details.
\begin{description}
\item[Tightness of interfaces:] It was recognised by Aizenman and Burchard \cite{AizBur99} that crossing estimates imply tightness when considering the scaling limit of interfaces (see Theorem~\ref{thm:tightness}). 
While tightness for random cluster interfaces was already proved \cite{KemSmi16,CheDumHon12a} using previously known crossing estimates, we would like to mention that the implication is quite straightforward when using Theorem~\ref{thm:sRSW}. 
\item[Non-simple curves in the scaling limit:] Theorem~\ref{thm:sRSW} implies that at large scales, macroscopic clusters typically touch each other and that their boundaries are non-simple (see Theorem~\ref{thm:self-touching}).
Let us mention that the family of interfaces describing boundaries of large cluster in the critical random-cluster model with cluster-weight $q\in(0,4]$ is conjectured  \cite{RohSch05} to converge to the Conformal Loop Ensemble (CLE) \cite{SW} with parameter 
$$\kappa=\kappa(q):=4\pi/\arccos(-\sqrt q/2).$$
Thus, our result rigorously excludes the possibility that the scaling limit of random-cluster models with $q \in [1,4)$ is described by a CLE with parameter $\kappa\le 4$ (as these are made of simple loops not touching each other).
\item[Quasi-multiplicativity, localization, well-separation for arm events:] While these properties were already obtained in specific cases ($q=1,2$, or general $q \in [1,4]$ but only for alternating arm-events with an even number of arms), we prove this statement for the first time in complete generality (see Propositions~\ref{prop:separation},~\ref{prop:quasimultiplicativity}, and~\ref{prop:localization}). 
\item[Universal arm exponents:] We obtain up-to-constant estimates for the probability of five alternating arms in the full plane, and two and three alternating arms in a half-plane (see Proposition~\ref{prop:universal}). 
It is noteworthy that these critical exponents do not vary for different random-cluster models despite the fact that these models belong to different universality classes.
\item[The four arm exponent is strictly smaller than 2:] 
We obtain a lower bound on the probability of four arms between scales (see Proposition~\ref{prop:four arm} for a precise statement).
This is a consequence of the value of the five arm exponent discussed above, and the strict monotonicity of arm exponents, which in turn follows from Theorem~\ref{thm:sRSW}.
Bounds on the four arm exponent have important consequences for the geometry of interfaces. 
In particular, they may be used to prove the existence of polynomially many pivotals.

When $1\le q\le 3$, we even prove a quantitative lower bound on the four arm exponent  (Proposition~\ref{prop:beta}) which is of interest when trying to prove the existence of exceptional times for the Glauber dynamics.  

\item[New bounds on the one-arm half-plane exponent:] We prove new bounds on the half plane one arm exponent when boundary conditions are free. More precisely, we show that when $q<2$ (resp.~$2<q\le 4$), this exponent is strictly smaller (resp.~larger) than $1/2$. This will be used in subsequent papers to study the effect of a defect line in the random-cluster model, and the order of the phase transition.

\item[The six-arm exponent is strictly larger than 2:] 
Another consequence of the universal value of the five arm exponent and of the strict monotonicity of arm exponents is 
an upper bound on the probability of having six alternating arms (Corollary~\ref{cor:six arm}). We mentioned it since it is very useful when studying percolation models at criticality, in particular when studying the Schramm-Smirnov topology \cite{SchSmi11} (see the detailed discussion in Section~\ref{sec:6}).

\item[New bounds for the one, two and four-arm exponents:] 
A byproduct of our proof is the following family of surprising bounds.
For $1\le q\le2$, the one-arm, two-arm and four-arm exponents can be rigorously bounded from above by $1/4$, $1/2$ and $3/2$, respectively, thus improving on the existing bounds, even in the case of Bernoulli percolation. For Bernoulli percolation, these bounds can be further improved to $1/6$, $1/3$, and $4/3$ (to be compared with the conjectured values $5/48$, $1/4$, and $5/4$). We refer to Section~\ref{sec:perco} for details.
\item[Scaling relations:] 
The existence of pivotals mentioned above is an important ingredient of the proof~\cite{DumMan20} of scaling relations connecting the different critical exponents of the random-cluster model.
\end{description}

\subsection{Idea of the proof of the main theorem}\label{sec:idea}

The starting point is the crossing estimates obtained for every $1\le q\le 4$ in \cite{DumSidTas16}. These estimates can be written under different forms. Here, we choose the following one. Write $\La_n$ for the domain  spanned by the vertex-set $\{-n,\dots, n\}^2$, and $\La_n(x)$ for its translate by $x \in \bbZ^2$. 
For a box $B:=\Lambda_r(x)$, let $\overline B:=\Lambda_{2r}(x)$ be the twice bigger box and $\Circ_B$ be the event that there exists a circuit in $\omega$ surrounding $B$ and contained in $\overline B$. The main theorem of \cite{DumSidTas16} (together with Proposition~5 in the same paper) implies the existence, for every $1\le q\le 4$, of $c_{\rm cir}>0$ such that for every domain $\calD$ and every $B$ with $\overline B\subset \calD$,
\begin{equation}\label{eq:RSW}
\phi_\calD^0[\Circ_B]\ge c_{\rm cir}.
\end{equation}
Note that the previous estimate is valid also for $q=4$ (unlike our main result), 
and that it does not require the existence of a macroscopic cluster touching the boundary. 
In fact, the main difficulty of our result consists in proving the existence of large clusters touching possibly fractal boundaries with free boundary conditions. Indeed, a statement similar to that of Theorem~\ref{thm:RSWquads} may be deduced directly from \cite{DumSidTas16} if the measure $\phi_\calD^\xi$ is replaced by the measure in a domain which is macroscopically larger than $\calD$ (see Section~\ref{sec:3.3}).

General considerations on the extremal distance together with~\eqref{eq:RSW} reduce Theorem~\ref{thm:sRSW}  to the following proposition, which will therefore be the focus of our attention. Call a domain $\calD$ $R$-{\em centred} if $\calD$ contains $\Lambda_{2R}$ but not $\Lambda_{3R}$.

\begin{proposition}\label{prop:crucial_exist}
	For $1\le q<4$, there exists $c_0>0$ such that for every $R\geq 1$ and any $R$-centred domain $\calD$, 	
	\begin{align}\label{eq:crucial_exist}
		\phi_\calD^0 [\Lambda_R\xleftrightarrow{\Lambda_{9R}}  \partial\calD]\ge  c_0.
	\end{align}
\end{proposition}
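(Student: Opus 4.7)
My strategy is to combine the free--boundary circuit estimate~\eqref{eq:RSW} with RSW crossings of rectangles from~\cite{DumSidTas16}, exploiting the topological constraint that $\partial\calD$ is a simple loop forced to penetrate $\Lambda_{3R}$. Since $\calD$ is $R$-centred, there is a vertex $w\in V(\Lambda_{3R})\setminus V(\calD)$; because $\Lambda_{2R}\subset\calD$, this $w$ lies in $\Lambda_{3R}\setminus\Lambda_{2R}$, so the simple loop $\partial\calD$ must cross the annulus $\Lambda_{3R}\setminus\Lambda_{2R}$. Using the four-fold symmetry of $\bbZ^2$, I may assume $\partial\calD$ enters the right quarter $[2R,3R]\times[-3R,3R]$, and so contains an arc $\alpha\subset\partial\calD\cap\Lambda_{4R}$ whose presence will anchor the construction.

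\textbf{Construction.} I would consider two increasing events. The first is the existence of a primal circuit around $\Lambda_R$ inside $\Lambda_{2R}\setminus\Lambda_R$: since $\overline{\Lambda_R}=\Lambda_{2R}\subset\calD$, applying~\eqref{eq:RSW} with $B=\Lambda_R$ gives a $\phi_\calD^0$-probability of at least $c_{\rm cir}$. The second is a primal crossing of a rectangular corridor $\calR\subset\Lambda_{9R}$ of bounded aspect ratio, chosen so that any such crossing links $\partial\Lambda_{2R}$ to $\alpha$. By stochastic monotonicity (free BC are smallest in the FKG order, via the domain Markov property), the probability of the corridor crossing under $\phi_\calD^0$ dominates the corresponding probability under $\phi_\calR^0$, which is in turn bounded below by the rectangle RSW of~\cite{DumSidTas16}. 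Applying the FKG inequality to these two increasing events shows that both occur simultaneously with $\phi_\calD^0$-probability at least some positive constant $c_0$ independent of $\calD$. On this intersection, the circuit sits in the cluster of $\Lambda_R$ and is met by the corridor crossing, which in turn reaches $\alpha\subset\partial\calD$, yielding the desired connection inside $\Lambda_{9R}$.

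\textbf{Main obstacle.} The delicate point is choosing the corridor $\calR$ so that every left--right crossing genuinely meets $\partial\calD$, rather than merely exiting through some unrelated vertex on the right side of $\calR$. If $\alpha$ is fractal, or occupies only a thin portion of $\calR$, a horizontal crossing could skirt it entirely. The natural remedy, and where I expect the technical content of the proof to lie, is an exploration argument: peel off the outermost portion of $\partial\calD\cap\Lambda_{3R}$ from outside $\calD$, condition on the revealed arc, and apply~\eqref{eq:RSW} in the subdomain uncovered by the exploration, invoking the domain Markov property to ensure that the induced boundary conditions on the unexplored side remain free. The simple-loop character of $\partial\calD$ is crucial here: it forces the revealed arc to be a connected curve of macroscopic extent that any corridor crossing is topologically obliged to traverse, converting the geometric obstruction into a controllable crossing event.
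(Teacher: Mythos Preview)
Your proposal has a genuine and fatal gap, one that the paper explicitly identifies as the central difficulty. The ``corridor crossing'' step is circular: if $\calR\subset\calD$, then an open crossing of $\calR$ lies entirely inside $\calD$ and need not touch $\partial\calD$ at all; if instead $\calR$ straddles $\partial\calD$ so that $\alpha$ separates the two short sides of $\calR$, then there is no left--right crossing of $\calR$ inside $\calD$, and what you actually need is an open path in $\calR\cap\calD$ from $\partial\Lambda_{2R}$ to the fractal arc $\alpha\subset\partial\calD$ under free boundary conditions on $\alpha$ --- precisely the statement you are trying to prove. Your stochastic comparison with $\phi_\calR^0$ does not help, because under $\phi_\calR^0$ a crossing of the full rectangle $\calR$ has no reason to pass through the set $\alpha$. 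The exploration idea also does not make sense as stated: $\partial\calD$ is a deterministic curve, not a random interface, so there is nothing to reveal; and any exploration of $\omega$ inside $\calD$ still leaves you with the task of connecting to a free boundary.

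There is a conceptual reason your route cannot succeed: the input~\eqref{eq:RSW} holds for all $1\le q\le 4$, yet the conclusion of Proposition~\ref{prop:crucial_exist} is expected to \emph{fail} at $q=4$ (see Section~\ref{sec:q=4}). Hence no argument using only~\eqref{eq:RSW} and soft tools (FKG, monotonicity, topology) can work; some input specific to $q<4$ is needed. The paper supplies this via the parafermionic observable, which yields a first--moment lower bound $M(r,R)\ge c_1(R/r)^{c_1}$ on the expected number of boundary $r$-boxes connected to $\Lambda_R$ (Propositions~\ref{prop:fund} and~\ref{prop:10}). This is then fed into a multiscale renormalization inequality $p(R)\ge c_2 M(r,R)\min\{p(r),(r/R)^2\}$ (Proposition~\ref{prop:renormalization}), and the two together bootstrap $p(R)$ to a uniform positive lower bound. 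The observable step is where the restriction $q<4$ enters, through the strict inequality $\cos[(1+\sigma)\pi/4]>0$.
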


The proof of Proposition~\ref{prop:crucial_exist} is the core of the argument. Historically, results on crossing estimates are based on three different techniques. 
First, for $q=1$ or for general $q$ but specific boundary conditions, one may prove that crossing probabilities in squares are bounded away from $0$ using self-duality. Then, probabilistic arguments involving the FKG inequality enable one to extend these estimates to rectangles, see e.g.~\cite{Rus78,SeyWel78,BefDum12}. The use of self-duality relies on symmetries of the domain and of the boundary conditions, and is therefore inefficient for general quads or boundary conditions. 
A second technique based on renormalization arguments was implemented in \cite{DumSidTas16,DumTas18} for arbitrary boundary conditions but only for rectangles, and was used to prove~\eqref{eq:RSW}. While this technique treats arbitrary boundary conditions, it only applies when the boundary is at a macroscopic distance from the domain to be crossed.  
A third technique, which allows one  to prove that crossings touch the boundary, relies on the second moment method for the number of boundary vertices connected to a given set. This strategy works well when $\calD$ has a flat boundary, and was indeed used in \cite{DumSidTas16} to show crossing estimates for rectangles with free boundary conditions (see~\eqref{eq:wRSW} below), but does not extend to general quads. 
Indeed, except in the special case of the random-cluster model with $q=2$ \cite{DumHonNol11,CheDumHon13}, up-to-constant estimates on connection probabilities for vertices on the boundary are not available for general boundaries. Thus, the second moment method, as described above, becomes essentially impossible to implement.
 
The strategy used here to prove Proposition~\ref{prop:crucial_exist} will be  different than all of the above. 
It contains two different parts, and may be viewed as a combination of a first moment estimate and renormalization methods. Indeed, we start with a (sub-optimal) polynomial first moment estimate, then use a renormalization procedure to replace the second moment estimate and prove the existence of points on $\partial \calD$ connected to $\La_R$ with positive probability. 
 
For $r \geq 0$, call {\em $r$-box} any translate of $\Lambda_r$ by a vertex $x$ in $(1\vee r)\mathbb Z^2$. Notice that  a $0$-box is the same as a vertex of $\mathbb Z^d$. Consider $R\geq 1$ and a $R$-centred domain $\calD$, and let $\mathbf M_r(\calD,R)$ be the number of $r$-boxes intersecting $\partial \mathcal  D$ 
that are connected to $\Lambda_R$ in $\calD \cap \La_{7R}$ (the difference between the factors $7R$ here and $9R$ in Proposition~\ref{prop:crucial_exist} appears for technical reasons). 
In particular, $\mathbf M_0(\mathcal D,R)$ counts the number of vertices on $\partial \mathcal D$ that are connected to $\Lambda_R$ in $\mathcal D\cap \Lambda_{7R}$.  
Hence, our goal is akin to showing that there exists a uniform constant $c>0$ such that
\begin{equation}\label{eq:3}
  \phi_{\mathcal D}^0[\mathbf M_0(\mathcal D,R)\ge 1]\ge c
\end{equation}
for every $R \geq 1$ and every $R$-centred domain $\mathcal D$.  

As already mentioned, the first step towards Proposition~\ref{prop:crucial_exist} 
is to lower-bound the first moment of $\mathbf M_r(\calD,R)$, which is the object of the following proposition. Introduce 
\begin{equation}
	M(r,R):=\inf\{\phi_{\calD}^0[\mathbf M_{r}(\calD,R)]:\calD\text{ $R$-centred}\}.
\end{equation}

\begin{proposition}[non-sharp scale-to-scale lower bound on first moment]\label{prop:fund}
  For $1\le q< 4$, there exists $c_1>0$ such that for every $R\ge r\ge1$,
  \begin{align}\label{eq:fund1}
    M(r,R)\ge c_1(R/r)^{c_1}.
  \end{align}
\end{proposition}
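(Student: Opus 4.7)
I plan to prove the proposition by induction on the scale ratio $R/r$, with \eqref{eq:RSW} as the main input. The strategy rests on a scale-to-scale inequality $M(r, R) \geq \lambda\, M(\rho r, R)$ with $\lambda > 1$ for a suitably large universal constant $\rho$, combined with a base case $M(R, R) \geq c_0 > 0$. Iterating $\log_\rho(R/r)$ times yields $M(r, R) \geq c_0 (R/r)^{\log_\rho \lambda}$, which is the desired polynomial bound (with $c_1=\min(c_0,\log_\rho \lambda)$).

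For the base case, I would fix an $R$-centred $\calD$, pick an $R$-box $B_0$ intersecting $\partial\calD$ inside $\Lambda_{3R}$ (which exists since $\calD\not\supset \Lambda_{3R}$), and apply \eqref{eq:RSW} to a bounded chain of nested boxes linking $\Lambda_R$ to $B_0$, combining the resulting circuits via FKG. This yields $\phi^0_\calD[B_0 \xleftrightarrow{\calD \cap \Lambda_{7R}} \Lambda_R] \geq c_0$, whence $M(R,R) \geq c_0$.

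The scale-to-scale inequality rests on two ingredients. First, a \emph{topological estimate}: inside $\Lambda_{7R}$, the loop $\gamma$ bounding $\calD$ has total length at least of order $R$ (since it must surround $\Lambda_{2R}$ and at the same time dip into $\Lambda_{3R}$), so the number of $r$-boxes intersecting $\partial\calD\cap\Lambda_{7R}$ exceeds the number of $\rho r$-boxes there by a factor of order $\rho$. Second, a \emph{conditional connection estimate}: for each $\rho r$-box $B$ on $\partial\calD$, conditional on $\{B \xleftrightarrow{\calD \cap \Lambda_{7R}} \Lambda_R\}$, each $r$-box $B' \subset \overline B$ intersecting $\partial\calD$ is also connected to $\Lambda_R$ with conditional probability at least $c'$. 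For the latter, I would explore the configuration outside a suitable enlargement of $B$, apply FKG to dominate the conditional measure inside from below by the free-BC measure on the remaining subdomain, and invoke \eqref{eq:RSW} inside $\overline B$ to link $B'$ to the entry arc of the path realising the connection from $B$ to $\Lambda_R$. Combining the two ingredients and taking into account the bounded overcounting (each $r$-box lies in only $O(1)$ enlarged $\rho r$-boxes) gives $M(r, R) \geq (c'\rho/C)\, M(\rho r, R)$, so $\lambda := c'\rho/C > 1$ once $\rho$ is chosen large enough.

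The hard part will be coordinating the topological and probabilistic estimates. The per-box count $N(B):=|\{r\text{-boxes on }\partial\calD\text{ in }\overline B\}|$ can vary from $1$ (when $\gamma$ just grazes $B$) up to order $\rho^2$ (when $\gamma$ fills $\overline B$), so the required weighted inequality $\sum_B N(B)\phi^0_\calD[B\text{ conn.}] \geq c\rho\sum_B \phi^0_\calD[B\text{ conn.}]$ does not follow from a purely per-box comparison. An aggregate argument is needed, plausibly splitting $\rho r$-boxes into \emph{crossing} type (where $\gamma\cap\overline B$ has macroscopic diameter and $N(B)\gtrsim\rho$) and \emph{grazing} type, and showing that most of the mass in $\sum_B\phi^0_\calD[B\text{ conn.}]$ is carried by crossing boxes. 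This is also where the free boundary conditions enter in an essential way via FKG and \eqref{eq:RSW}, and it is the step I expect to be the main technical obstacle.
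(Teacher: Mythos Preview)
Your proposal has a fundamental obstruction: it uses only the annulus RSW estimate \eqref{eq:RSW} together with FKG and the spatial Markov property, all of which hold for every $q\in[1,4]$, including $q=4$. But as discussed in Section~\ref{sec:q=4}, the bound \eqref{eq:fund1} is expected to \emph{fail} at $q=4$ (since $\pi_1^+(R)$ should decay like $1/R$ there). An argument built only on inputs valid at $q=4$ would prove a statement believed to be false at $q=4$, so some genuinely $q$-sensitive ingredient is mandatory. The paper itself stresses this: the remark after \eqref{eq:4} explains why a pure RSW gluing gives only $(R/r)^{d-C}$ with no control on $C$.

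Concretely, the step you flag as the ``main technical obstacle'' is where the argument collapses. Your conditional connection estimate asks that, under free boundary conditions inside $\overline B\cap\calD$, an $r$-box $B'$ on $\partial\calD$ be reachable from the entry arc with probability $c'$. With only \eqref{eq:RSW} available (circuits in boxes $B_0$ with $\overline{B_0}\subset\calD$), connecting across $\overline B$ to an $r$-box on the boundary typically costs a chain of $O(\rho)$ circuits at scale $r$, giving $c'$ of order $c_{\rm cir}^{O(\rho)}$; then $c'N(B)\lesssim \rho\,c_{\rm cir}^{O(\rho)}\to 0$, not $>1$. The aggregate splitting into crossing/grazing boxes does not rescue this, since even for crossing boxes the cost of reaching a \emph{specific} boundary $r$-box through a possibly fractal $\overline B\cap\calD$ is not uniformly controlled.

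The paper's proof is completely different and brings in the parafermionic observable, which is precisely where $q<4$ enters (through $\sigma<1$ and the positivity of $\cos[(1+\sigma)\tfrac\pi4]$ in Lemma~\ref{lem:op}). Proposition~\ref{prop:10} yields $M(r,R)\ge c_4\,R\pi_1^+(R)/\big(r\pi_1^+(r)\big)$ by comparing, via the observable's contour identity, connections to the fractal free arc with connections to a \emph{flat} wired arc in a corner Dobrushin domain. This is then combined with Lemma~\ref{lem:mn}, itself a consequence of the flat-boundary rectangle estimate \eqref{eq:wRSW} (only valid for $q<4$), to get $\pi_1^+(R)/\pi_1^+(r)\ge c_3(r/R)^{1-c_3}$, and hence \eqref{eq:fund1}. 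In short, the needed gain over naive gluing comes from the observable and \eqref{eq:wRSW}, not from any refinement of \eqref{eq:RSW}.
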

Let us make some remarks about this result. First, we would like to emphasise that it is non-trivial, in the sense that it does not follow directly from the RSW estimates. In order to put the proposition above into perspective, let us give the estimate that we obtain if one uses only the RSW result~\eqref{eq:RSW} to estimate $\mathbf M_r(\mathcal D,R)$. By a standard scale-to-scale gluing procedure, the RSW result~\eqref{eq:RSW} gives a lower bound of $(r/R)^C$ on the probability that a $r$-box intersecting the boundary of $\mathcal D$ is connected to $\Lambda_R$, where $C>0$ is a positive constant on which we have almost no control (it is a priori very large). Since the total number of $r$-boxes intersecting the boundary of $\mathcal D$ is of order $(R/r)^d$ for some $d\in [1,2]$, we would obtain an estimate of the form
\begin{equation}
  \label{eq:4}
  \phi_{\calD}^0[\mathbf M_{r}(\calD,R)]\gtrsim \left(\frac R r\right)^{d-C}.
\end{equation}
This lower bound does not  establish the proposition above, due to the lack of control on~$C$. Another way to see that there is something subtle in the proposition above is explained in the next section: we expect that the estimate~\eqref{eq:fund1} does not hold for $q=4$ (even if the RSW-result ~\eqref{eq:RSW} does).

A second remark is that this estimate is non-sharp, in the following sense. 
For a fixed fractal domain $\mathcal D$, the expectation $\phi_{\calD}^0[\mathbf M_{r}(\calD,R)]$ is thought to behave like $(R/r)^{\eta+o(1)}$ for some $\eta>0$, but a priori the constant $c_1$ in \eqref{eq:fund1} is smaller than $\eta$. In particular, the second moment method cannot be used to deduce~\eqref{eq:3} from the first moment estimate. Instead, we use a  new renormalization technique, inspired from the theory of branching processes, and involving the following quantity:
\begin{equation}\label{eq:p(R)}
p(R):=\inf\{\phi_\calD^0 [\Lambda_R\xleftrightarrow{\Lambda_{9R}}  \partial\calD]: \calD\text{ $R$-centred}\}.
\end{equation}

\begin{proposition}[renormalization]\label{prop:renormalization}
For $1\le q\le 4$, there exists $c_2>0$ such that for every $R/20\ge r\ge 1$,
\begin{equation}\label{eq:recursive}
p(R)\ge c_2M(r,R)\min\{p(r),(\tfrac rR)^2\}.
\end{equation}
\end{proposition}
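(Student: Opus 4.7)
Fix an $R$-centred domain $\calD$ and $1\le r\le R/20$. The plan is to use Proposition~\ref{prop:fund} to produce many candidate $r$-boxes along $\partial\calD$ that are connected to $\Lambda_R$, and then, for each candidate, to locally extend the connection to reach $\partial\calD$ itself with probability at least $c\,p(r)$. Let $\calB$ denote the set of $r$-boxes $B$ with centre $x_B$ such that $B\cap\partial\calD\neq\emptyset$ and $\Lambda_{9r}(x_B)\subset\Lambda_{9R}$, and set
\ben
  Z_B:=\ind\{B\xleftrightarrow{\calD\cap\Lambda_{7R}}\Lambda_R\},\qquad F_B:=\ind\{B\xleftrightarrow{\Lambda_{9r}(x_B)\cap\calD}\partial\calD\}.
\een
Then $\{Z_B F_B=1\}\subset\{\Lambda_R\xleftrightarrow{\Lambda_{9R}}\partial\calD\}$, and $\phi_\calD^0[\sum_B Z_B]\ge M(r,R)$ by definition of $M$.

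\textbf{First moment of $\sum_B Z_B F_B$.} For each $B\in\calB$, I expose the cluster of $\Lambda_R$ from outside $\Lambda_{9r}(x_B)$ inward, stopping the first time it touches $\Lambda_{9r}(x_B)$; this exploration depends only on edges outside $\Lambda_{9r}(x_B)$ and determines $Z_B$. Conditional on $Z_B=1$ and the exploration, the restricted measure on $\Lambda_{9r}(x_B)\cap\calD$ has boundary conditions that dominate (by comparison between boundary conditions) the free ones on an $r$-centred sub-domain obtained after translation, with $\partial\calD\cap\Lambda_{9r}(x_B)$ playing the role of the boundary in the definition of $p(r)$. Since $F_B$ is increasing, its conditional probability is at least $c\,p(r)$, which upon summation yields
\ben
  \phi_\calD^0\Big[\sum_{B\in\calB}Z_B F_B\Big]\ \ge\ c\, p(r)\, M(r,R).
\een

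\textbf{Passing to a probability bound.} Set $X:=\sum_B Z_B F_B$; deterministically $X\le|\calB|\le C(R/r)^2$. A Paley--Zygmund bound
\ben
  \phi_\calD^0[X\ge 1]\ \ge\ \phi_\calD^0[X]^2\big/\phi_\calD^0[X^2]
\een
reduces matters to an upper bound on the second moment $\phi_\calD^0[X^2]=\phi_\calD^0[X]+\sum_{B\ne B'}\phi_\calD^0[Z_BF_BZ_{B'}F_{B'}]$. For distinct $B,B'$ with $\Lambda_{9r}(x_B)\cap\Lambda_{9r}(x_{B'})=\emptyset$, a simultaneous double exploration renders $F_B,F_{B'}$ conditionally independent (each contributing $Cp(r)$), while $\phi_\calD^0[Z_BZ_{B'}=1]$ can be controlled by iterating~\eqref{eq:RSW} to handle two disjoint paths from $\Lambda_R$ towards $\{B,B'\}$. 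Summing these pairwise bounds and balancing against $\phi_\calD^0[X]$ produces
\ben
  \phi_\calD^0[\Lambda_R\xleftrightarrow{\Lambda_{9R}}\partial\calD]\ \ge\ c_2\, M(r,R)\,\min\{p(r),(r/R)^2\},
\een
as required.

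\textbf{Main obstacle.} The delicate step is the control of the off-diagonal pairwise terms in the fractal geometry of $\partial\calD$, and in particular the bound on $\phi_\calD^0[Z_BZ_{B'}=1]$. Off-the-shelf two-arm estimates are unavailable, since $\partial\calD$ may be arbitrarily rough; instead, one must adapt the single-box exploration scheme from the first-moment step to a simultaneous double exploration, using the margin afforded by $r\le R/20$ to decouple the two candidate boxes at the macroscopic scale $R$. This decoupling is what ultimately yields the $\min\{p(r),(r/R)^2\}$ in place of the naive product $p(r)(r/R)^2$ one would obtain from the crude bound $\phi_\calD^0[X\ge 1]\ge \phi_\calD^0[X]/|\calB|$.
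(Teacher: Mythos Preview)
Your proposal contains several genuine gaps that prevent it from being a proof.

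\textbf{The setup is inconsistent.} As written, $F_B$ is trivially equal to $1$: since $B\cap\partial\calD\neq\emptyset$, any vertex $v\in B\cap\partial\calD$ lies in $\Lambda_{9r}(x_B)\cap\calD$ and is connected to itself, so $B\xleftrightarrow{}\partial\calD$ always holds. More importantly, even if $F_B$ is redefined to be nontrivial, the claimed inclusion $\{Z_BF_B=1\}\subset\{\Lambda_R\xleftrightarrow{\Lambda_{9R}}\partial\calD\}$ is false: $Z_B=1$ gives a vertex $u\in B$ connected to $\Lambda_R$, and $F_B=1$ gives a vertex $v\in B$ connected to $\partial\calD$, but nothing forces $u$ and $v$ to lie in the same cluster. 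The paper handles this by working with \emph{seeds} $S$ (boxes $\Lambda_r(x)$ with $\Lambda_{2r}(x)\subset\calD$ but $\Lambda_{3r}(x)\not\subset\calD$, so that the translated domain is genuinely $r$-centred and $p(r)$ applies) and intersecting with the circuit event $\Circ_S$ to glue the two connections. Your boxes $B$ intersect $\partial\calD$ and do not yield $r$-centred subdomains, so neither $p(r)$ nor a circuit argument is directly available.

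\textbf{The second-moment step is not justified.} For $q>1$ there is no BK-type inequality, and FKG gives the wrong direction for the off-diagonal terms: $\phi_\calD^0[Z_BZ_{B'}]\ge\phi_\calD^0[Z_B]\phi_\calD^0[Z_{B'}]$, which is useless as an upper bound. Your claim that a ``simultaneous double exploration renders $F_B,F_{B'}$ conditionally independent'' is also false for random-cluster models: the spatial Markov property only gives that the conditional law in a region depends on the boundary configuration, not that disjoint regions become independent. Controlling $\phi_\calD^0[Z_BZ_{B'}]$ uniformly over fractal $\partial\calD$ is precisely the kind of two-point estimate that is unavailable here; your final paragraph acknowledges this difficulty without resolving it.

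\textbf{How the paper proceeds instead.} The paper avoids second moments entirely. It introduces an inner domain $\calD_r\subset\calD$ at distance $\ge 10r$ from $\partial\calD$ and conditions on the full configuration $\xi$ in $\calD_r$. A seed $S$ is called \emph{activated} if, given $\xi$, the conditional probability of $S\xleftrightarrow{}\Lambda_R$ exceeds a fixed constant $c_\square$; a comparison argument (Lemma~\ref{lem:10}) shows that the expected number of activated seeds is at least a constant times $M(r,R)$. Given $\xi$, one selects a family of activated seeds with pairwise disjoint local domains $\calD_S$; by FKG and comparison with free boundary conditions on $\calD_S\setminus\calD_r$, each satisfies $\Circ_S\cap E_S\cap F_S$ with conditional probability at least $c\,p(r)$. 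The probability that at least one succeeds is then bounded below via the elementary inequality $1-(1-x)^n\ge nx/e$ for $nx\le 1$, which is exactly where the truncation $\min\{p(r),(r/R)^2\}$ enters (since $n\le C(R/r)^2$ deterministically). Averaging over $\xi$ yields the result. The key point is that conditioning on $\xi$ replaces the need for any second-moment or decoupling estimate.
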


Once we have established the Propositions~\ref{prop:fund} and~\ref{prop:renormalization}, one can easily conclude the proof of Proposition~\ref{prop:crucial_exist} as follows: 

\begin{proof}[Proposition~\ref{prop:crucial_exist}] 
	Choose a constant $\lambda \geq 20$ large enough that $c_1c_2\lambda^{c_1}\geq 1$. 
	Then~\eqref{eq:fund1} and~\eqref{eq:recursive} applied with $\lambda r\le R \le \lambda^2 r$ imply  
	$$p(R) \geq \min\{\lambda^{-4}, p(r)\}.$$
	Consider the sequence $u_n:=\min\{p(R):\lambda^n\le R< \lambda^{n+1}\}$. 
	By applying the equation above to $r=\lambda^n$, we have  
	$$u_{n+1} \geq \min\{\lambda^{-4}, p(\lambda^n)\}\ge \min\{\lambda^{-4}, u_n\},$$ 
	which implies  that $u_n\geq  \min(\lambda^{-4},u_0)$ for every $n\ge 0$ by induction. 
Since $u_0>0$ (by the finite energy property), we conclude that $\inf \{p_R: R\geq 1\} > 0,$ which corresponds to the statement of the proposition. 
 \end{proof}

\begin{remark}
We will see in Proposition~\ref{prop:polynomially many} that we can prove an even stronger result, namely that with probability bounded by a universal strictly positive constant, in every $R$-centred domain $\calD$, $\Lambda_R$ is connected to polynomially many points of $\partial\calD$.
\end{remark}

\subsection{Origin of first moment bound and why is $q=4$ excluded}\label{sec:q=4}

The careful reader will have noticed that~\eqref{eq:RSW} and Proposition~\ref{prop:renormalization} are valid for every $1\le q\le 4$, while Proposition~\ref{prop:fund} and Theorem~\ref{thm:sRSW} require $q<4$ additionally. 
At this stage, it is useful to explain why $q=4$ is excluded from the latter two statements. 

Write $\phi_{\mathbb H}^0$ for the infinite-volume measure\footnote{The measure is obtained as the weak limit (as $R\rightarrow\infty$) of the measures $\phi_{[-R,R] \times [0,R]}^0$.} in the half-plane $\mathbb H:=\mathbb Z\times\mathbb Z_+$
and set 
\begin{equation}
 \pi_1^+(R):=\phi_{\bbH}^0[0\longleftrightarrow\partial\Lambda_R].
\end{equation}
The proof of Proposition~\ref{prop:fund} will crucially rely on the following lemma. 

\begin{lemma}\label{lem:mn}
    For $1\le q<4$, there exists $c_3>0$ such that for every $R\ge r\ge1$, 
    \begin{align*}
    	\pi_1^+(R)\ge c_3(r/R)^{1-c_3}\pi_1^+(r).
	\end{align*}
\end{lemma}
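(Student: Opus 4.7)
The lemma is a scale-to-scale statement equivalent, by a dyadic iteration, to an \emph{improved doubling} estimate of the form
\[
\pi_1^+(2R) \;\ge\; c_0\,\pi_1^+(R), \qquad R\ge 1,
\]
with a constant $c_0 > 1/2$. Indeed, iterating such an estimate between scales $r$ and $R = 2^k r$, taking $k = \lceil\log_2(R/r)\rceil$, and using the monotonicity $\pi_1^+(R)\ge \pi_1^+(2^k r)$, yields $\pi_1^+(R) \ge c_0\,(r/R)^{\log_2(1/c_0)}\pi_1^+(r)$, matching the lemma with $c_3 := \min\!\big(c_0,\,1-\log_2(1/c_0)\big) > 0$. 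Since the naive half-plane RSW derived directly from \eqref{eq:RSW} only produces some unspecified $c_0 > 0$, the entire content lies in the strict inequality $c_0 > 1/2$.

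To produce this strict improvement, my plan is to exploit the universal upper bound on the half-plane two-arm probability: for every $q\in[1,4]$,
\[
\pi_2^+(R) := \phi^0_{\bbH}\big[0 \text{ has two disjoint open arms to } \partial\Lambda_R\big] \;\le\; C/R,\qquad R\ge 1.
\]
This bound follows from a classical BK-type exchange argument that I would carry out as a preliminary step: summing the event ``$x$ has two disjoint arms to $\partial\Lambda_R(x)$'' over boundary vertices $x\in[-R,R]\times\{0\}$, one uses that simultaneous such events at different boundary vertices are incompatible up to constant factors, so $(2R+1)\pi_2^+(R)\le C$. The input is only \eqref{eq:RSW} and FKG.

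I would then condition on $A_R := \{0\leftrightarrow\partial\Lambda_R \text{ in }\bbH\}$ and analyse $\phi^0_{\bbH}[A_{2R}\mid A_R]$ via the spatial Markov property applied along the leftmost realizing open path $\gamma$ of $A_R$. Conditionally on $\gamma$, the configuration to the right of $\gamma$ is a random-cluster measure with partially wired boundary conditions, and the extension event $A_{2R}$ becomes a half-annulus crossing whose probability is bounded below by a constant by~\eqref{eq:RSW}. The strict improvement beyond $1/2$ comes by bounding the failure event $A_R \setminus A_{2R}$: when the cluster of $0$ reaches $\partial\Lambda_R$ but gets ``trapped'' there, a summation over boundary translates produces a two-arm configuration at scale $R$ whose rarity $\pi_2^+(R)=O(1/R)$ keeps the conditional failure rate strictly below $1/2$.

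\textbf{Main obstacle.} The hardest part is making this ``two-arm rarity'' argument quantitative enough to yield a uniform positive gap over $1/2$. For $q=4$, the expected universal value $\alpha_1^+=1$ would force the improved doubling constant to degenerate to exactly $1/2$, so the hypothesis $q<4$ must enter precisely at this quantitative step. I anticipate this will require an argument in the spirit of the classical strict-inequality arguments of Aizenman and Grimmett, combining FKG, the spatial Markov property, and the universal two-arm bound, and leveraging the non-degeneracy of the critical random-cluster measure for $q<4$.
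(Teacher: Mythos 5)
Your reduction of the lemma to an improved doubling estimate $\pi_1^+(2R)\ge c_0\,\pi_1^+(R)$ with a constant $c_0>1/2$ is a correct reformulation. The difficulty, as you say, is entirely in producing the strict gap above $1/2$, and both steps you propose for it break down.

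First, the preliminary bound $\pi_2^+(R)\le C/R$ for \emph{two disjoint open arms} is not established by the exchange argument you describe. The events ``$x$ has two disjoint open arms to $\partial\La_R(x)$'' at distinct boundary vertices $x$ are not incompatible: a single cluster touching $\partial\bbH$ along an interval of length $\ge 2$ and reaching scale $R$ realises this event simultaneously for all $x$ in that interval, so the sum over $x$ is not bounded. The incompatibility you have in mind is a feature of the \emph{alternating} two-arm event $A^+_{10}$, where a dual arm between two candidate points forces separation; it does not hold monochromatically. Second, even granting the bound $\pi_2^+(R)=O(1/R)$, the argument that it forces $\phi^0_\bbH[A_{2R}^c \mid A_R]<1/2$ is not given: the configuration on $A_R\setminus A_{2R}$, in which the cluster of the origin is trapped between scales $R$ and $2R$, is not obviously dominated by a two-open-arm event summed over boundary translates, and you explicitly flag this step as the part you have not carried out. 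This is the entire content of the lemma, so it constitutes a genuine gap rather than a routine verification.

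For comparison, the paper proves the lemma by a different chain, all internal to the half-plane and using only the free-boundary crossing estimate~\eqref{eq:wRSW}. One first reduces $\pi_1^+(R)/\pi_1^+(r)$ to the scale-to-scale probability $\phi^0_\bbH[A^+_1(r,R)]$ by gluing with half-plane circuits. Then one lower-bounds the \emph{alternating} two-arm event via a union bound over boundary translates, $\phi^0_\bbH[A^+_{10}(r,R)]\ge c\, r/R$, not from incompatibility but from existence (a primal vertical crossing of $[0,R/2]\times[0,R]$ together with a dual one of $[-R/2,0]\times[0,R]$ forces $F_s$ for some $s$). An FKG inequality for an increasing and a decreasing event factorises $\phi^0_\bbH[A^+_{10}]\le\phi^0_\bbH[A^+_1]\phi^0_\bbH[A^+_0]$. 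Finally, a strict upper bound $\phi^0_\bbH[A^+_0(r,R)]\le(1-c)^{\lfloor\log(R/2r)\rfloor}\le (r/R)^{c'}$ is obtained from blocking primal half-annulus circuits touching $\partial\bbH$, each of uniformly positive probability by~\eqref{eq:wRSW}. This last step is precisely where $q<4$ enters, since~\eqref{eq:wRSW} fails for $q=4$. Your spatial-Markov step around the leftmost path would also require~\eqref{eq:wRSW}, since the extension crossing must touch the free boundary; but without some structural input like the alternating two-arm lower bound combined with the dual-arm upper bound, there is no visible mechanism to beat the trivial factor of $1/2$.
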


This lemma is a simple application (see Section~\ref{sec:5.1a}) of the following result from \cite{DumSidTas16}: 
For every $1\le q<4$ and $\rho>0$, there exists $c_{\rm bound}=c_{\rm bound}(\rho)>0$ such that 
\begin{equation}\label{eq:wRSW}
	\phi_{\calR}^0[(ab)\longleftrightarrow(cd)]>c_{\rm bound}
\end{equation}
for every rectangle $\calR:=[0,\rho R]\times[0,R]$ of aspect ratio $\rho$, where $a,b,c,d$ are the four corners of the rectangle, indexed in counter-clockwise order, starting from the bottom-right corner.
Since Lemma~\ref{lem:mn} is our starting point, we can summarise the innovation in this paper as follows:
we start from crossing estimates for domains with flat boundaries and extend these estimates to fractal domains. 

When $q=4$, we expect the scaling limit of the critical random-cluster model to be described by CLE(4). 
The probability that a macroscopic loop of CLE(4) comes within distance $\ep$ of a point on a flat boundary is of order $\ep$. 
We therefore expect that $\pi_1^+(R)$ decays like $1/R$ when $q=4$, which contradicts the conclusion of Lemma~\ref{lem:mn}.
As a consequence, the probabilities of crossing rectangles in~\eqref{eq:wRSW} are expected to tend to $0$ as $N$ increases. 
Thus, Theorem~\ref{thm:sRSW} should be {\em wrong} for $q=4$, even in the special case of flat boundaries. 
We take this opportunity to state the following question.
\begin{question}
Show that for $q=4$, $\pi_1^+(R)$ decays (up to multiplicative constants) like $1/R$, and
 probabilities  in~\eqref{eq:wRSW}  tend to 0 as $R$ tends to infinity.
\end{question}

To conclude, let us mention that Proposition~\ref{prop:fund} will be a direct consequence of the previous lemma together with the following result.

\begin{proposition}\label{prop:10}
	For $1\le q< 4$, there exists $c_4>0$ such that for every $R\ge r\ge0$,
	\begin{align}\label{eq:fund}
		M(r,R) \ge c_4\frac{R\pi_1^+(R)}{1\vee r\pi_1^+(r)}.
	\end{align}
\end{proposition}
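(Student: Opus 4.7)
The plan splits into (i) a reduction to the vertex-level bound $M(0,R)\gtrsim R\pi_1^+(R)$ via a local first-moment estimate at scale $r$, and (ii) the proof of that bound by constructing $\gtrsim R$ distinguished boundary vertices and invoking~\eqref{eq:wRSW} together with FKG.

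For the reduction, decompose $\mathbf M_0(\calD,R)=\sum_B N_B$, where $B$ ranges over $r$-boxes intersecting $\partial\calD$ and $N_B:=|\{v\in B\cap\partial\calD: v\xleftrightarrow{\calD\cap\Lambda_{7R}}\Lambda_R\}|$, so that $\mathbf M_r(\calD,R)=\sum_B\mathbbm 1_{\{N_B\ge 1\}}$. It then suffices to prove the conditional first-moment estimate $\phi_\calD^0[N_B\mid N_B\ge 1]\le C(1\vee r\pi_1^+(r))$ uniformly in $B$. I would obtain this by localising to the doubled box $2B$: conditional on $N_B\ge 1$ and on the environment outside $2B$, the vertices contributing to $N_B$ lie on a cluster of $\omega$ restricted to $\calD\cap 2B$ that reaches $\partial(2B)$, so their number is dominated by a half-plane arm count of expected size $\asymp r\pi_1^+(r)$, established via~\eqref{eq:wRSW} and translation invariance along the flat side. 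Summing over $B$ yields $\phi_\calD^0[\mathbf M_0]\le C(1\vee r\pi_1^+(r))\,\phi_\calD^0[\mathbf M_r]$, reducing the proposition to the case $r=0$.

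For the vertex-level bound, I would use the $R$-centredness of $\calD$ and the lattice symmetries to assume that the top side of $\Lambda_{3R}$ is not entirely inside $\calD$. For each $i\in\{-R,\ldots,R\}$, the vertical ray starting at $(i,2R)$ and going upward stays in $\calD$ until it first meets $\partial\calD$, yielding a distinct boundary vertex $x_i=(i,h_i)\in\partial\calD$ with $h_i\in[2R,7R]$ for at least $\gtrsim R$ indices $i$. Via FKG, combine:
\begin{itemize}
\item the free-BC horizontal crossing at height $R$ in $\Lambda_{2R}$ given by~\eqref{eq:wRSW}, which has probability $\ge c$ and hits the vertical line through $i$;
\item a vertical one-arm from $(i,2R)$ to $x_i$ inside $\calD\cap(\mathbb Z\times[2R,h_i])$. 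By the downward monotonicity $\phi_\calD^0\ge_{\mathrm{st}}\phi_{\calD'}^0$ applied to $\calD'=\calD\cap(\mathbb Z\times[2R,h_i])\subset\calD$ and a further application of~\eqref{eq:wRSW}, this probability is bounded below by the half-plane one-arm $\pi_1^+(h_i-2R)$, which is $\ge c\pi_1^+(R)$ by Lemma~\ref{lem:mn} since $h_i-2R\le 5R$.
\end{itemize}
The FKG inequality then gives $\phi_\calD^0[x_i\leftrightarrow\Lambda_R]\ge c\pi_1^+(R)$, and summing over the $\gtrsim R$ valid $i$'s yields $\phi_\calD^0[\mathbf M_0(\calD,R)]\ge cR\pi_1^+(R)$, which combined with the reduction gives the proposition.

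\textbf{Main obstacle.} The delicate step is converting the one-dimensional vertical clearance $\{i\}\times[2R,h_i]\subset\calD$ into a two-dimensional one-arm event of order $\pi_1^+(R)$ despite the possibly fractal lateral structure of $\partial\calD$ near that segment: the argument must rely solely on the free-BC rectangle crossings~\eqref{eq:wRSW} applied inside sub-domains of $\calD$ and on the definition of $\pi_1^+$, since no geometric regularity of $\calD$ off the vertical ray is available. The same flavour of difficulty reappears in the reduction step, where $\phi_\calD^0[N_B\mid N_B\ge 1]$ must be controlled uniformly over the arbitrary shape of $\partial\calD\cap B$; this is again handled by localising inside $2B$ and exploiting FKG together with~\eqref{eq:wRSW}.
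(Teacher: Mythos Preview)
Your proposal has a genuine gap in the $r=0$ step, which you yourself flag as the ``main obstacle'' but do not actually resolve. The claimed bound
\[
\phi_{\calD}^0\big[(i,2R)\longleftrightarrow x_i \text{ in } \calD\big]\;\ge\;c\,\pi_1^+(h_i-2R)
\]
cannot be extracted from~\eqref{eq:wRSW}. The only geometric input available is that the single column $\{i\}\times[2R,h_i]$ lies in $\calD$; laterally, $\partial\calD$ may hug this segment arbitrarily closely, so no rectangle of macroscopic width fits inside $\calD$ along it, and~\eqref{eq:wRSW} is simply inapplicable. More fundamentally, a pointwise lower bound $\phi_\calD^0[x_i\longleftrightarrow\Lambda_R]\ge c\,\pi_1^+(R)$ for a vertex $x_i$ on a fractal free boundary is exactly the kind of estimate that is \emph{not} available by prior methods (except at $q=2$); this is discussed explicitly in Section~\ref{sec:idea}. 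Your argument is therefore circular: it presupposes per-vertex boundary-arm lower bounds that are at least as hard as the first-moment estimate you are trying to prove.

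The paper bypasses this via the parafermionic observable. It builds a corner-shaped Dobrushin subdomain $\Omega\subset(\calD\cap\Lambda_{7R})\cap\tau(\calD\cap\Lambda_{7R})$ whose wired arc $(ba)$ is a \emph{flat} segment lying well inside $\calD$. Lemma~\ref{lem:op}, a contour-integral identity for the observable, then transfers the well-controlled dual one-arm probabilities along the flat arc $(ba)$ (which genuinely are of order $\pi_1^+$) into a lower bound on $\sum_{z\in(ab)}\phi_\Omega^{0/1}[z\longleftrightarrow(ba)]$, where the free arc $(ab)$ contains pieces of the fractal boundary $\partial\calD$. This flat-to-fractal transfer is the essential new idea, and there is no FKG/RSW substitute for it.

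Your reduction from general $r$ to $r=0$ also has a gap, though a subtler one. The claim that $\phi_\calD^0[N_B\mid N_B\ge1]\le C\,r\pi_1^+(r)$ requires summing one-arm upper bounds over $\partial\calD\cap B$, but $\partial\calD\cap B$ may contain up to order $r^2$ vertices, and you have no uniform control on the individual arm probabilities along a fractal free arc that would make the sum collapse to $r\pi_1^+(r)$. The paper avoids this by passing to an auxiliary domain $\calD'_r$ (the component of $0$ in the union of $r$-boxes contained in $\calD$), whose boundary is piecewise axis-parallel at scale $r$; on $\partial\calD'_r$ the local one-arm upper bound $\lesssim\pi_1^+(\|x-y\|)$ is then available via mixing, and the sum over $x\in\partial\calD'_r$ near a given $r$-box contributes only $\sum_{k\le r}\pi_1^+(k)\lesssim r\pi_1^+(r)$.
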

The special form of the denominator is meant to accommodate the case $r = 0$. 
This case, albeit not important for the application of the proposition (indeed Proposition ~\ref{prop:fund} only uses $r \geq 1$), 
will serve as a stepping stone in the proof of~\eqref{eq:fund}.

The proof of this proposition uses parafermionic observables, as did that of \eqref{eq:wRSW} in \cite{DumSidTas16}. While these observables were previously used to study the critical phase of several 2D models \cite{Smi10,DumSidTas16,BefDumSmi15,DumSmi12,DumGla18}, the present use is new, and we believe that the amount of information extracted from these observables is superior to previous results dealing with general values of $q$; of course when $q=2$ much more is known due to further properties of parafermionic observables that are specific to this cluster-weight.

To conclude this section, let us show how to deduce Proposition~\ref{prop:fund} from Lemma~\ref{lem:mn} and Proposition~\ref{prop:10}.
\begin{proof}[Proposition~\ref{prop:fund}]
	Insert the bound of Lemma~\ref{lem:mn} into~\eqref{eq:fund} to obtain the desired result. 	
\end{proof}

\subsection{Organisation of the paper}

Section~\ref{sec:background} recalls some basics of the random-cluster model. 
There are three steps in the proof of Theorem~\ref{thm:RSWquads}:
\begin{itemize}
\item Proving the statements related to the non-sharp first moment estimate, namely Lemma~\ref{lem:mn} and Proposition~\ref{prop:10};
they are postponed to Sections~\ref{sec:3} and ~\ref{sec:4.1}. 
Proposition~\ref{prop:fund} was already shown to follow from these two results. 
\item Proving the renormalization procedure of Proposition~\ref{prop:renormalization}; this is done in Section~\ref{sec:2}.
\item Showing how Proposition~\ref{prop:crucial_exist} implies Theorem~\ref{thm:RSWquads}; this is done in Section~\ref{sec:3.3}.
Indeed, Proposition~\ref{prop:crucial_exist} was already shown to follow from Proposition~\ref{prop:fund} and Proposition~\ref{prop:renormalization} in Section~\ref{sec:idea}.
\end{itemize} 
Consequences of Theorem~\ref{thm:sRSW} for probabilities of arm events and properties of scaling limits are given in Sections~\ref{sec:4} (except Section~\ref{sec:4.1}) and~\ref{sec:6}, respectively. These are not necessary for the proof of Theorem~\ref{thm:sRSW}.

\paragraph{Convention regarding constants}
In this paper, $(c_i)_{i\geq0}$ denote constants specific to the statements in which they appear, and are fixed throughout the paper. 
The constants $c,c',c''$ and $C,C',C''$ denote small and large quantities, respectively, whose enumeration is restarted in each proof.

\subsection{Acknowledgments} 
The first author is supported by the ERC CriBLaM, the NCCR SwissMAP, the Swiss NSF and an IDEX Chair from Paris-Saclay. The second author is supported by the NCCR SwissMAP and the Swiss NSF. The third author is supported by the ERC grant CRISP and NCCR SwissMAP. We thank Alex Karrila for pointing out Fact~\ref{fact} to us and helping with Section~\ref{sec:3.3}.
\section{Background}\label{sec:background}

We will use standard properties of the random-cluster model. They can be found in \cite{Gri06}, and we only recall them briefly below. 
\medbreak\noindent 1. {\em FKG inequality}: Fix $q\ge 1$ and a domain $\mathcal D=(V,E)$ of $\bbZ^2$. 
An event $A$ is called {\em increasing} if for any $\omega\le\omega'$ (for the partial order on $\{0,1\}^E$), $\omega\in A$  implies that $\omega'\in A$.
For every  increasing events $A$ and $B$,
\begin{align}\label{eq:FKG} 
	\phi_{\calD}^\xi[A\cap B]&\ge \phi_{\calD}^\xi[A]\phi_{\calD}^{\xi}[B].\end{align}
\medbreak\noindent 2. {\em Comparison between boundary conditions}: For every increasing event $A$ and every $\xi'\ge\xi$, where $\xi'\ge\xi$ means that the wired vertices in $\xi$ are also wired in $\xi'$,
	\begin{align} 	\label{eq:CBC} 
	\phi_{\calD}^{\xi'}[A]&\ge \phi_{\calD}^\xi[A].
\end{align}
\medbreak\noindent 3. {\em Spatial Markov property}: for any configuration $\omega'\in\{0,1\}^E$ and any subdomain $\mathcal F=(W,F)$ with  $F\subset E$,
\begin{equation}\label{eq:SMP} 	\phi_{\calD}^\xi[\cdot_{|F}\,|\,\omega_e=\omega'_e,\forall e\notin F]\ge \phi_{\calF}^{\xi'}[\cdot],
\end{equation} where the boundary conditions $\xi'$ on $\calF$ are  defined as follows: 
$x$ and $y$ on $\partial \calF$ are wired if they are connected in $\omega_{|E\setminus F}^\xi$. 
\medbreak\noindent 4. {\em Mixing property}:
    There exists $c_{\rm mix}>0$ such that for every $R\ge1$, every $\calD\supset\Lambda_{R}$, every boundary condition $\xi$ on $\calD$ 
    and every event $A$ depending on edges in $\Lambda_{R/2}$, we have that
    \begin{align}\label{eq:mix2}
    	c_{\rm mix}\,\phi_{\calD}^0[A]\le \phi_{\calD}^\xi [A]
    	\le c_{\rm mix}^{-1}\,\phi_{\calD}^0 [A].
    \end{align}
This property is not trivial and can be obtained using~\eqref{eq:RSW}, see e.g.~\cite{Dum13}.

\section{The renormalization step: proof of Proposition~\ref{prop:renormalization}}\label{sec:2}

In this section, we fix $R/20 \ge r\ge1$. Recall that {\em $r$-boxes} are translates of $\Lambda_r$ by vertices $x\in r\mathbb Z^2$. It is worth keeping in mind that $r$-boxes, having side length $2r$, overlap.

For a $R$-centred domain $\calD$, introduce the subdomain  $\calD_r\subset \calD$ obtained as the connected component of the origin in the union of the $r$-boxes included in $\calD$ and at $L^\infty$-distance at least $10r$ of $\partial\calD$. See Fig.~\ref{fig:2} for an illustration. Notice that the condition $R \geq 20r$ ensures that $\La_R$ is contained in $\calD_r$. 

\begin{figure}[htbp]
  \centering
  \includegraphics[width=.5\textwidth]{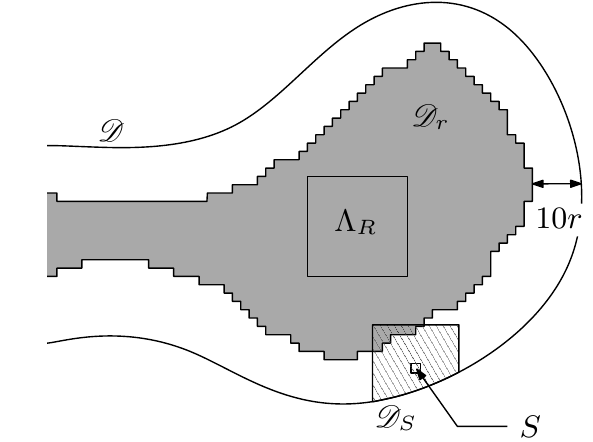}
  \caption{An illustration of the domain $\mathcal D_r$, a seed $S$, and its associated domain $\mathcal D_S$.}
  \label{fig:2}
\end{figure}

A {\em $r$-seed} of $\calD$ is a $r$-box  $S = \Lambda_r(x)$ such that $\Lambda_{2r}(x) \subset \calD$ but $\Lambda_{3r}(x) \nsubset \calD$; 
in other words, such that the translate of $\calD$ by $-x$ is $r$-centred (see Fig.~\ref{fig:2} for an example).
Let 
\begin{align*}
\calD_S&:=\Lambda_{20r}(x)\cap \calD
\end{align*}
and say that $S$ is {\em $c_\square$-activated for} a configuration $\xi$ in $\calD_r$
 if
\begin{equation}\label{eq:def square}
	\phi_{\calD_r\cup\calD_S}^0[S\xleftrightarrow{\Lambda_{7R}\cup \mathcal D_S} \Lambda_R|\,\omega_{|\calD_r}=\xi]\ge c_{\square},
\end{equation}
where $c_\square>0$ is a constant that will be selected properly in the next lemma. 
One may observe that the small domain $\calD_S$ around the seed does not necessarily intersect the domain $\calD_r$ or the box $\Lambda_{7R}$; in such cases the left-hand side of the equation above is always equal to 0, and the seed $S$ is never $c_\square$-activated.

Let $\mathbf N_r(\calD,R,c_\square)$ be the number of $r$-seeds of $\calD$ that are $c_\square$-activated for the configuration in~$\calD_r$. We emphasise that $\mathbf N_r(\calD,R,c_\square)$ is measurable with respect to the configuration restricted to $\mathcal D_r$.

Even though $\mathbf M_r(\calD,R)$ is defined in terms of boundary $r$-boxes connected to $\Lambda_R$, while $\mathbf N_r(\calD,R,c_\square)$ is defined in terms of $r$-seeds that are $c_\square$-activated (and therefore not really connected to $\Lambda_R$), 
one should consider these two quantities comparable. The following lemma provides a bound between the expectation of $\mathbf N_r(\calD,R,c_\square)$ and that of $\mathbf M_r(\calD,R)$.

\begin{lemma}\label{lem:10}
	There exist $c_5,c_\square>0$ such that for every $1\le r\le R/20$ and every $R$-centred domain $\calD$, 
		\begin{align}\label{eq:crucial_exp}
	\phi_{\calD}^0[\mathbf N_{r}(\calD,R,c_\square)]
	\ge c_5\phi_{\calD_r}^0[\mathbf M_r(\calD_r,R)].
	\end{align}
\end{lemma}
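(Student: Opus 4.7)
The plan is to pair each $r$-box $B$ contributing to $\mathbf M_r(\calD_r,R)$ with a seed $S_B$ of $\calD$ that must be $c_\square$-activated whenever $B\xleftrightarrow{\calD_r\cap\Lambda_{7R}}\Lambda_R$. First I would carry out a deterministic geometric assignment. Since every $r$-box meeting $\partial\calD_r$ lies at $L^\infty$-distance in $[10r,12r]$ from $\partial\calD$, a discrete intermediate-value argument applied to the distance function to $\partial\calD$ produces a lattice point $x_B\in r\bbZ^2$ with $\Lambda_{2r}(x_B)\subset\calD$ and $\Lambda_{3r}(x_B)\not\subset\calD$, so that $S_B:=\Lambda_r(x_B)$ is a seed; the construction ensures $|x_B-\mathrm{center}(B)|\le Cr$ for a universal $C$, so $B\subset \calD_{S_B}$. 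The resulting assignment $B\mapsto S_B$ is at most $K$-to-1 since all $B$ sharing a seed lie in an $O(r)$-ball around its centre.

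Next I would build, for each $B$ with $S=S_B$, an $(E(\calD_S)\setminus E(\calD_r))$-measurable event $\calE_B$ asserting the existence of an open path in the strip $\calD_S\setminus\calD_r$ from $S_B$ to some vertex of $B\cap\partial\calD_r$. Such a path is realised through a bounded chain of open circuits around auxiliary $r$-boxes lying in the strip, whose doubles fit inside because the strip has width at least $7r$. Applying~\eqref{eq:RSW} in each auxiliary box,~\eqref{eq:CBC} to absorb the arbitrary boundary conditions induced by conditioning on $\calD_r$, and~\eqref{eq:FKG} to combine the circuits, one obtains $\phi^\zeta[\calE_B]\ge c_\square:=c_{\rm cir}^{C'}>0$ uniformly in the boundary conditions $\zeta$. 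Given a configuration $\xi$ on $E(\calD_r)$ realising $B\xleftrightarrow{\calD_r\cap\Lambda_{7R}}\Lambda_R$, the spatial Markov property~\eqref{eq:SMP} tells us that $\phi^0_{\calD_r\cup\calD_S}[\,\cdot\,|\,\omega_{|\calD_r}=\xi]$ is a random-cluster measure on the strip whose boundary conditions wire together the $\xi$-connected vertices of $\partial\calD_r$; on $\calE_B$, concatenating the $\xi$-path inside $\calD_r$ with the $\calE_B$-path in the strip (and using the wiring to $\Lambda_R$ encoded in those boundary conditions) yields $S_B\xleftrightarrow{\Lambda_{7R}\cup\calD_S}\Lambda_R$ in the combined configuration, so the conditional probability in~\eqref{eq:def square} is at least $c_\square$ and $S_B$ is $c_\square$-activated for $\xi$.

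Pointwise on $\xi$, the $K$-to-1 property gives $\mathbf N_r(\calD,R,c_\square)\ge K^{-1}\mathbf M_r(\calD_r,R)$. Since $\mathbf N_r$ is an increasing function of $\xi$ (a more wired $\xi$ induces a more wired boundary condition on the strip, increasing the conditional connection probability) and the marginal of $\phi_\calD^0$ on $E(\calD_r)$ stochastically dominates $\phi_{\calD_r}^0$ by~\eqref{eq:SMP} and~\eqref{eq:CBC}, taking expectations gives $\phi_\calD^0[\mathbf N_r]\ge\phi_{\calD_r}^0[\mathbf N_r]\ge K^{-1}\phi_{\calD_r}^0[\mathbf M_r]$, which is~\eqref{eq:crucial_exp} with $c_5=K^{-1}$. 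The most delicate point I foresee is justifying that the $\xi$-cluster of $\Lambda_R$ reaches $B\cap\partial\calD_r$ whenever $B$ is connected to $\Lambda_R$ in $\xi$: when $B$ straddles $\partial\calD_r$ and the $\xi$-path enters $B$ through an interior vertex, nothing formal forces the cluster to also contain a boundary vertex of $B$. Handling this may require strengthening $\calE_B$ to reach a richer subset of $V(\calD_r)\cap\calD_S$, or replacing $\mathbf M_r$ by the variant counting only boxes whose $\partial\calD_r$-restriction is connected to $\Lambda_R$ and then comparing the two variants in expectation via an auxiliary RSW argument inside $B$.
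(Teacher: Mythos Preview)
Your overall strategy matches the paper's: pair each $r$-box $B$ meeting $\partial\calD_r$ with a nearby seed $S_B$ (with bounded multiplicity) and show that the connection $\{B\xleftrightarrow{\calD_r\cap\Lambda_{7R}}\Lambda_R\}$ forces $S_B$ to be activated. The difficulty you flag at the end is not a technicality, however; it is the main obstruction, and your proposed remedies do not close it.

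Because your gluing event $\calE_B$ is $(E(\calD_S)\setminus E(\calD_r))$-measurable, it can only produce a path ending on $\partial\calD_r$. But $\{B\leftrightarrow\Lambda_R\}$ only guarantees that the $\xi$-cluster of $\Lambda_R$ reaches \emph{some} vertex of $B\cap\calD_r$, which may lie in the interior of $\calD_r$ (indeed $B$ may be one of the $r$-boxes forming $\calD_r$ and hence be entirely contained in it). No event supported on $E(\calD_S)\setminus E(\calD_r)$ can force a connection to an interior vertex of $\calD_r$, so ``strengthening $\calE_B$'' cannot help. Replacing $\mathbf M_r$ by the variant counting only boxes whose boundary trace is connected to $\Lambda_R$ just relocates the problem: comparing the two variants in expectation again requires connecting an interior vertex of $B$ to $\partial\calD_r$, which is the same gluing issue.

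The paper resolves this by giving up the pointwise inequality and letting the gluing event use edges \emph{inside} $\calD_r$. Writing $X(\xi)=\phi^0_{\calD_r\cup\calD_S}[S\xleftrightarrow{\Lambda_{7R}\cup\calD_S}\Lambda_R\,|\,\omega_{|\calD_r}=\xi]$, the elementary bound $\mathbf 1\{X\ge c_\square\}\ge X-c_\square$ yields
\[
\phi^0_{\calD_r\cup\calD_S}\bigl[S\text{ is $c_\square$-activated}\ \big|\ B\xleftrightarrow{\calD_r\cap\Lambda_{7R}}\Lambda_R\bigr]
\ \ge\ \phi^0_{\calD_r\cup\calD_S}\bigl[S\xleftrightarrow{\Lambda_{7R}\cup\calD_S}\Lambda_R\ \big|\ B\xleftrightarrow{\calD_r\cap\Lambda_{7R}}\Lambda_R\bigr]-c_\square.
\]
The gluing event $H:=\{\Circ_{B_0}\text{ for every $r$-box }B_0\text{ with }\overline{B_0}\subset\calD_S\}$ depends on all of $\calD_S$, including $\calD_S\cap\calD_r$; on $H$ the open circuits catch the $\xi$-path wherever it enters $B$ and relay it to $S$, so $H\cap\{B\leftrightarrow\Lambda_R\}\subset\{S\leftrightarrow\Lambda_R\}$. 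Since $H$ and $\{B\leftrightarrow\Lambda_R\}$ are both increasing, FKG gives $\phi^0_{\calD_r\cup\calD_S}[H\,|\,B\leftrightarrow\Lambda_R]\ge\phi^0_{\calD_r\cup\calD_S}[H]\ge c$ by~\eqref{eq:RSW}, and one takes $c_\square=c/2$. Summing over $B$ and using the bounded multiplicity of $B\mapsto S_B$ then gives~\eqref{eq:crucial_exp}. In short: allow the gluing event to overlap $\calD_r$ and trade the independence you wanted for FKG plus the averaging trick $\mathbf 1\{X\ge c_\square\}\ge X-c_\square$.
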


\begin{proof}
By definition,~\eqref{eq:crucial_exp} can be rewritten as
  \begin{equation}
    \label{eq:1}
 \sum_{S\text{ $r$-seed}}\phi_{\calD}^0[S\text{ is $c_\square$-activated}]\ge c_5 \sum_{\substack{B\text{ $r$-box}:\\B\cap \partial \mathcal D_r\neq\emptyset}}\phi_{\calD_r}^0[B\xleftrightarrow{\Lambda_{7R}} \Lambda_R].
 \end{equation}
 In order to prove this equation, we fix a $r$-box $B=\La_r(x)$ intersecting $\partial \calD_r$ and consider a $r$-seed $S=S(B)\subset \Lambda_{15r}(x)$; such a seed exists since $B$ is at distance between $8r$ and $12r$ from $\partial\calD$.
 For each such pair $(B,S)$ we will prove  that the inequality 
 \begin{equation}
   \label{eq:5}
   \phi_{\calD}^0[S\text{ is $c_\square$-activated}]\ge c_\square\phi_{\calD_r}^0[B\xleftrightarrow{\Lambda_{7R}} \Lambda_R]
 \end{equation}
 holds for a suitable choice of the constant $c_\square$. 
 By summing this equation over all $r$-boxes intersecting $\partial \calD_r$, 
 and using that the number of boxes $B$ corresponding to any given seed $S$ is bounded by a constant $C$, 
 this concludes the proof with $c_5=c_\square/C$. 

We now prove~\eqref{eq:5}. First, by comparison between boundary conditions~\eqref{eq:CBC} together with the fact that being $c_\square$-activated is an increasing event, we have
 \begin{align}\label{eq:6}
   \phi_{\calD}^0[S\text{ $c_\square$-activated}]&\ge \phi_{\calD_r\cup\calD_S}^0[S\text{ $c_\square$-activated}]\\
   &\ge\phi_{\calD_r\cup\calD_S}^0[S\text{ $c_\square$-activated} \:|\:  B\xleftrightarrow{\calD_r\cap\Lambda_{7R}} \Lambda_R] \phi_{\calD_r\cup\calD_S}^0[B\xleftrightarrow{\calD_r\cap\Lambda_{7R}} \Lambda_R].\notag
\end{align}

Define the random variable 
$X(\omega)=\phi_{\calD_r\cup\calD_S}^0[S\xleftrightarrow{\Lambda_{7R}\cup \mathcal D_S} \Lambda_R|\,\omega_{|\calD_r}]$,
and observe that $S$ is $c_\square$-activated if and only if $X(\omega) \geq c_\square$.
Apply the inequality $\mathbf1_{X\ge c_\square}\ge X-c_\square$ to deduce that
\begin{align}\label{eq:9}
  \phi_{\calD_r\cup\calD_S}^0[S\text{ $c_\square$-activated}\: | \: B\xleftrightarrow{\calD_r\cap\Lambda_{7R}} \Lambda_R]
 &\ge\phi_{\calD_r\cup\calD_S}^0[S \xleftrightarrow{\Lambda_{7R}\cup \calD_S}\Lambda_R\: | \: B\xleftrightarrow{\calD_r\cap\Lambda_{7R}} \Lambda_R]-c_\square.   
\end{align}
For the above, it is essential that $X$ is measurable in terms of the configuration in $\calD_r$.

Finally, we can use the RSW-estimate~\eqref{eq:RSW} to bound the first term in the lower bound above as follows.  Write $H$ for the event that all $r$-boxes $B_0$ with $\overline B_0\subset\calD_S$ satisfy $\Circ_{B_0}$.
Observe that, if $B$ is connected to $\Lambda_R$ inside $\calD_r$ and if $H$ occurs, then $S$ is connected to $\Lambda_R$ inside $(\calD_r \cap\Lambda_{7R})\cup \calD_S$ (see Fig.~\ref{fig:domains}).

\begin{figure}
\begin{center}
\includegraphics[width=0.8\textwidth]{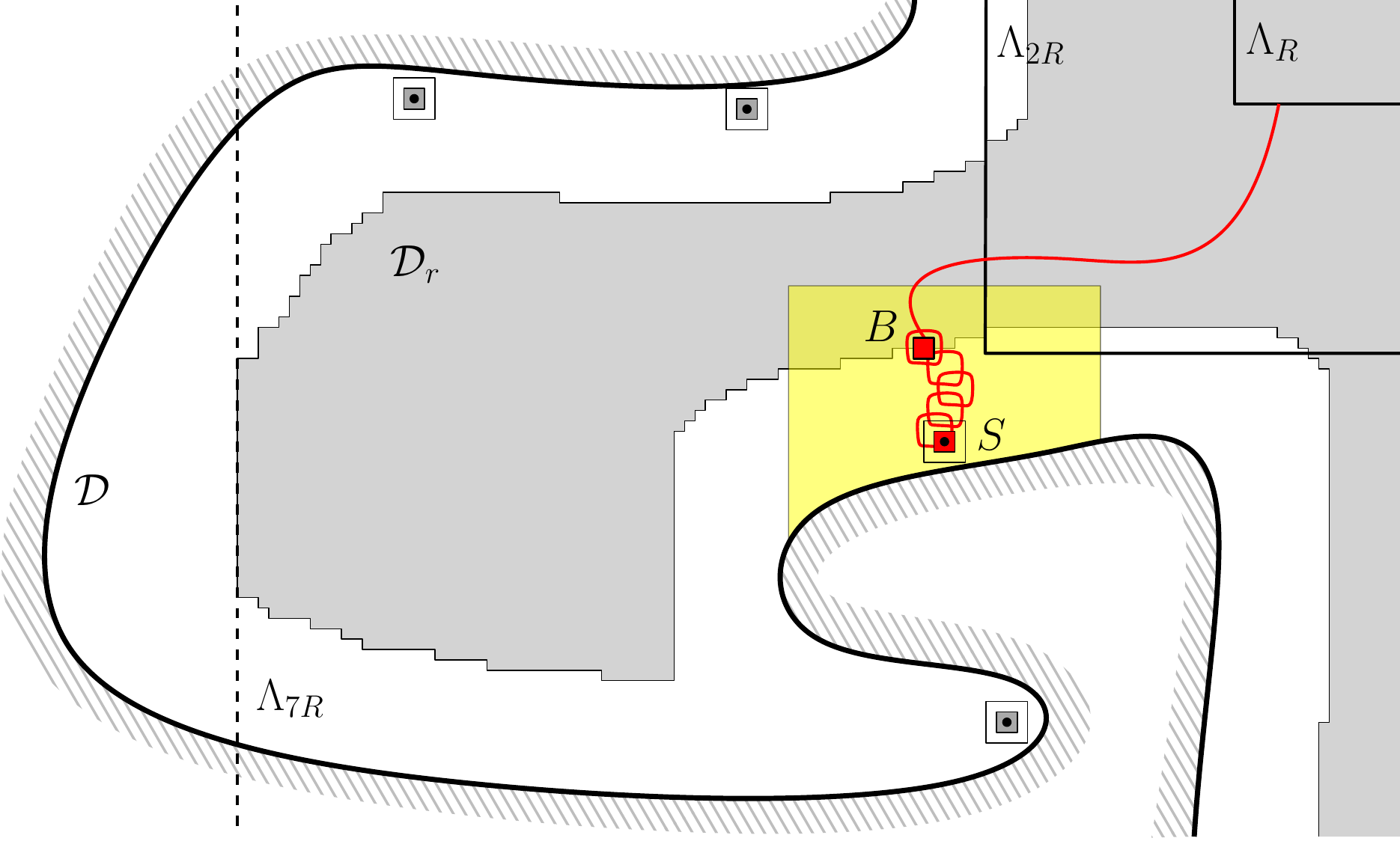}
\caption{The domain $\calD_r$ associated to $\calD$ is grey; several seeds are depicted. Notice that some seeds cannot be activated (for instance the lowest one). The box $B$ and seed $S$ are marked in red, the domain $\calD_S$ in yellow. When $H$ occurs and $B$ is connected to $\Lambda_R$ inside $\calD_r$, then $S$ is connected to $\Lambda_R$ inside $\calD_r\cup \calD_S$.}
\label{fig:domains}
\end{center}
\end{figure}

We deduce from~\eqref{eq:RSW}, the FKG inequality~\eqref{eq:FKG} and the comparison between boundary conditions~\eqref{eq:CBC} that 
$H$ occurs with probability bounded below by some constant $c > 0$. Thus
\begin{align}\label{eq:8}
 &\phi_{\calD_r\cup\calD_S}^0[S \xleftrightarrow{\Lambda_{7R}\cup\mathcal D_S}\Lambda_R\: | \: B\xleftrightarrow{\calD_r\cap\Lambda_{7R}} \Lambda_R] \ge\phi_{\calD_r\cup\calD_S}^0[H\: | \: B\xleftrightarrow{\calD_r\cap\Lambda_{7R}} \Lambda_R]\ge \phi_{\calD_r\cup\calD_{S}}^0[H]
	\ge c.
\end{align}

Equations~\eqref{eq:6},~\eqref{eq:9},~\eqref{eq:8} and the comparison between boundary conditions imply 
\begin{align*}
	\phi_{\calD}^0[S\text{ $c_\square$-activated}]
	\ge(c-c_\square)\phi_{\calD_r\cup\calD_{S}}^0[B\xleftrightarrow{\calD_r}   \Lambda_R]
	\ge(c-c_\square)\phi_{\calD_r}^0[B\xleftrightarrow{\Lambda_{7R}} \Lambda_R],
\end{align*}
which concludes the proof if we choose $c_\square=c/2$.
\end{proof}

We are ready to prove Proposition~\ref{prop:renormalization}.
\begin{proof}[Proposition~\ref{prop:renormalization}]
By Lemma~\ref{lem:10}, it suffices to prove the existence of $c > 0$ 
such that for every $1\le r\le R/20$ and every $R$-centred domain $\calD$,
\begin{equation}\label{eq:ai1}
	\phi_\calD^0 [\Lambda_R\xleftrightarrow{\Lambda_{9R}} \partial\calD]
	\ge c \phi_\calD^0[\mathbf N_r(\calD,R,c_\square)]\min\{p(r),(\tfrac rR)^2\}.
\end{equation}
Fix $r$, $R$ and $\calD$ as above. For each seed $S=\Lambda_r(x)$, introduce the events
\begin{align*}
E_S&:=\{S\xleftrightarrow{(\calD_r\cap\Lambda_{7R})\cup\calD_S}\Lambda_R\},\\
F_S&:=\{S\xleftrightarrow{\Lambda_{9r}(x)}\partial\calD\}. \end{align*}
Fix a configuration $\xi$ in $\calD_r$; it contains $\mathbf N_r(\xi)$ $c_\square$-activated seeds. Among them, we can select a subset  $\mathbf A(\xi)$ of at least $\frac 1 C \mathbf N_r(\xi)$  $c_\square$-activated seeds with disjoint corresponding domains $\mathcal D_S$, where $C$ is an absolute constant which bounds from above  the numbers of domains $\mathcal D_{S'}$  intersecting a fixed domain $\mathcal D_S$ (say $C=100^2$).

The FKG inequality~\eqref{eq:FKG} and the comparison between boundary conditions~\eqref{eq:CBC} give that for every $S\in \mathbf A(\xi)$,
\begin{align}
\phi_{\calD_r \cup \calD_S}^0[\Circ_S\cap E_S\cap F_S|\, \omega_{|\calD_r}=\xi]
& \ge \phi_{\calD_S\setminus \calD_r}^0[\Circ_S]\, \phi_{\calD_r\cup\calD_S}^0[E_S|\,\omega_{|\calD_r}=\xi]\, \phi_{\calD_S\setminus \calD_r}^0[F_S] \nonumber\\
& \ge c_{\rm cir}\,c_\square\, p(r),
\label{eq:renorm11}
\end{align}
where in the last inequality we used~\eqref{eq:RSW}, the definition of $c_\square$-activation, and the definition of $p(r)$.

If $\Circ_S\cap E_S\cap F_S$ occurs for some $S\in\mathbf A(\xi)$, then $\Lambda_R$  is connected in $\Lambda_{9R}$  to $\partial\calD$ (we use that the respective supports $\Lambda_{2r}(x)$, $(\calD_r\cap\Lambda_{7R})\cup\calD_S$, and $\Lambda_{9r}(x)$ of the three events are all subsets of  $\Lambda_{9R}$, thanks to the condition $R\ge 20r$). 
Since the seeds in $\mathbf A(\xi)$ have disjoint domains $\calD_S$, and using the comparison between boundary conditions, 
\eqref{eq:renorm11} implies that, under $\phi_{\calD}^0[.|\, \omega_{|\calD_r}=\xi]$,
the probability that $\Lambda_{R}$ is connected in $\Lambda_{9R}$ to $\partial\calD$ is larger than 
the probability that a binomial random variable with parameters $\frac1C\mathbf N_r(\xi)$ and $c_{\rm cir}c_\square p(r)$ is strictly positive.
Averaging on $\xi$ gives
\begin{align*}
\phi_\calD^0 [\Lambda_R\xleftrightarrow{\Lambda_{9R}}   \partial\calD]&\ge \phi_\calD^0[1-(1-c_{\rm cir}c_\square p(r))^{\mathbf N_r(\xi)/C}]\\
&\ge \phi_\calD^0[1-(1-c\min\{p(r),(\tfrac rR)^2\})^{\mathbf N_r(\xi)/C}]\\
&\ge \tfrac{c}{e}\phi_\calD^0[\mathbf N_r(\calD,R,c_\square)]\min\{p(r),(\tfrac rR)^2\} ,
\end{align*}
where in the second inequality we used that $x\mapsto 1-(1-x)^n$ is increasing in $x$, and in the third that $c\in(0,c_{\rm cir}c_\square)$ is chosen small enough that $c(r/R)^2\mathbf N_r(\xi)/C \leq 1$ for {\em every} realization of $\xi$, so that we can use that $1-(1-x)^n\ge nx/e$.

In conclusion,~\eqref{eq:ai1} is proved. 
\end{proof}

\section{Crossings in general quads: from Proposition~\ref{prop:crucial_exist} to Theorem~\ref{thm:sRSW}}\label{sec:3.3}

In order to prove Theorem~\ref{thm:sRSW} it suffices to show the lower bound (\emph{i.e.}~the first item) for free boundary conditions. Indeed, the lower bound for arbitrary boundary conditions then follows from the comparison between boundary conditions~\eqref{eq:CBC}. The upper bound (\emph{i.e.}~the second item) may be deduced from the lower bound by duality (see \cite{Gri06} for background on duality for the random cluster model) and the fact that $\ell_\calD[(ab),(cd)]=1/\ell_\calD[(bc),(da)]$. The rest of the section is therefore dedicated to showing the lower bound for free boundary conditions. 
The challenge here is to translate the estimates of Proposition~\ref{prop:crucial_exist} to treat crossing probabilities in general quads. 
We divide the proof in two: we first treat quads with small extremal distance, and then we generalise to quads with arbitrary extremal distance. 

\paragraph{Quads with small extremal distance} 
Let us show that there exist constants $\eta,m >0$ such that for any discrete quad $(\calD,a,b,c,d)$ 
with $\ell_{\calD}[(ab),(cd)] \leq m$, 
\begin{align*}
	\phi_{\calD}^0[(ab)\longleftrightarrow(cd)] \geq \eta.
\end{align*}

The proof will be based on the following fact shown in \cite{KemSmi12} within the proof of the implication ${\bf G2}\Rightarrow{\bf C2}$ of Proposition~2.6.
\begin{fact}\label{fact}
	There exists a constant $m>0$ such that for every quad $(\calD,a,b,c,d)$ 
	with $\ell_{\calD}[(ab),(cd)] \leq m $, 
	there exist $x\in\bbR^2$ and $R >0$ such that any crossing $\gamma$ from $(bc)$ to $(da)$ in $\calD$ contains a sub-path that 
	connects $\La_R(x)$ to $\partial\La_{2R}(x)$.
\end{fact}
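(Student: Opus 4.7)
The strategy is to find a short Jordan arc $\gamma_0\subset\calD$ joining $(ab)$ to $(cd)$; by the Jordan curve theorem such an arc separates $(bc)$ from $(da)$ in $\calD$, so every $(bc)$-to-$(da)$ crossing must cross $\gamma_0$. Enclosing $\gamma_0$ in an appropriately sized pair of squares $(\Lambda_R(x),\Lambda_{2R}(x))$ will then produce the announced sub-path.

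To construct $\gamma_0$, let $\varphi\colon \calD\to[0,1]\times[0,\ell]$ be the conformal uniformization sending $a,b,c,d$ to the corners, with $\ell:=\ell_\calD[(ab),(cd)]\le m$. For $x_0\in(0,1)$, the preimage of the vertical slice $\{x_0\}\times[0,\ell]$ has Euclidean length $\int_0^\ell|(\varphi^{-1})'(x_0+iy)|\,dy$, whose square is controlled via Cauchy--Schwarz by $\ell\int_0^\ell|(\varphi^{-1})'(x_0+iy)|^2\,dy$. Since $\int_0^1\int_0^\ell|(\varphi^{-1})'|^2\,dy\,dx_0=\mathrm{Area}(\calD)$, Markov produces an $x_0\in(1/3,2/3)$ for which $\gamma_0:=\varphi^{-1}(\{x_0\}\times[0,\ell])$ has Euclidean length at most $\sqrt{3m\,\mathrm{Area}(\calD)}$. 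Rescaling $\calD$ so that $\mathrm{diam}(\calD)=1$ (the statement is scale invariant), one gets $\mathrm{diam}(\gamma_0)\le\sqrt{3m}$; then any $x\in\gamma_0$ together with $R:=\sqrt{3m}$ satisfies $\gamma_0\subset\Lambda_R(x)$, so every $(bc)$-to-$(da)$ crossing meets $\Lambda_R(x)$.

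The main obstacle is to ensure, for $m$ small enough, that $\Lambda_{2R}(x)$ contains neither $(bc)$ nor $(da)$, so that the crossings must actually escape $\Lambda_{2R}(x)$ and thus yield a sub-arc linking $\Lambda_R(x)$ to $\partial\Lambda_{2R}(x)$. This is where a Koebe-type conformal distortion estimate is used: the choice $x_0\in(1/3,2/3)$ keeps the slice at positive conformal distance from both short sides of the rectangle, and standard distortion bounds translate this into a universal lower bound (of order $\mathrm{diam}(\calD)=1$) on the Euclidean distance from $\gamma_0$ to $(bc)\cup(da)$. Taking $m$ small enough that $2\sqrt{3m}$ is less than this constant makes $\Lambda_{2R}(x)$ disjoint from $(bc)\cup(da)$; every crossing then has both endpoints outside $\Lambda_{2R}(x)$ and meets $\Lambda_R(x)$, and a sub-arc taken between an entry into $\Lambda_R(x)$ and the next exit across $\partial\Lambda_{2R}(x)$ gives the required sub-path.
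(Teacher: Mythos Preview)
First, note that the paper does not itself prove this fact: it is quoted from \cite{KemSmi12} (from the proof of ${\bf G2}\Rightarrow{\bf C2}$ in their Proposition~2.6), so there is no in-paper proof to compare against.

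Your length--area construction of a short separating arc $\gamma_0$ is correct and natural; the gap is in the last paragraph. The claim that ``standard distortion bounds'' give a universal lower bound (of order $\mathrm{diam}(\calD)=1$) on $\mathrm{dist}(\gamma_0,(bc)\cup(da))$ is false. Take $\calD$ to be the upper half-disc $\{z\in\bbH:|z|<2\}$ with $a=-1$, $b=-\epsilon$, $c=\epsilon$, $d=1$ on the real axis, so that $(bc)=[-\epsilon,\epsilon]$ and $\ell_{\calD}[(ab),(cd)]\asymp 1/\log(1/\epsilon)\to 0$. By the symmetry $z\mapsto -\bar z$, every curve $\varphi^{-1}(\{x_0\}\times[0,\ell])$ meets the positive imaginary axis; equating the harmonic measure of $(bc)$ from $ih$ in $\calD$ (which is $\asymp\epsilon/h$) with the harmonic measure of the right side of the rectangle from $(x_0,\ell/2)$ (which is $\asymp e^{-\pi(1-x_0)/\ell}\asymp\epsilon^{c(x_0)}$ for $x_0\in(1/3,2/3)$) shows that this meeting point has height $h=O(\epsilon^{1/3})$. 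Thus every admissible $\gamma_0$ comes within distance $O(\epsilon^{1/3})$ of $0\in(bc)$, and after normalising $\mathrm{diam}(\calD)=1$ this distance still tends to $0$ with $\epsilon$. So for every fixed $m>0$ there are admissible quads with $\mathrm{dist}(\gamma_0,(bc))$ arbitrarily small; you cannot force $\Lambda_{2R}(x)\cap(bc)=\emptyset$ by shrinking $m$. Positive \emph{conformal} separation simply does not imply Euclidean separation of order $\mathrm{diam}(\calD)$ when one of the arcs is conformally tiny.

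In the example above your choice of $x,R$ still happens to work, because $(da)$ is far from $\gamma_0$ and any $(bc)$-to-$(da)$ crossing must leave $\Lambda_{2R}(x)$ to reach $(da)$. But your \emph{argument} does not establish this, and what is really needed is a proof that no $(bc)$-to-$(da)$ crossing can lie entirely inside $\Lambda_{2R}(x)$. That requires exploiting $\mathrm{diam}(\calD)=1\gg R$ in a more substantial way (for instance, showing that some portion of $\partial\calD$ lies outside $\Lambda_{2R}(x)$ on the correct side of $\gamma$); it is not a one-line Koebe consequence.
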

From now on, fix a quad $(\calD,a,b,c,d)$ with $\ell_{\calD}[(ab),(cd)] \leq m$ and let $x$ and $R$ be given by the previous fact, with $R$ minimal for this property. To simplify notation, let us translate $\calD$ so that $x = 0$. Observe that the minimal graph distance between $(ab)$ and $(cd)$ in $\calD\cap(\Lambda_{5R/3}\setminus\Lambda_{4R/3})$ is larger than $R/3$, since otherwise one may find $x'$ and $R'<R$ satisfying the assumptions of Fact~\ref{fact}, which would contradict the minimality of $R$. 

Below we use the notation $\Circ_B$ and $F_S$ of Section~\ref{sec:2}. Set $r:=\lfloor R/60\rfloor$ and
 consider the event $H$ that 
\begin{itemize}[nolistsep]
\item $\Circ_B$ occurs for every $r$-box $B\subset\Lambda_{2R}$ with $\overline B\subset\calD$,
\item $F_S$ occurs for every $r$-seed $S\subset\Lambda_{2R}$ of $\calD$.
\end{itemize}
Then, if $H$ occurs, we claim that $(ab)$ is connected to $(cd)$ inside $\calD$. 
Indeed, by the choices of $R$ and $r$, there exists a seed $S = \La_r(x)$ 
with $\La_{9r}(x)\subset \Lambda_{5R/3}\setminus\Lambda_{4R/3}$
and such that $\La_{9r}(x)$ intersects the arc $(ab)$, but not any other part of $\partial \calD$. 
The same holds for a seed $S'$, with the arc $(ab)$ replaced by $(cd)$. 
When $H$ occurs, there exist open circuits contained in $\calD$ surrounding each of these two seeds, and connected to each other inside $\calD$. 
Moreover, since $F_{S}$ and $F_{S'}$ occur, the circuits above are connected to $(ab)$ and $(cd)$, respectively. 
See Fig.~\ref{fig:quad_annulus} for an illustration.

The FKG inequality~\eqref{eq:FKG} together with~\eqref{eq:RSW} and Proposition~\ref{prop:crucial_exist} imply that 
\begin{align*}
	\phi_{\calD}^0[(ab)\longleftrightarrow(cd)]\ge\phi_{\calD}^0[H]\ge (c_{\rm cir}c_0)^{C}=:\eta>0,
\end{align*}
where $C$ is a deterministic bound on the number of $r$-boxes in $\Lambda_{2R}$.

	\begin{figure}
	\begin{center}
	\includegraphics[width = 0.45\textwidth]{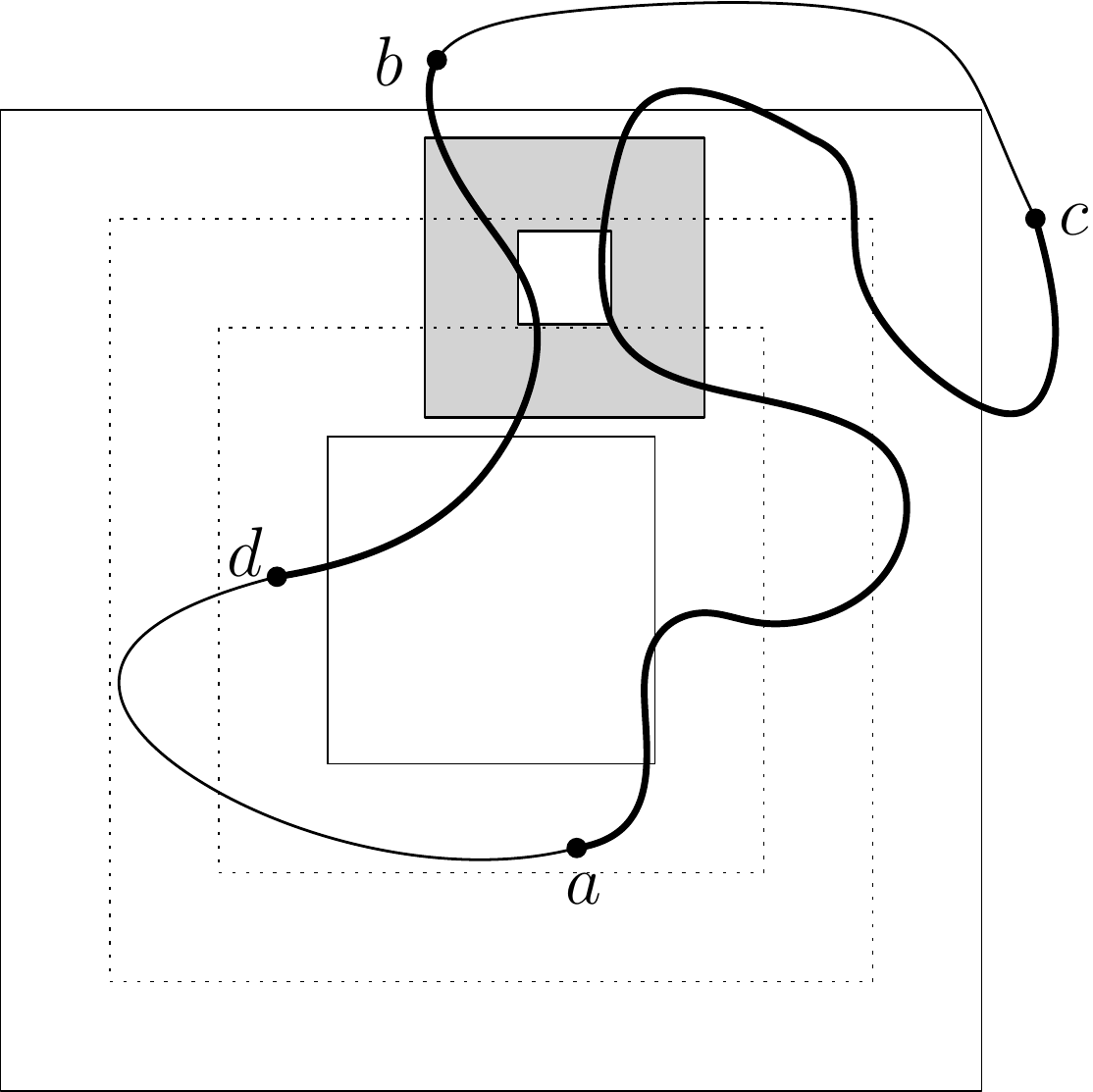}\qquad
	\includegraphics[width = 0.45\textwidth]{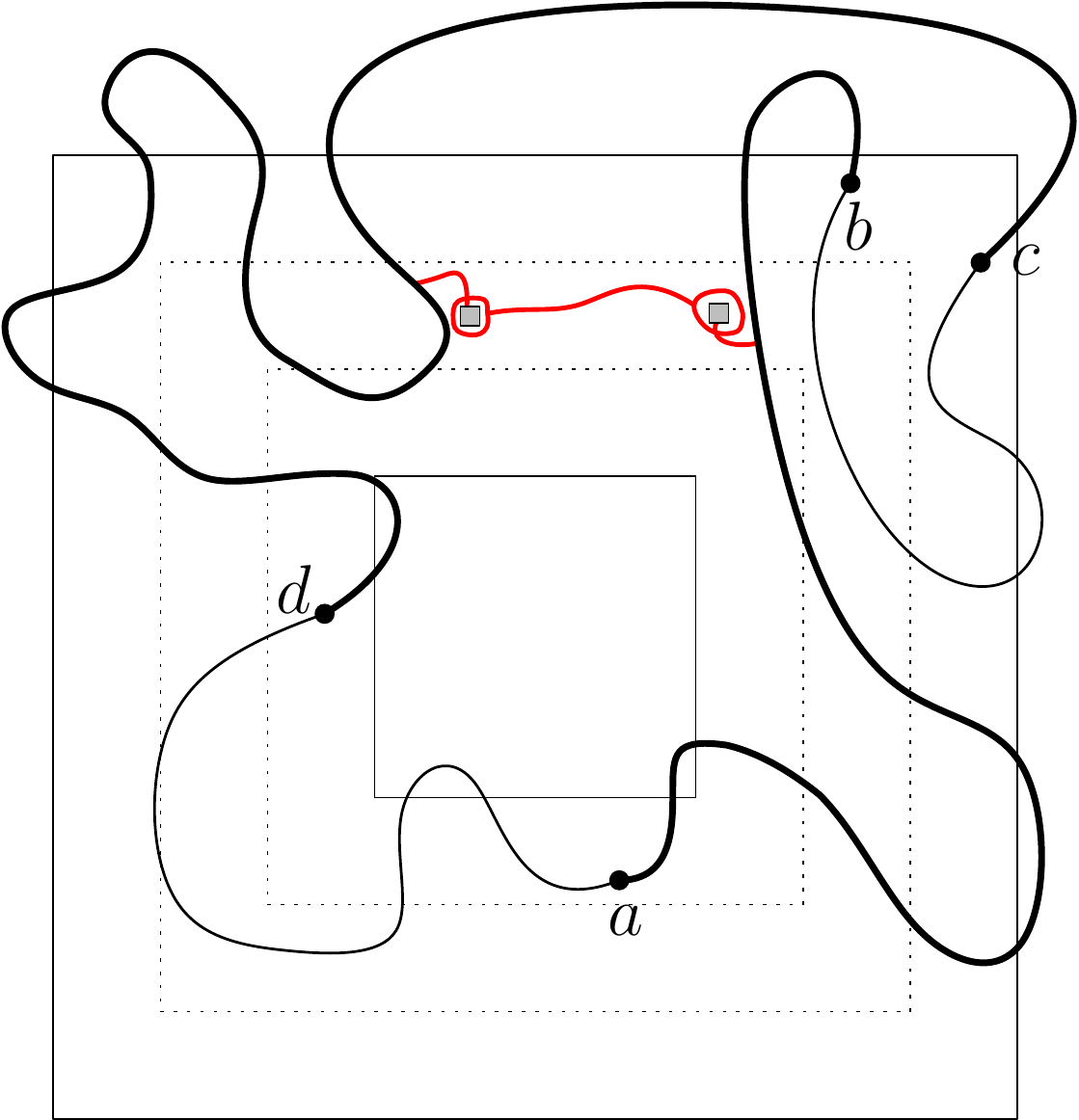}
	\caption{Quads $(\calD,a,b,c,d)$ with small extremal distance between $(ab)$ and $(cd)$. 
	{\em Left:}~The minimality of $R$ ensures that the arcs $(ab)$ and $(cd)$ are at a distance at least $R/3$ from each other in the middle annulus $\Lambda_{5R/3}\setminus\Lambda_{4R/3}$. Indeed, otherwise a smaller annulus (in grey) satisfying Fact~\ref{fact} may be found. 
	{\em Right:} When $H$ occurs, all seeds in $\La_{2R}$ are connected to each other; if in addition $F_S$ and $F_S'$ occur for particular seeds $S$ and $S'$, then $\calD$ contains a crossing from $(ab)$ to $(cd)$.}
	\label{fig:quad_annulus}
	\end{center}
	\end{figure}

\paragraph{Quads with arbitrary extremal distance}
	Fix $M >2$ and some quad $(\calD,a,b,c,d)$ with $\ell:= \ell_\calD[(ab),(cd)] \leq M$.
	By potentially restricting the crossing to a smaller quad, we may assume $\ell > 2$, which we do for simplicity. 
	Let $\Psi$ be the conformal map that maps $\calD$ to the rectangle $[-1,1] \times [0, 2\ell]$, 
	with $a,b,c,d$ being mapped to the corners $(-1,0),(1,0),(1,2\ell)$ and $(-1,2\ell)$, respectively. 
	
	For $\delta \in (0,1)$, define the simply connected domain (see Fig.~\ref{fig:short_to_long}),
	$$\calQ= \calQ(\delta):= [-1,1]^2 \setminus ([-\delta,\delta]^2 \cup \{0\}\times [-1,-\delta]).$$
	Consider four points (prime ends to be precise) $u,v,w$ and $t$ on $\partial \calQ$ that split its boundary into four arcs: 
	\begin{itemize}[noitemsep]
	\item $(uv)$ is the right side of the vertical segment $\{0\} \times[-1,-\delta]$;
	\item $(vw)$ coincides with the boundary of $[-1,1]^2$;
	\item $(wt)$ is the left side of $\{0\} \times [-1,-\delta]$;
	\item $(tu)$ coincides with the boundary of $[-\delta,\delta]^2$.
	\end{itemize}
	The quantity $\ell_\calQ[(uv),(wt)]$ can be chosen smaller than $m$ given by the first part of this section provided  $\delta$ is chosen sufficiently small.

	For $h=\delta, 2\delta,\dots, \ell$ (we assume that $\ell/\delta$ is an integer), write $\calQ_h$ for the intersection of $\calQ + (0,h)$ with $[-1,1] \times [0, 2\ell]$.
	Any such translate is a simply connected domain. 
	We will consider it with four marked prime ends $u_h,v_h,w_h,t_h$ given by
	\begin{itemize}[noitemsep]
	\item  	if $h \geq 1$, $(\calQ_h,u_h,v_h,w_h,t_h)$ is simply a translate of $(\calQ,u,v,w,t)$; 
	\item if $\delta < h < 1$, $v_h = (1,0)$, $w_h = (-1,0)$, $u_h$ and $t_h$ the translates of $u$ and $t$ by $(0,h)$;
	\item if $h= \delta $, let $v_h = (1,0)$, $w_h = (-1,0)$, $u_h = (\delta,0)$ and $t_h = (-\delta, 0)$.
	\end{itemize} 
	Notice that in all these cases 
	\begin{align}\label{eq:sfA}
		\ell_{\calQ_h}[(u_hv_h),(w_ht_h)] \leq \ell_{\calQ}[(uv),(wt)]  \leq m.
	\end{align}
	
	Consider now the pre-images $\Psi^{-1}(\calQ_h; u_h,v_h,w_h,t_h)$ of the domains $(\calQ_h; u_h,v_h,w_h,t_h)$.
	To not overburden the notation, we will consider that they are discrete quads; 
	this is not generally true, and $\Psi^{-1}(\calQ_h; u_h,v_h,w_h,t_h)$ should be replaced below by a discretisation of itself. 
	This may be done with only a limited influence on the constant $\eta (L)$ that is obtained at the end of the proof. 

	Since extremal length is preserved by conformal maps, 
	the extremal distance in $\Psi^{-1}(\calQ_h)$ between $\Psi^{-1}(u_h v_h)$ and $\Psi^{-1}(w_h t_h)$ is smaller than $m$ for any $h=\delta, 2\delta,\dots, \ell$. 
	Write $A_h$ for the event that $\Psi^{-1}(\calQ_h)$ 
	contains an open path from $\Psi^{-1}(u_h v_h)$ to $\Psi^{-1}(w_h t_h)$.
	The first part of this section implies that 
	\begin{align*}
	\phi_\calD^0 [A_h] \geq 	\phi_{\Psi^{-1}(\calQ_h)}^0 [A_h] \geq \eta.
	\end{align*}
	
	\begin{figure}
	\begin{center}
	\includegraphics[width = 0.9\textwidth]{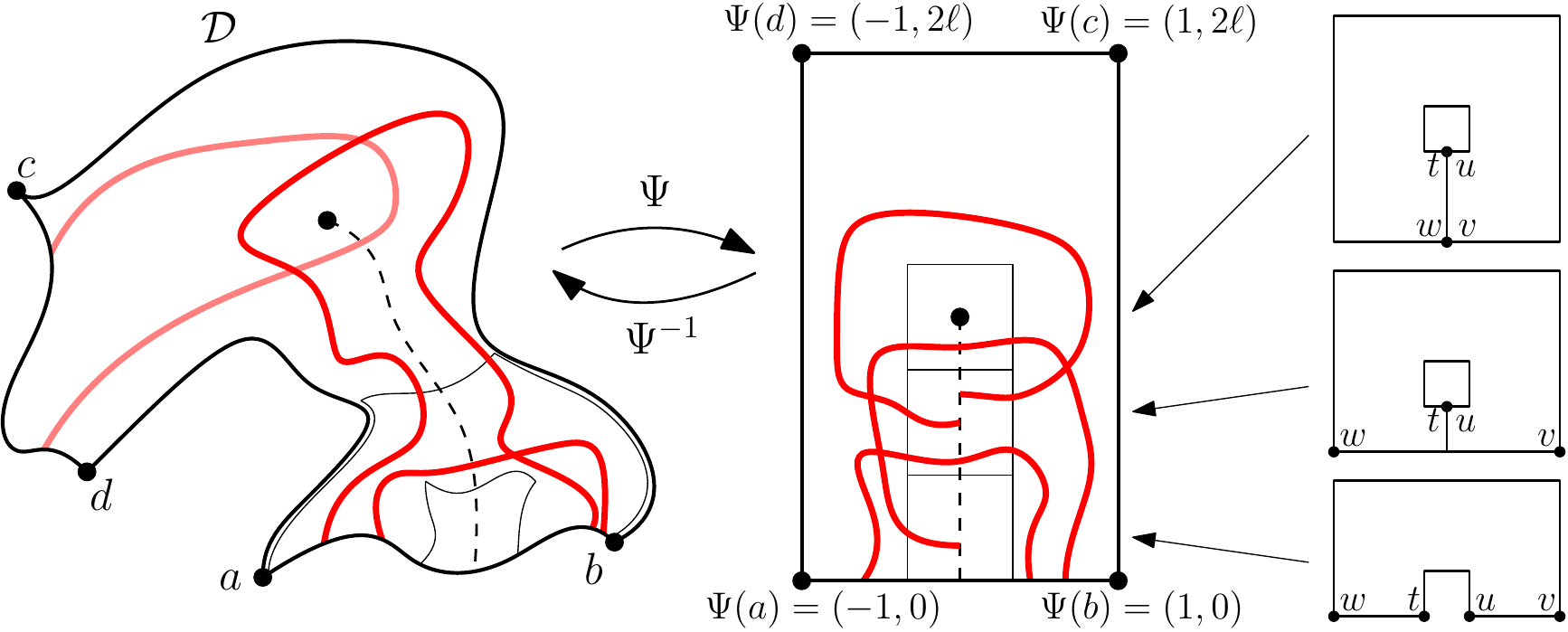}
	\caption{A domain $(\calD;a,b,c,d)$ is transformed by the conformal map $\Psi$ into the rectangle $[-1,1]\times [0,2\ell]$. 
	The domains $\calQ_{k\delta}$ (on the right) are used to pave the lower part of the rectangle; 
	if they all contain crossings, then the vertical line $\{0\}\times [0,\ell]$ is surrounded by an arc. 
	The same is true before the application of $\Psi$, that is in $\calD$ (left image). Finally, if $B$ and $T$ both occur, then $(ab)$ is connected to $(cd)$.}
	\label{fig:short_to_long}
	\end{center}
	\end{figure}
	
	We recommend to look at Fig.~\ref{fig:short_to_long} for the definitions coming next. Let $B$ be the event that there exists an open path in $\calD$ 
	with endpoints on $\Psi^{-1}([-1,0) \times \{0\})$ and $\Psi^{-1}((0,1] \times \{0\})$, respectively,  
	and which does not cross $\Psi^{-1}(\{0\}\times [0,\ell])$.
	If the events $A_h$ with $h = \delta, 2\delta,\dots, \ell$ occur simultaneously, then so does $B$ (this is easier to see after transformation by $\Psi$).
	By the FKG inequality~\eqref{eq:FKG} and the previous display,
	\begin{align*}
		\phi_\calD^0 [B]  \geq \prod_{k=1}^{\ell /\delta} \phi_\calD^0 [A_h] \geq \eta^{\ell/\delta}.
	\end{align*}
	Symmetrically, if $T$ is the event that $\calD$ contains a path connecting 
	$\Psi^{-1}([-1,0) \times \{2\ell\})$ and $\Psi^{-1}((0,1] \times \{2\ell\})$ and which avoids $\Psi^{-1}(\{0\}\times [\ell, 2\ell])$, 
	then we also have $\phi_\calD^0 [T]\geq \eta^{\ell/\delta}$.
	
	Finally, if $T$ and $B$ both occur, then $\calD$ contains a crossing from $(ab)$ to $(cd)$. 
	The FKG inequality~\eqref{eq:FKG} gives that
	$$ \phi_{\calD}^0[(ab)\longleftrightarrow(cd)] \geq \phi_\calD^0 [B\cap T]\ge \eta^{2\ell/\delta},$$
	which provides the desired conclusion with $\eta(M):= \eta^{2\ell/\delta}$.\hfill $\square$

\begin{remark}
	Note that the argument of this section also implies the following result, and here $q=4$ is not excluded. 
	For all $L > 0$, there exists $\eta(L) >0$ such that, for all~$1\le q\le 4$
	and~$(\calD, a,b,c,d)$ a discrete quad, 
	if~$\ell_\calD[(ab),(cd)] \le L$ then~
	\[\phi_{\calD}^{0/1}[\calC(\calD)] \geq\eta(L),\]
	where~$0/1$ denotes the boundary condition on~$\calD$ where the arcs~$(ab)$ and~$(cd)$ are wired and the rest of the boundary is free. 
	Indeed, a careful inspection of the proofs in this section shows the existence of 
	a family of annuli~$\Ann(x_i; r_i,2r_i) := \La_{2r_i}(x_i) \setminus  \La_{r_i}(x_i)$ with~$i = 1,\dots, k$ such that, 
	if each one contains and open path separating~$\La_{r_i}(x_i)$ from~$\La_{2r_i}(x_i)$, 
	then~$\calD$ is crossed from~$(ab)$ to~$(cd)$ by an open path 
	(for annuli intersecting~$\calD^c$, we ask for the existence of an open path in~$\calD \cap \Ann(x_i; r_i,2r_i)$ 
	that separates~$\La_{r_i}(x_i)$ from~$\La_{2r_i}(x_i)$ \emph{inside}~$\calD$). 
	Moreover,~$k$ is bounded in terms of~$\ell_\calD[(ab),(cd)]$ only and each  
	$\Ann(x_i; r_i,2r_i) \cap \calD$ intersect the boundary of~$\calD$ only along the wired arcs. 
	
	By crossing estimates from \cite{DumSidTas16} and \eqref{eq:CBC}, each annulus contains an open path separating the inside from outside with uniformly positive probability. Finally, by \eqref{eq:FKG} and the bound on~$k$, we conclude that the probability of~$\calC(\calD)$ may bounded in terms of ~$\ell_\calD[(ab),(cd)]$ only. 
\end{remark}

\section{First moment estimate: proof of Proposition~\ref{prop:10}}\label{sec:3}

The section is divided in three. 
We first show how the case for general $r$ follows from that with $r=0$. 
Then, we introduce the necessary background on parafermionic observables to prove the $r=0$ case. 
Finally, the last part is devoted to the proof of the $r=0$ case.

\subsection{Reduction to the case of $r=0$}

\begin{figure}
    \begin{center}
    	\includegraphics[width =0.4\textwidth]{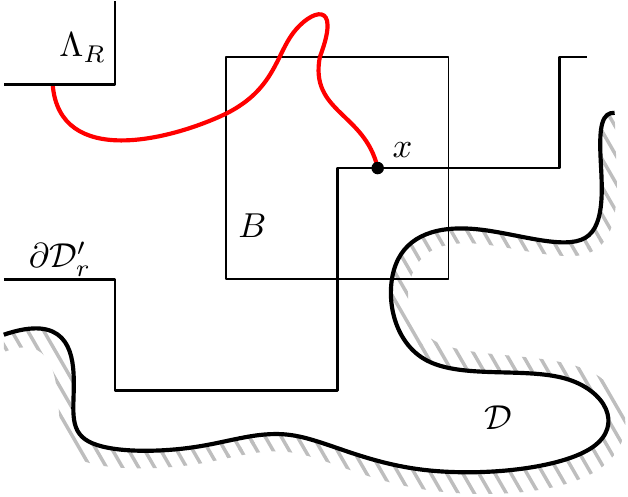}
    	\caption{When $x$ is connected to $\La_R$ inside $\calD_r'$, 
		then $B$ is connected to $\La_R$ and $x$ is connected to $\partial B$ inside $\calD_r'$. }
	    \label{fig:Bx}
    \end{center}
\end{figure}

Fix $R \geq r\ge1$ and let $\calD$ be a $R$-centred domain. 
By adapting the constant $c_4$ in~\eqref{eq:fund}, we may restrict our study to the case where $R/r$ is large enough; we make this assumption below.  
Let $\calD'_r$ be the connected component of the origin in the union of $r$-boxes included in $\calD$. 
Consider $x\in \partial\calD'_r$ and let $B = \La_r(y)$ be the $r$-box with center $y \in \partial\calD'_r$ closest to~$x$. 
When $R/r$ is large enough, $\La_R$ and $B$ do not intersect. 
Then, the comparison between boundary conditions~\eqref{eq:CBC} and the mixing property~\eqref{eq:mix2} give,
\begin{align*}
	\phi_{\calD'_r}^0[x\xleftrightarrow{\Lambda_{7R}}  \Lambda_R]
	&\le \phi_{B\cap\calD'_r}^{0/1}[x\longleftrightarrow \partial B\setminus \partial \calD'_r]\,\phi_{\calD'_r}^0[B\xleftrightarrow{\Lambda_{7R}}  \Lambda_R]\le C\pi_1^+(\|x - y\|)\phi_{\calD}^0[B\xleftrightarrow{\Lambda_{7R}}  \Lambda_R],
\end{align*}
where $\phi_{B\cap\calD'_r}^{0/1}$ denotes the measure on $B\cap\calD'_r$ with free boundary conditions on $\partial \calD'_r$ and wired on the rest of the boundary and $\|\cdot\|$ stands for the $L^\infty$-distance; see also Fig.~\ref{fig:Bx}.
Notice that any $B$ as above must intersect $\calD^c$ (otherwise its center would not lie on $\partial\calD'_r$).
By summing over all $x\in \partial\calD'_r$ we find
\begin{equation}\label{eq:hh4}
    \phi_{\calD_r'}^0[\mathbf M_0(\calD_r',R)] 
    = \sum_{x\in \partial\calD'_r}\phi_{\calD'_r}^0[x\xleftrightarrow{\Lambda_{7R}}  \Lambda_R]
    \leq C\sum_{k\le r/2}\pi_1^+(k) \sum_{B\cap \calD^c\ne\emptyset}\phi_{\calD}^0[B\xleftrightarrow{\Lambda_{7R}}  \Lambda_R].
\end{equation}
Apply now the case $r=0$ of Proposition~\ref{prop:10}  to the $R$-centred domain\footnote{Formally, $\calD'_r$ is not always $R$-centred, but is $R'$-centred for some $R'$ between $R/2$ and $R$; this suffices to apply Proposition~\ref{prop:10}.} $\calD'_r$ to bound the left-hand side from below by $c_4R\pi_1^+(R)$. 
Moreover, Lemma~\ref{lem:mn} bounds from above the first sum in the right-hand side by $r\pi_1^+(r)$. 
Dividing by the latter, we obtain~\eqref{eq:fund} for~$r \geq 1$. 

\subsection{Background on parafermionic observables}\label{sec:3.1}

The proof of Proposition~\ref{prop:10} relies heavily on parafermionic observables, that we define below.
 These definitions are now classical and we refer to \cite{Dum17a} for details. We also recommend that the reader looks at Fig.~\ref{fig:ab}.  

Let $\Omega=(V,E)$ be a discrete domain, let $a$ and $b$ be two vertices on $\partial \Omega$. The triplet $(\Omega,a,b)$ is called a {\em Dobrushin domain}. Orient $\partial \Omega$ in counterclockwise order. It is divided into two boundary arcs denoted by $(ab)$ and $(ba)$: the first one from $a$ to $b$ (excluding $a$ and $b$) and the second one from $b$ to $a$ (including the endpoints). 
The {\em Dobrushin boundary conditions} are defined to be free on $(ab)$ and wired on $(ba)$. 
Below, the measure on $(\Omega,a,b)$ with Dobrushin boundary conditions is denoted by $\phi^{0/1}_{\Omega}$.

Let $(\mathbb Z^2)^\star$ be the dual of $\mathbb Z^2$ (defined as the translate of $\mathbb Z^2$ by $(1/2,1/2)$). This way, each edge $e$  of $\mathbb Z^2$ is associated to a unique edge $e^\star$ (the one that crosses $e$) of $(\mathbb Z^2)^\star$. The dual $\Omega^\star=(V^\star,E^{\star})$ of the domain $\Omega=(V,E)$ is the subgraph of $(\mathbb Z^2)^\star$ spanned by $E^\star$, where $E^\star$ is the set of dual edges associated to $E'=E\setminus\{\text{edges of $(ba)$}\}$. 
Any configuration $\omega$ on $\Omega$ will be completed by open edges on $(ba)$ and closed edges on $\Omega^c$, which leads us to an identification between configurations $\omega$ on $\Omega'=(V,E')$ and dual configurations $\omega^\star$ on $\Omega^\star=(V^\star,E^\star)$.  Then, the dual configuration $\omega^*$ has the property that dual edges between vertices of $\partial\Omega^*$ that are bordering $(ab)$ are open (we call the set of such edges $(ab)^*$). See Fig.~\ref{fig:ab} for an illustration.

The loop representation of a configuration on $\Omega$ is supported on the {\em medial graph} of $\Omega$ defined as follows. 
Let $(\bbZ^2)^\diamond$ be the {\em medial} lattice, with vertex-set given by the midpoints of edges of $\bbZ^2$ and edges between pairs of nearest vertices (\emph{i.e.}~vertices at a distance $\sqrt 2/2$ of each other). It is a rotated and rescaled version of $\bbZ^2$.   
Let $\Omega^\diamond$ be the subgraph of $(\bbZ^2)^\diamond$ spanned by the edges of $(\bbZ^2)^\diamond$ adjacent to a face corresponding to a vertex of $\Omega\setminus (ba)$ or $\Omega^* \setminus (ab)^*$. 
Let $e_a$ and $e_b$ be the two medial edges entering and exiting $\Omega^\diamond$ between the arcs $(ba)$ and $(ab)^*$.

Let $\omega$ be a configuration on $\Omega$; recall its dual configuration $\omega^*$. 
Draw self-avoiding paths on $\Omega^\diamond$ as follows: a path arriving at a vertex of the medial lattice 
always takes a $\pm \pi/2$ turn at vertices so as not to cross the edges of $\omega$ or $\omega^*$. 
The loop configuration thus defined is formed of a path between $e_a$ and $e_b$ and disjoint loops; 
together these form a partition of the edges of $\Omega^\diamond$. 
We will not detail further the definition of the loop representation, rather direct the reader to Fig.~\ref{fig:ab} and \cite{Dum17a} and point out that
\begin{itemize}
\item any vertex of $\Omega^\diamond$ (with the exception of the endpoints of $e_a$ and $e_b$) is contained either in an edge of $\omega$ or an edge of $\omega^*$.
Therefore there is exactly one coherent way for the loop to turn at any vertex of $\Omega^\diamond$;
\item the edges of $\omega$ in $(ba)$ and the edges of $\omega^*$ in $(ab)^*$ are such that the loops, when reaching boundary vertices, turn so as to remain in $\Omega^\diamond$.
\end{itemize}
In the loop configuration, the self-avoiding curve with endpoints $e_a$ and $e_b$ is called the \emph{exploration path};
it is denoted by $\gamma=\gamma(\omega)$ and is oriented from $e_a$ to $e_b$. 
For an edge $e \in \gamma$, let $\text{W}_{\gamma}(e,e_b)$ be the winding of $\gamma$ between $e$ and $e_b$, 
that is $\pi/2$ times the number of left turns minus the number of right turns taken by $\gamma$ when going from $e$ to $e_b$.

\begin{definition}\label{def:parafermionic_observable}
    Consider a Dobrushin domain $(\Omega,a,b)$.  
     The {\em parafermionic observable} $F=F_{\Omega,a,b}$  is defined for any (medial) edge $e$ of $\Omega^\diamond$ by
    \begin{equation*}
      F(e) ~:=~\phi^{0/1}_{\Omega}[{\rm e}^{{\rm i}\sigma 
      \text{W}_{\gamma}(e,e_b)} \mathbf1_{e\in \gamma}],
    \end{equation*}
    where $\sigma\in[0,1]$ is the solution of the equation
    \begin{equation}\label{eq:hahaha}
    	\displaystyle  \sin (\sigma \tfrac\pi2) = \sqrt{q}/2.
    \end{equation}
\end{definition}

The parafermionic observable satisfies a very special property first observed in \cite{Smi10} (see also \cite[Thm.~5.16]{Dum17a}); 
it applies for any  $q>0$ when $p=\sqrt q /(1 + \sqrt q)$. 
For any Dobrushin domain $(\Omega,a,b)$ and any vertex $v$ of $\Omega^\diamond$ corresponding to an edge of $\Omega \setminus (ba)$,
\begin{equation}\label{rel_vertex}
	\sum_{i=1}^4 \eta(e_i) F(e_i) = F(e_1) - {\rm i}F(e_2) - F(e_3) + {\rm i}F(e_4) = 0,
\end{equation}
where $e_1$, $e_2$, $e_3$ and $e_4$ are the four edges incident to $v$, indexed in clockwise order,
and $\eta(e_i)$ is the complex number of norm one with same direction as $e_i$ and orientation from $v$ towards the other endpoint of $e_i$.
Write $\Int(\calC)$ for the set of vertices $v$ of the medial lattice which correspond to a primal edge of $\Omega \setminus (ba)$,
and $\calC$ for the set of medial edges of $\Omega^\diamond$ with exactly one endpoint in $\Int(\calC)$. 
Then, summing the relation above over all vertices $v \in \Int(\calC)$, we find
\begin{equation}\label{eq:rel_vertex}
	\sum_{e\in \calC}\eta(e)F(e)=0,
\end{equation}
where $\eta(e)$ is the complex number of norm one with direction given by $e$ and orientation from the endpoint of $e$ in $\Int(\calC)$ towards the outside. 

\begin{remark}
This relation should be understood as ``the contour integral of the parafermionic observable along the boundary of $\Omega^\diamond$ is 0''.
The careful reader may, however, notice that $\calC$ does not always form a closed curve (see Fig.~\ref{fig:ab}).
\end{remark}

\subsection{Proof of Proposition~\ref{prop:10} when $r=0$}\label{sec:r=0}

For technical reasons related to the parafermionic observable, we first provide an estimate in a geometry given by special Dobrushin domains. For $\ell,m\ge0$, call $(\Omega,a,b)$ a {\em $(m,\ell)$-corner} Dobrushin domain (see Fig.~\ref{fig:ab} for an illustration) if its boundary is made of 
\begin{itemize}[noitemsep]
\item the vertical segment between $(0,0)$ and $b:=(0,m)$,
\item the horizontal segment between $(0,0)$ and $a:=(\ell,0)$,
\item a self-avoiding curve $\gamma$ between $a$ and $b$ avoiding the previous two segments and going clockwise around $0$. 
\end{itemize}

\begin{figure}
\begin{center}
\includegraphics[width = 0.47\textwidth, page = 1]{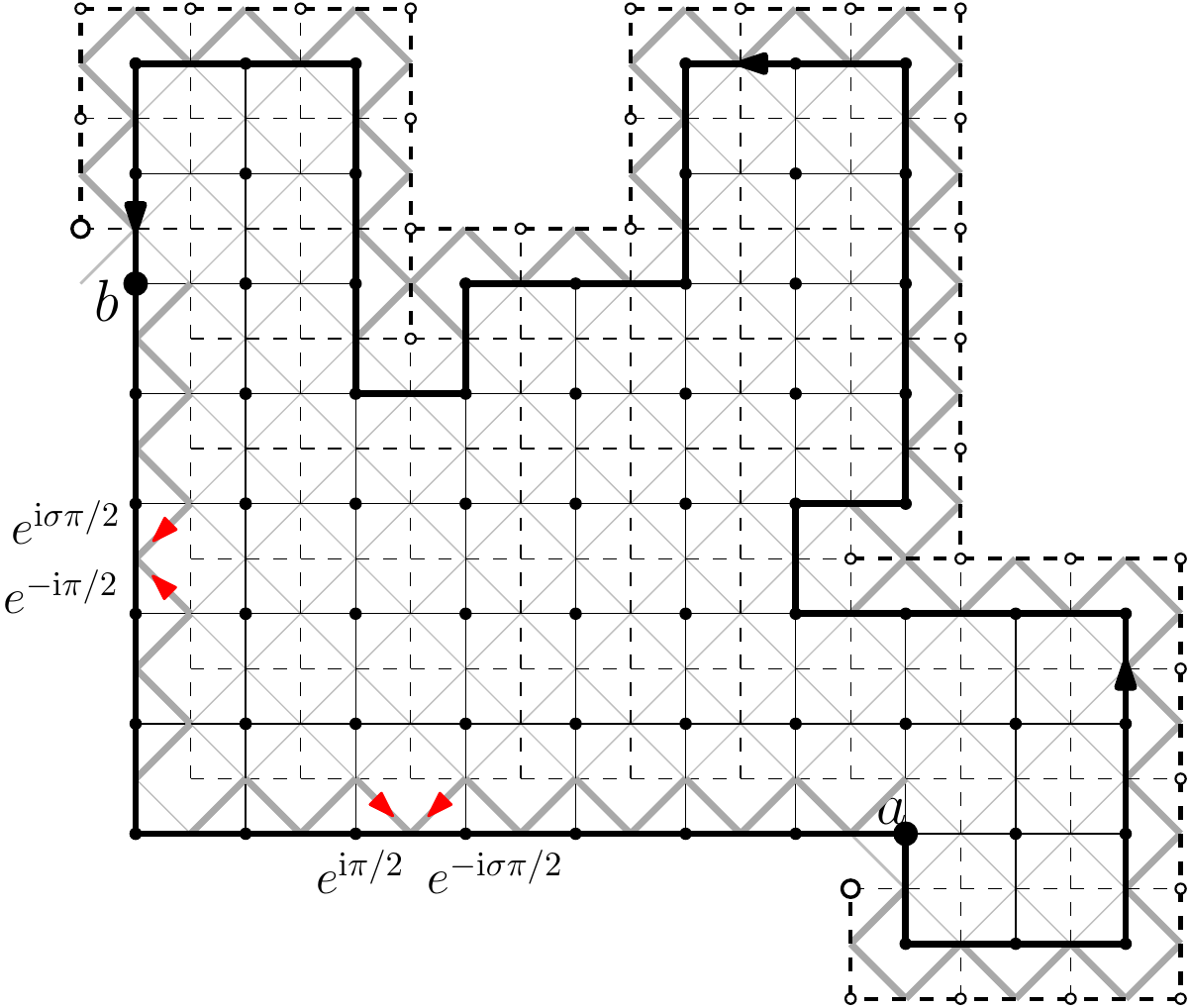}\quad
\includegraphics[width = 0.47\textwidth, page = 3]{ab2.pdf}
\caption{{\em Left:} A $(m,\ell)$-corner domain $\Omega$. The edges of $\Omega^\diamond$ are grey; those of $\alpha$ are bold.
{\em Right:} The loop representation of a configuration. Loops are drawn on the medial lattice $\Omega^\diamond$ so as not to intersect any open or dual-open edges. For a point $x \in (ab)$, the interface passes between this point and the free arc $(ab)^*$ if and only if $x\leftrightarrow (ba)$. The winding of a curve going from a medial edge
 adjacent to the primal arc $(ba)$ to $e_b$ is equal to $0$, $\pi/2$ or $\pi$.}
\label{fig:ab}
\end{center}
\end{figure}

\begin{lemma}\label{lem:op}
	There exists $c_5>0$ such that for every $(m,\ell)$-corner Dobrushin domain $\Omega$ with $m\ge\ell$, 
		 $$\sum_{x\in (ab)} \phi^{0/1}_\Omega[x\longleftrightarrow (ba)]\ge c_5\,m\pi_1^+(m).$$
\end{lemma}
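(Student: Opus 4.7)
The plan is to exploit the parafermionic observable $F = F_{\Omega, a, b}$ from Definition~\ref{def:parafermionic_observable} together with the discrete contour identity \eqref{eq:rel_vertex}, namely $\sum_{e \in \calC} \eta(e) F(e) = 0$. I decompose $\calC = \calC_V \sqcup \calC_H \sqcup \calC_F \sqcup \{e_a, e_b\}$, where $\calC_V, \calC_H, \calC_F$ are the boundary medial edges adjacent to the vertical wired segment (length $m$), the horizontal wired segment (length $\ell$), and the free outer curve $(ab)$, respectively. The strategy is to compute the wired contribution explicitly and force a matching lower bound on the free contribution.

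On $\calC_V$ and $\calC_H$, the exploration path $\gamma$ visits every adjacent medial edge with probability one (the adjacent primal vertex lies in the wired cluster $(ba)$), and the winding $W_\gamma(e, e_b)$ is deterministic since $\Omega$ is simply connected. This winding is constant along each straight segment and differs by $\pi/2$ between horizontal and vertical segments, reflecting the turn of $\gamma$ at the corner $(0,0)$. Pairing the two types of alternating boundary medial edges along each segment and tracking the orientations $\eta(e)$, a direct computation yields
\[
\sum_{e \in \calC_V} \eta(e) F(e) + \sum_{e \in \calC_H} \eta(e) F(e) = c\, e^{i\theta_0} \bigl(m + \ell\, e^{-i(\sigma+1)\pi/2}\bigr)
\]
for an explicit nonzero $c$ and some $\theta_0 \in \mathbb{R}$. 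Using $\cos((\sigma+1)\pi/2) = -\sin(\sigma\pi/2) = -\sqrt{q}/2$ from \eqref{eq:hahaha}, the squared modulus equals $|c|^2 (m^2 - m\ell\sqrt{q} + \ell^2)$, and completing the square gives $m^2 - m\ell\sqrt{q} + \ell^2 \geq m^2(4-q)/4$ for $0 \leq \ell \leq m$. This is strictly positive precisely because $q < 4$, which is the algebraic footprint of the hypothesis.

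On the free arc, the analogous analysis yields $|F(e_x)| = \phi^{0/1}_\Omega[x \leftrightarrow (ba)]$ for each boundary medial edge $e_x$ adjacent to $x \in (ab)$: the path visits $e_x$ exactly on this event, and conditionally the winding to $e_b$ is deterministic by simple connectedness. By the triangle inequality, $|\sum_{e \in \calC_F} \eta(e) F(e)| \leq 2 \sum_{x \in (ab)} \phi^{0/1}_\Omega[x \leftrightarrow (ba)]$, while $|F(e_a)|,|F(e_b)| \leq 1$. Substituting back into the contour identity yields
\[
\sum_{x \in (ab)} \phi^{0/1}_\Omega[x \leftrightarrow (ba)] \geq c' \sqrt{4-q}\, m - C.
\]
For $m$ above an absolute threshold (depending on $q$) the right-hand side exceeds $c_5\, m \geq c_5\, m\, \pi_1^+(m)$ since $\pi_1^+(m) \leq 1$; for $m$ below the threshold, $m\pi_1^+(m)$ is bounded and the bound follows from a single term using \eqref{eq:RSW} applied near the origin. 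The main obstacle is the wired-arc computation: one must carefully pair up the $\pm\pi/4$-oriented boundary medial edges along each straight segment, propagate the constant-on-each-segment winding across the corner, and verify that the resulting complex combination produces exactly the factor $\sqrt{4-q}$ dictated by \eqref{eq:hahaha}---the step at which the restriction $q < 4$ genuinely enters.
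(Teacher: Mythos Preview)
Your argument contains a genuine error in the treatment of the wired arc. You claim that on $\calC_V \cup \calC_H$ the exploration path $\gamma$ visits every boundary medial edge with probability one, reasoning that ``the adjacent primal vertex lies in the wired cluster $(ba)$''. This is not sufficient: $\gamma$ is the interface between the primal cluster of $(ba)$ and the dual cluster of $(ab)^*$, so it passes through a boundary medial edge $e \in \beta$ if and only if the adjacent \emph{dual} vertex $y$ is dual-connected to $(ab)^*$. That event has probability $\phi_\Omega^{0/1}[y \xleftrightarrow{*} (ab)^*]$, which is typically much smaller than~$1$. Consequently your computation $\sum_{e \in \calC_V \cup \calC_H} \eta(e)F(e) = c\,e^{i\theta_0}(m + \ell\,e^{-i(\sigma+1)\pi/2})$ is incorrect: each term must be weighted by this dual connection probability, and the resulting bound is not of order $m$ but of order $m\,\pi_1^+(m)$.

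This is precisely where the factor $\pi_1^+(m)$ in the statement comes from, and it cannot be removed. The paper's proof handles this correctly: after pairing edges along the wired arc (obtaining the factor $\cos[(1+\sigma)\tfrac\pi4] > 0$, which encodes $q<4$), it bounds $|\sum_{e\in\beta}\eta(e)F(e)|$ from below by a constant times $\sum_{y\in(ba)^*}\phi_\Omega^{0/1}[y\xleftrightarrow{*}(ab)^*]$, and then invokes \eqref{eq:wRSW} and \eqref{eq:mix2} to show that for dual vertices $y$ at distance of order $m$ from both corners this probability is at least $c\,\pi_1^+(m)$. Your treatment of the free arc and of $e_a,e_b$ is fine; the gap is entirely in the wired-arc contribution.
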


\begin{proof}
Consider the parafermionic observable on $(\Omega,a,b)$. Note that the edges of $\calC$ are of three kinds:
    \begin{itemize}[noitemsep]
    \item the medial edges $e_a$ and $e_b$;
    \item the medial edges of $\calC$ incident to $(ab)^*$, we call the set of such edges $\alpha$; 
    \item the medial edges of $\calC$ incident to $(ba)$, we call the set of such edges $\beta$.
    \end{itemize} 
   Equation~\eqref{eq:rel_vertex} applied to $\Omega$ implies that 
    \begin{align*}	    \big|\sum_{e\in \alpha}\eta(e)F(e)\big| 
	    = \big|\eta(e_a)F(e_a)+\eta(e_b)F(e_b)+\sum_{e\in \beta}\eta(e)F(e)\big|.
    \end{align*}
    Notice that, for any medial edge $e \in \alpha$, if the interface passes through $e$, then the vertex $x \in (ab)$ which is adjacent to $e$ 
	is connected inside $\Omega$ to the arc $(ba)$ (see also Fig.~\ref{fig:ab}). 
	Since there are at most two edges of $\alpha$ adjacent to any one vertex of $(ab)$, 
	we deduce that 
    \begin{align}\label{eq:h11}
    	2 \sum_{x\in(ab)}\phi_\Omega^{0/1}[x\longleftrightarrow (ba)]\ge  \sum_{e\in \alpha}|F(e)|\ge \big|\sum_{e\in\beta}\eta(e)F(e)\big|-2,
	\end{align}
	where the second inequality uses the triangular inequality and the fact that $|F(e_a)|=|F(e_b)|=1$

	Similarly, for any edge $e \in \beta$, 
    $\gamma$ passes through $e$ if and only if 
    the unique vertex $y \in \Omega^*$ adjacent to $e$ is connected by a dual-open path to $(ab)^*$ in $\Omega^*$;
    write $y\xleftrightarrow{*} (ab)^*$ for the latter event. 
	When $\gamma$ contributes to $\eta(e) F(e)$, the argument of its contribution is determined by the orientation of $e$.
    There are four possible arguments: $e^{{\rm i}\sigma \pi/2}$ and $e^{-{\rm i} \pi/2}$ for edges on the vertical section of $(ba)$ 
    and $e^{-{\rm i}\sigma \pi/2}$ and $e^{{\rm i} \pi/2}$ for edges on the horizontal section of $(ba)$ 
    (up to a fixed phase that depends on the geometry of $\Omega$ around $b$). 
    Additionally, observe that any edge $e$ whose contribution has argument $e^{\pm{\rm i} \pi/2}$ 
    may be paired with the edge $f \in \beta$ to the right or above it, 
    which has contribution of same absolute value as that of $e$, and argument $e^{\mp{\rm i}\sigma \pi/2}$.	
    Indeed, $\gamma$ passes through $e$ if and only if it also passes through $f$. 
    Thus,
    \begin{equation}\label{eq:h12}
    	\big|\sum_{e\in\beta}\eta(e)F(e)\big| \geq  \cos \big[(1+\sigma)\tfrac{\pi}{4}\big]\sum_{y\in(ba)^*}\phi_\Omega^{0/1}[y\xleftrightarrow{*}(ab)^*].
    \end{equation}
      
    By self-duality, the crossing estimates~\eqref{eq:wRSW}, and the mixing property~\eqref{eq:mix2}, we may deduce that for every $y\in(ab)^*$ between distance $m/3$ and $2m/3$ of $b$, 
    \begin{equation}\label{eq:h13}
    \phi_\Omega^{0/1}[y\xleftrightarrow{*}    (ab)^*]\ge c\pi_1^+(m)
    \end{equation}
	for some constant $c>0$ independent of $m$.  
	Putting~\eqref{eq:h11}--\eqref{eq:h13} together, we find that 
    \begin{align*}
    	\sum_{x\in(ab)}\phi_\Omega^{0/1}[x\longleftrightarrow (ba)] 
	 	\geq \tfrac12 \cos \big[(1+\sigma)\tfrac{\pi}{4}\big] \sum_{y\in(ba)^*}\phi_\Omega^{0/1}[y\xleftrightarrow{*}    (ab)^*] - 1
		\geq c' m\pi_1^+(m),
	\end{align*}
	for some $c' > 0$. In the last inequality, we used that $\cos \big[(1+\sigma)\tfrac{\pi}{4}\big]>0$ when $q < 4$ (see~\eqref{eq:hahaha}) and that 
	$m\pi_1^+(m)$ tends to $\infty$ (see Lemma~\ref{lem:mn}).
\end{proof}

We now proceed with the proof of Proposition~\ref{prop:10} in the case $r = 0$, which is slightly technical. Throughout this proof, we will assume $R$ to be large; small values of $R$ may be incorporated by adjusting the constant in~\eqref{eq:fund}.  Fix a small quantity $\delta > 0$ (we will see below how small $\delta$ needs to be, and that it does not depend on $R$) and assume for simplicity that $\delta R$ and $\delta^2 R$ are integers (we may do this as we may take $R$ large and adjust $\delta$). In the proof below, it will be important to keep track of the dependencies in $\delta$: the constants $c,c',C,C'$, etc are independent of $\delta$, while $c(\delta), c'(\delta)$, etc do depend on $\delta$.

The idea is to construct a $(m,\ell)$-corner Dobrushin domain $(\Omega,a,b)$ associated with each $R$-centred domain $\calD$ in order to apply the previous lemma. We refer to Fig.~\ref{fig:domain} for an illustration of the construction. 

\begin{proof}[Proposition~\ref{prop:10} for $r=0$]
Fix a $R$-centred domain $\calD$ and consider the largest Euclidean open ball $B$ centred at the origin and which is included in $\calD$. Let $x=(x_1,x_2)$ be a vertex of $\partial\calD\cap\partial B$ and assume with no loss of generality that $x$ is in the wedge $\{(u,v):u\ge v\ge 0\}$. Let $y = (y_1,y_2) $ be the rightmost vertex of the half-line $\bbZ_+ \times \{x_2 + \delta R\}$ that is contained in $B$. Finally, let $\tau$ be the translation mapping $x$ to $y$.

\begin{definition}[corner domain associated with $\calD$]
Let $a:=x-(8\delta^2 R,0)$, $b:=\tau(a)$ and 
$\mathrm{Rect}$ be the rectangle with edges parallel to the axis, top-left corner $b$ and bottom-right corner $x$. 
Finally, let $\Omega$ be the connected component of $a$ in the subgraph $(\calD\cap \Lambda_{7R})\cap\tau(\calD\cap \Lambda_{7R})\cap (\mathrm{Rect} \cup B^c)$; see the shaded region in Fig.~\ref{fig:domain}.
\end{definition}
\begin{figure}
\begin{center}
\includegraphics[width = 0.51\textwidth]{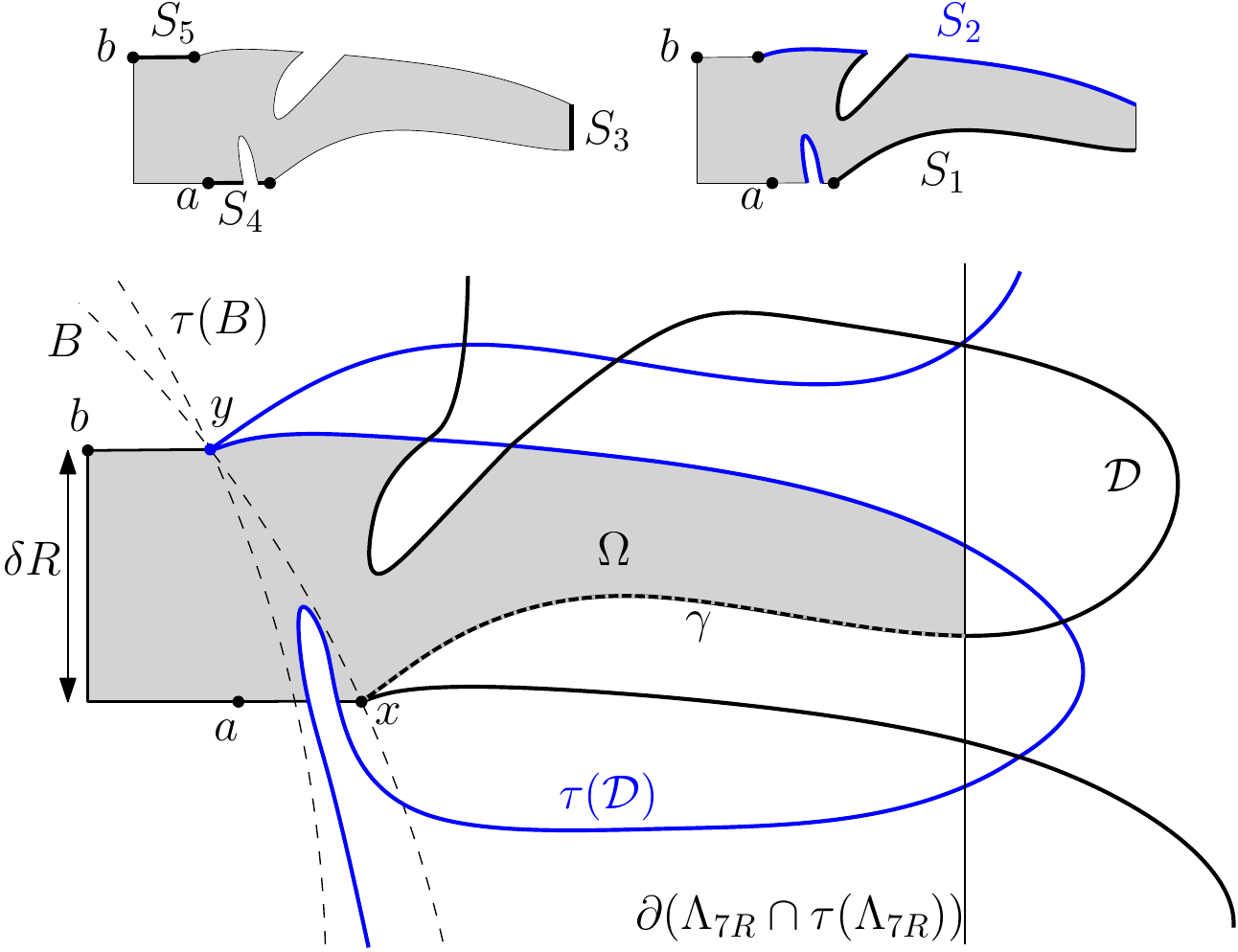}\,\,
\includegraphics[width = 0.47\textwidth]{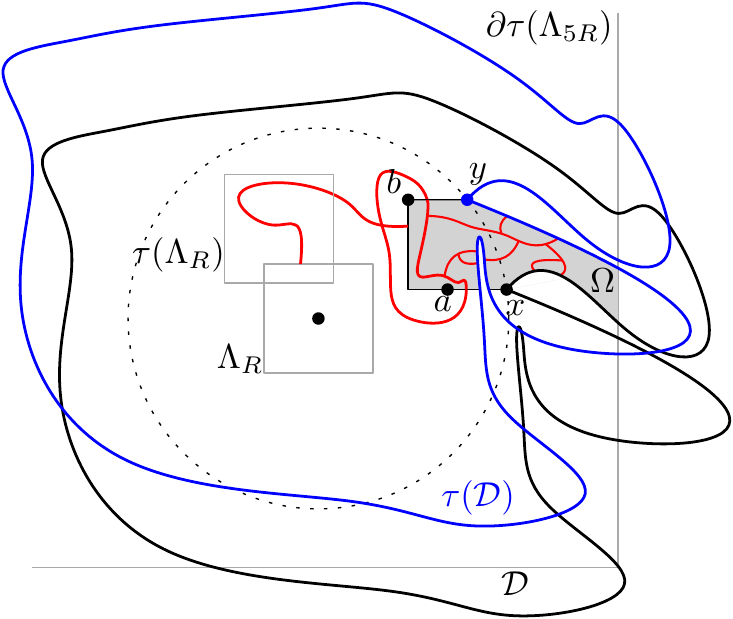}
\caption{{\em Left:} The domain $\Omega$; part of its boundary is contained in $\partial \calD$ (black) or in $\partial\tau(\calD)$ (blue). The arc $(ab)$ is divided into five sets $S_1,\dots, S_5$ as described above. 
{\em Right:} When the event $E$ occurs, the measure induced inside $\Omega$ dominates that with Dobrushin boundary conditions. Then,~\eqref{eq:k5} provides a bound on the number of points connected to $\La_{R}\cap\tau(\La_{R})$.}
\label{fig:domain} 
\end{center}
\end{figure}

Assuming $\delta > 0$ is small enough, a simple trigonometric computation shows that the distance between $x$ and $y$ is at most $2\delta R$. 
This further shows that the distance along the horizontal line passing through $x$ between $x$ 
and the translated ball $\tau(B)$ is smaller than $4\delta^2 R$.
In particular, $a$ is then contained in $B \cap \tau(B)$, and so is $b$. 
Thus, if we assume $\delta$ to be small and $R$ large enough, 
the whole arc $(ba)$ of the boundary of $\mathrm{Rect}$ is also contained in $B \cap \tau(B)$, hence it is part of~$\partial \Omega$. 
As such $(\Omega, a,b)$ is the translate of a $(m ,\ell)$-corner Dobrushin domain with $m = \delta R$ and $\ell:= x_1 - y_1$. 
Lemma~\ref{lem:op} applied to $(\Omega, a,b)$ gives
\begin{equation}\label{eq:kkkk}
	\sum_{z \in (ab)} \phi^{0/1}_\Omega[z\longleftrightarrow (ba)]\ge c_5\delta R\,\pi_1^+(\delta R).
\end{equation} 
We are now going to harvest this inequality by splitting the boundary arc $(ab)$ into five types of vertices and estimating the contribution of each of them in order to get a more useful inequality, namely~\eqref{eq:k5} below. Divide $(ab)$ into five sets: 
\begin{itemize}[noitemsep]
\item[$S_1=$]  vertices of $\partial \calD$;
\item[$S_2=$]  vertices of $\tau(\partial \calD)$;
\item[$S_3=$]  vertices of $\partial (\La_{7R} \cap \tau(\La_{7R}))$;
\item[$S_4=$]  vertices of the horizontal segment between $a$ and $x$ -- call this set $[ax]$;
\item[$S_5=$]  vertices of the horizontal segment between $y$ and $b$ -- call this set $[yb]$.
\end{itemize} 
Next, we analyse the contribution of each of the sets $(S_i)_{i=1\dots5}$ to the right-hand side of~\eqref{eq:kkkk}. 
Our goal is to show that $S_1$ and $S_2$ contribute significantly to~\eqref{eq:kkkk},
and thus that $S_3$, $S_4$ and $S_5$ contribute only partially. This will be valid for $\delta> 0$ small enough, but independent of $N$. 
\smallskip 

\noindent
{\em Contribution  of $S_4\cup S_5$}. 
There are at most $16 \delta^2 R$ vertices in $S_4\cup S_5$ and
the crossing estimates~\eqref{eq:wRSW}, the mixing property~\eqref{eq:mix2} and Lemma~\ref{lem:mn} give that 
\begin{equation}\label{eq:kkk}
	\sum_{z\in S_4 \cup S_5}\phi^{0/1}_\Omega[z\longleftrightarrow (ba)]
	\le C\,\delta^2 R\,\pi_1^+(\delta^2 R).
\end{equation}
Choosing $\delta > 0$ small enough, we may suppose that the contribution of these vertices is smaller than a quarter of the right-hand side of~\eqref{eq:kkkk}.
\bigbreak

\noindent {\em Contribution  of $S_3$}.
Any open path linking a vertex of $S_3$ to $(ba)$ needs to traverse a long thin corridor with free boundary conditions on its sides, something which occurs with very small probability. Formalising this is technical, but not surprising. 

For any vertex $z \in S_3$, the mixing property~\eqref{eq:mix2} gives
\begin{align*}
	\phi^{0/1}_\Omega[z\longleftrightarrow (ba)]
	\le C' \pi_1^+(\delta R) \,\phi^{0/1}_\Omega[(ba)\longleftrightarrow\partial\Lambda_{6R}].
\end{align*} 
Let us bound from above the last term on the right-hand side. 
For this term to be positive, $\Omega$ needs to contain vertices of $\partial\Lambda_{6R}$, hence we may restrict ourselves to this case. 

Recall that $x$ is the closest point of $\partial \calD$ to $0$ in euclidian distance, and that we assumed that $\La_{3R} \nsubset\calD$.
Thus $x$ is contained in $\La_{3\sqrt2 R} \subset \La_{5R}$.
Moreover $\Omega \subset \calD$, and therefore $\partial\calD$ does intersect $\partial \La_{6R}$.
Let $\tilde x$ be the first vertex of $\partial \La_{6R}$ when going around $\partial \calD$ in counter-clockwise order starting from $x$;
let $\gamma$ be the arc of  $\partial \calD$ between $x$ and $\tilde x$ (see Fig.~\ref{fig:domain}).
Then $\gamma$ has length at least $R$.
Choose a family of points $x_1,\dots,x_s$ on $\gamma$, at a distance at least $10\delta R$ from each other, from $\partial \La_{6R}$ and from ${\rm Rect}$. Due to the length of $\gamma$, one may choose $s \geq c / \delta$ for some small constant $c > 0$. 

Notice that, for any $1 \leq j \leq s$, any circuit of dual edges contained in $\La_{5\delta R }(x_j)$ and surrounding both $x_j$ and $\tau(x_j)$ separates $(ba)$ from $\partial \La_{6R}$ inside $\Omega$. 
The crossing estimates~\eqref{eq:wRSW}, the fact that $\tau$ is a translation by at most $2\delta R$ and the comparison of boundary conditions~\eqref{eq:CBC} imply the existence of a universal positive constant 
that bounds from below the $\phi^{0/1}_\Omega$-probability of existence of a dual-open path contained in $\La_{5\delta R }(x_j) \cap \Omega$ that 
separates $(ba)$ from $\partial \La_{6R}$ inside $\Omega$ for each $j = 1,\dots, s$. 
The mixing property~\eqref{eq:mix2} and the lower bound on $s$ yield 
\begin{align*}
	\phi^{0/1}_\Omega[(ba)\longleftrightarrow\partial\Lambda_{6R}]
	\leq e^{-c'/\delta}
\end{align*}
for some $c' > 0$. 
In conclusion 
\begin{align*}
	\sum_{z  \in S_3}	\phi^{0/1}_\Omega[z\longleftrightarrow (ba)]
	\le C'' e^{-c'/\delta} R \,\pi_1^+(\delta R),
\end{align*}
where the factor $R$ is an upper bound comes from the number of terms in the sum (recall that $S_3 \subset \partial \La_{7R} \cup \partial \tau(\La_{7R})$).
By choosing $\delta$ smaller than some universal constant, the above may be rendered smaller than a quarter of the right-hand side of~\eqref{eq:kkkk}.

\bigbreak\noindent
{\em Contribution  of $S_1$ and $S_2$}.
Overall, considering the bounds on the contributions of vertices in $S_3\cup S_4\cup S_5$, we find that~\eqref{eq:kkkk} implies
\begin{equation}\label{eq:k5}
	\sum_{z\in S_1 \cup S_2}\phi^{0/1}_\Omega[z\longleftrightarrow (ba)]
	\geq \tfrac12 c_5\, \delta R \, \pi_1^+(\delta R)\ge c(\delta) R\pi_1^+(R).
\end{equation}
\bigbreak
We are now in a position to conclude the proof. We have a large $\phi^{0/1}_\Omega$-expectation of the number of $z$ on the boundary that are connected to $(ba)$ and we now need to convert it to an estimate on the $\phi^0_\calD$-expectation of the number of $z$ on the boundary that are connected to $\Lambda_R$.

Consider the event $E$ that there exists an open circuit surrounding $(ba)$ in $B\cap\tau (B)$, 
and an open path from $\Lambda_{R}\cap \tau(\Lambda_{R})$ to $(ba)$ in $B\cap\tau (B)\setminus \mathrm{Rect}$. 
Recall that, by construction, the arc $(ba)$ is at a distance at least $\delta^2 R$ from $[B\cap\tau (B)]^c$, and therefore of $[\calD\cap\tau(\calD)]^c$. Set $\calD':=(\calD\cap \Lambda_{7R})\cap\tau(\calD\cap \Lambda_{7R})$.
Using the FKG inequality~\eqref{eq:FKG} and the crossing estimates ~\eqref{eq:wRSW}, we find that
\begin{equation}\label{eq:k6}
	\phi_{\calD'}^0[E]\ge c'(\delta)>0.
\end{equation}
If $E$ occurs, let $\Gamma$ be the inner-most open circuit surrounding $(ba)$ in $B\cap\tau (B)$,
and let~$\Omega'$ be the set of edges of $\Omega$ that lie outside $\Gamma$; notice that $\Gamma$ may be explored from inside and that, by the definition of $E$, $\Gamma$ is connected to $\Lambda_{R}\cap \tau(\Lambda_{R})$.
By ~\eqref{eq:CBC} and~\eqref{eq:SMP}, conditioning $\phi_{\calD'}^0$ on $E$ and on the realisation of $\Gamma$ induces
a measure on $\Omega'$ that dominates $\phi^{0/1}_\Omega$. Thus,~\eqref{eq:k5} and~\eqref{eq:k6} together give
\begin{equation*}
	\sum_{z\in  S_1 \cup S_2} \phi^{0}_{\calD'}[z \xleftrightarrow{\calD'} \Lambda_R\cap \tau(\Lambda_R)] 
	\ge c''(\delta) R\pi_1^+(R).
\end{equation*}
Observe that $\phi^{0}_{\calD'}$ is dominated by both $\phi^{0}_{\calD}$ and $\phi^{0}_{\tau(\calD)}$
and that $S_1 \subset \partial \calD \cap \La_{7R}$ and $S_2 \subset \tau(\partial \calD \cap \La_{7R})$.
Thus, the above implies 
\begin{equation*}
	\sum_{z\in  \partial \calD} \phi^{0}_{\calD}[z \xleftrightarrow{\Lambda_{7R}}   \Lambda_R\cap\tau(\Lambda_R)] 
	+ 	\sum_{z\in  \tau(\partial \calD)} \phi^{0}_{\tau(\calD)}[z \xleftrightarrow{\tau(\Lambda_{7R})}   \Lambda_R\cap \tau(\Lambda_R)]
	\ge c''(\delta)R\pi_1^+(R).
\end{equation*}
We conclude by observing that both terms are smaller than $\sum_{z\in  \partial \calD } \phi^{0}_{\calD}[z \xleftrightarrow{\Lambda_{7R}}\Lambda_R]$.
\end{proof}

\section{Applications to arm events}\label{sec:4}

In this section, we gather  results concerning arm-events. Section~\ref{sec:5.1a} proves a lower bound for the probability of the one arm event in the half-space (Lemma~\ref{lem:mn}), which is necessary for the proof of Theorem~\ref{thm:RSWquads}. The next sections contain applications of Theorem~\ref{thm:RSWquads} to other arm events. 

For $r \leq R$ consider the annulus $\La_R\setminus \La_r$ with inner boundary $\partial \La_r$ and outer boundary $\partial \La_R$. 
A self-avoiding path of $\bbZ^2$ or $(\bbZ^2)^*$ connecting the inner to the outer boundaries of the annulus is called an {\em arm}. 
We say that an arm is {\em of type $1$} if it is composed of primal edges that are all open, and {\em of type $0$} if it is composed of dual edges that are all dual-open. 
For $k\ge1$ and $\sigma\in\{0,1\}^k$\,, define $A_{\sigma}(r,R)$ to be the event that there exist $k$ \emph{disjoint} arms from $\partial\Lambda_r$ to $\partial\Lambda_R$ which are of type $\sigma_1,\dots, \sigma_k$, when indexed in counterclockwise order. 
To avoid annuli with inner radii too small for arm events to occur, define $r_\sigma$ be the smallest $r$ such that $A_{\sigma}(r,R)$ is non-empty for every $R\ge r$.
We also introduce $A_{\sigma}^+(r,R)$ to be the same event as $A_{\sigma}(r,R)$, except that the paths must lie in the upper half-plane $\mathbb H$ and are indexed starting from the right-most. 

\subsection{Proof of Lemma~\ref{lem:mn}}\label{sec:4.1}\label{sec:5.1a}

We  insist on the fact that this part  only relies on the crossing estimates~\eqref{eq:wRSW}, not on Theorem~\ref{thm:sRSW}. 
With the notation of this section, we have $\pi_1^+(R)=\phi_{\bbH}^0[A^+_1(0,R)]$, and we will use the latter notation in this part.

Let $E_r$ be the event that $\Lambda_{2r}\setminus\Lambda_{r}$ contains an open path from $\partial\bbH$ to itself disconnecting 0 from infinity in $\bbH$.
Combining crossings in three rectangles, the FKG inequality together with the crossing estimates~\eqref{eq:wRSW} give that $\phi_{\bbH}^0[E_r]\ge c$ for every $r\ge1$. As a consequence,~\eqref{eq:FKG} implies that 
\begin{equation*}
  	\frac{\phi_{\bbH}^0[A^+_1(0,R)]}{\phi_{\bbH}^0[A^+_1(0,r)]}
	\ge \phi_\bbH^0[E_{r/2}]\phi_\bbH^0[E_{r}]\phi_\bbH^0[\Lambda_{r/2} \longleftrightarrow\partial\Lambda_{2r}]\phi_{\bbH}^0[A^+_1(r,R)]
	\ge c'\phi_{\bbH}^0[A^+_1(r,R)],
\end{equation*} 
so that we may focus on bounding the right-hand side from below.

For $|s|\le R/(2r)$, let $F_s$ be the event that there exist a path in $\omega$ and a path in $\omega^*$ going from the translate by $sr$ of $\Lambda_r$ to $\partial\Lambda_R$.  Then, if $[0,R/2]\times[0,R]$ is crossed vertically by a path in $\omega$, 
and $[-R/2,0]\times[0,R]$ is crossed vertically by a path in $\omega^*$, at least one event $F_s$ occurs. 
Moreover, due to~\eqref{eq:wRSW}, the two crossings mentioned above occur simultaneously with probability at least $c''>0$.
The FKG inequality~\eqref{eq:FKG} and the union bound give
\begin{equation}\label{eq:hhg}
	\phi_{\bbH}^0[A^+_1(r,R)]\phi_{\bbH}^0[A^+_0(r,R)]\ge \phi_{\bbH}^0[A^+_{10}(r,R)]\ge\max_s\phi_{\bbH}^0[F_s]\ge c'''\tfrac rR.
\end{equation}
Successive applications of the bound on the probability of $E_{2^k}$ with $r\le 2^k< R/2$ give
\begin{equation}\label{eq:hhgg}
	\phi_{\bbH}^0[A^+_0(r,R)]\le (1-c)^{\lfloor \log[R/(2r)]\rfloor}.
\end{equation}
Dividing~\eqref{eq:hhg} by~\eqref{eq:hhgg} concludes the proof.

\subsection{Quasi-multiplicativity, localization and well-separation}\label{sec:4.2}

Let us start with the classical notion of {\em well-separated arms}. In what is next, let $x_i$ and $x'_i$ be the end-points of the arm $\gamma_j$ on the inner and outer boundary of $\Lambda_R\setminus\Lambda_r$ respectively.
\begin{definition} Fix $\delta>0$. The arms $\gamma_1,\dots,\gamma_k$ are said to be $\delta$-\emph{well-separated} if
\begin{itemize}[noitemsep]
\item $x_1,\dots,x_k$ are at a distance larger than $2\delta r$ from each other.
\item $x_1',\dots,x_k'$ are at a distance larger than $2\delta R$ from each other.
\item For every $1\le i\le k$, $x_i$ is $\sigma_i$-connected to distance $\delta r$ of $\partial\Lambda_r$ in $\Lambda_{\delta r}(x_i)$.
\item For every $1\le i\le k$, $x_i'$ is $\sigma_i$-connected to distance $\delta R$ of $\partial\Lambda_R$ in $\Lambda_{\delta R}(x_i')$.
\end{itemize}
\end{definition}
Let $A_{\sigma}^{\rm sep}(r,R)$ be the event
that $A_{\sigma}(r,R)$ occurs and there exist arms realizing
$A_{\sigma}(r,R)$ which are $\delta$-well-separated. While it is not explicit in the notation, $A_{\sigma}^{\rm sep}(r,R)$ depends on~$\delta$.

\begin{proposition}[Well-separation]\label{prop:separation}
Fix $1\le q<4$ and $\sigma \in \{0,1\}^k$ for some $k$. Then for all $\delta >0$ small enough, there exists $c_6=c_6(\sigma,\delta,q)>0$ such that for every $R\ge r\ge r_\sigma$,
\begin{equation}
c_6 \phi_{\bbZ^2}[A_{\sigma}(r,R)]\le \phi_{\bbZ^2}[A_{\sigma}^{\rm sep}(r,R)]\le  \phi_{\bbZ^2}[A_{\sigma}(r,R)].
\end{equation}
\end{proposition}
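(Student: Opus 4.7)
The upper bound is immediate from the inclusion $A_\sigma^{\rm sep}(r,R)\subset A_\sigma(r,R)$, so my task is the lower bound. My plan is to adapt the classical Kesten--Nolin strategy, in the form used by Chelkak--Duminil-Copin--Hongler for the FK-Ising case, but with Theorem~\ref{thm:sRSW} replacing the ad-hoc tools that were previously available only for $q\in\{1,2\}$ or for alternating arms with $k$ even. The argument breaks into three stages.

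First, I would perform a pigeonhole reduction to a well-chosen landing pattern. Partition $\partial\Lambda_r$ into $N=O(\delta^{-1})$ arcs of length of order $\delta r$, and $\partial\Lambda_R$ into arcs of length of order $\delta R$. For each pair $(\underline I,\underline J)$ of ordered $k$-tuples of arcs, let $A_\sigma^{\underline I,\underline J}(r,R)$ be the sub-event of $A_\sigma(r,R)$ on which some realisation of the arm event has its $i$-th pair of endpoints lying in $(I_i,J_i)$, in counterclockwise order. Since there are at most $N^{2k}$ such patterns, and since patterns with pairwise arc-distance $\ge 2\delta r$ (inside) and $\ge 2\delta R$ (outside) form a fixed positive fraction of them, pigeonhole yields a ``separated'' pattern $(\underline I^*,\underline J^*)$ with $\phi_{\bbZ^2}[A_\sigma^{\underline I^*,\underline J^*}(r,R)]\ge c(\delta,k)\phi_{\bbZ^2}[A_\sigma(r,R)]$.

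Second, on $A_\sigma^{\underline I^*,\underline J^*}(r,R)$ I would select, in a measurable fashion, one extremal arm $\gamma_i$ of type $\sigma_i$ for each $i$ (for instance the clockwise-most arm of that type starting in $I^*_i$ and ending in $J^*_i$), and explore the complement of these arms in a prescribed order. This exploration reveals a random set of edges $\calF$ while leaving untouched small boxes $H_i^{\rm in}\subset\Lambda_{\delta r/4}(x_i)$ and $H_i^{\rm out}\subset\Lambda_{\delta R/4}(x_i')$ on the correct sides of $\partial\Lambda_r$ and $\partial\Lambda_R$. The arc separation guarantees that all $2k$ boxes are pairwise disjoint and disjoint from $\calF$, and the ``local connection'' conditions in the definition of $\delta$-well-separation reduce to $2k$ crossings (of type $\sigma_i$ or its planar dual) of discrete quads of extremal length bounded in terms of $\delta$ only.

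Third, and crucially, I would invoke Theorem~\ref{thm:sRSW}. Given $\calF$, the conditional measure on each $H_i^{\rm in/out}$ is a random-cluster measure with some boundary condition $\xi_i$ that can be arbitrarily rough, but the theorem provides a lower bound $\eta(\delta)>0$ on each of these crossing probabilities, \emph{uniformly in} $\xi_i$ (using self-duality of the random-cluster model for $\sigma_i=0$). Since the $H_i^{\rm in/out}$ are pairwise disjoint and lie outside $\calF$, the $2k$ crossings are conditionally independent given $\calF$, so they all occur simultaneously with conditional probability at least $\eta(\delta)^{2k}$. Combining this with Stage~1 yields the lower bound with $c_6=c(\delta,k)\eta(\delta)^{2k}$. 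The heart of the matter, and the main obstacle the proof must overcome, lies precisely in Stage~3: for general $\sigma$ and $q\in(1,4)\setminus\{2\}$, no prior technique controlled crossing probabilities uniformly over the fractal boundary conditions induced by the arm exploration—Theorem~\ref{thm:sRSW} is exactly the tool that removes this barrier.
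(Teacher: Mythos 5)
Your Stage~3 correctly identifies the new tool that makes this work for general $q\in[1,4)$: Theorem~\ref{thm:sRSW} supplies crossing estimates for quads with arbitrary (possibly fractal) boundary conditions, and this is precisely what CheDumHon13's proof needed as an input when adapted beyond $q=2$. The paper's own proof is indeed the Kesten--Nolin separation machinery from \cite{CheDumHon13}, so you are aiming at the right target.

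There is, however, a genuine gap in Stage~1. You partition the boundaries into $N=O(\delta^{-1})$ arcs and write $A_\sigma(r,R)=\bigcup_{(\underline I,\underline J)}A_\sigma^{\underline I,\underline J}(r,R)$, then argue by pigeonhole that some landing pattern $(\underline I^*,\underline J^*)$ carries at least a $N^{-2k}$-fraction of the mass, and finally assert that because ``separated patterns form a positive fraction of all patterns'' the extremal pattern must be separated. That last step conflates a combinatorial statement (most patterns, by count, are separated) with the probabilistic statement you actually need (the probability mass concentrates on separated patterns). Pigeonhole alone certifies the existence of \emph{some} pattern with mass $\geq \phi[A_\sigma]/N^{2k}$; nothing prevents that extremal pattern from being non-separated — for all you know, the entire event $A_\sigma$ could be supported on configurations whose only realisations have endpoints clustered in a single arc of $\partial\Lambda_r$. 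Ruling out this scenario is the whole content of the separation lemma and is not free; your Stage~1 silently assumes the conclusion.

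What the reference \cite{CheDumHon13} (following Kesten and Nolin) does instead is a multi-scale conditioning argument. One introduces a quality function measuring separation of arm endpoints at each dyadic scale, and proves that \emph{conditionally on the arms reaching scale $\rho$ with arbitrary (possibly terrible) separation}, the extension to scale $2\rho$ is either well-separated with uniformly positive probability, or else forces an auxiliary arm event in a thin annulus whose probability decays geometrically. Iterating over scales, the contribution of ``never separating'' configurations is summable and small, which is what converts the combinatorial abundance of separated patterns into a probabilistic statement. This is where the real work lies, and your proposal skips it. Your Stages~2 and~3 — exploring the arms in the annulus, leaving fresh boxes $H_i^{\rm in/out}$ near the prescribed endpoints, and using Theorem~\ref{thm:sRSW} (plus duality for $\sigma_i=0$) to extend with conditional probability $\geq\eta(\delta)$ uniformly over the induced boundary conditions — are sound and do belong in the proof, but they only apply once a well-separated landing pattern with comparable probability has been produced; as written, they prove localization (Proposition~\ref{prop:localization}) assuming well-separation, not well-separation itself.
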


\begin{proof}
With the help of Theorem~\ref{thm:sRSW}, the proof follows the same lines as for the random-cluster model with $q=2$. We refer to \cite{CheDumHon13} for details.
\end{proof}

As a first application of this result, we obtain the following.

\begin{proposition}[Quasimultiplicativity]\label{prop:quasimultiplicativity}
  Fix  $1\le q< 4$ and $\sigma$. There exist $c_7=c_7(\sigma,q)>0$ and $C_7=C_7(\sigma,q)>0$ such that for every $R\ge r\ge r_\sigma$,
 \begin{equation}\label{eq:MULTIPLICATIVITY}
c_7\,\phi_{\bbZ^2}[A_{\sigma}(r,R)] \le \phi_{\bbZ^2}[A_{\sigma}(r,\rho)]\phi_{\bbZ^2}[A_{\sigma}(\rho,R)] \le C_7\,\phi_{\bbZ^2}[A_{\sigma}(r,R)].
  \end{equation}
  \end{proposition}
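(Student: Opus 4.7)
The plan is to prove quasi-multiplicativity by assembling three ingredients: Proposition~\ref{prop:separation} (well-separation of arms, which normalises the geometry at the intermediate scale $\rho$), the mixing property~\eqref{eq:mix2} (which decouples edges inside $\La_\rho$ from edges outside), and Theorem~\ref{thm:RSWquads} (which supplies primal or dual crossings in the gluing region uniformly in boundary conditions).

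For the upper bound, I start from the inclusion
\[
A_\sigma(r,R)\subset A_\sigma(r,\rho/2)\cap A_\sigma(2\rho,R),
\]
valid since arms from $\partial\La_r$ to $\partial\La_R$ restrict to arms in each sub-annulus. These two events depend respectively on edges in $\La_{\rho/2}$ and on edges outside $\La_{2\rho}$, so conditioning on $\omega$ outside $\La_\rho$ and applying~\eqref{eq:mix2} inside $\La_\rho$ gives
\[
\phi_{\bbZ^2}[A_\sigma(r,\rho/2)\cap A_\sigma(2\rho,R)]\le c_{\rm mix}^{-1}\,\phi_{\bbZ^2}[A_\sigma(r,\rho/2)]\,\phi_{\bbZ^2}[A_\sigma(2\rho,R)].
\]
It remains to replace $A_\sigma(r,\rho/2)$ by $A_\sigma(r,\rho)$ up to a multiplicative constant (and symmetrically outside). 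This reduces to an easier instance of the gluing argument used below: by Proposition~\ref{prop:separation}, pass to $A_\sigma^{\rm sep}(r,\rho/2)$, then extend each well-separated arm endpoint on $\partial\La_{\rho/2}$ outward to $\partial\La_\rho$ through a dedicated corridor. Each extension has probability bounded below by a constant by Theorem~\ref{thm:RSWquads} applied to a quad of bounded extremal distance inside the corridor (primal for $\sigma_i=1$, dual for $\sigma_i=0$, the latter by duality).

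The lower bound is the main step. Let $I:=A_\sigma^{\rm sep}(r,\rho/2)$ and $O:=A_\sigma^{\rm sep}(2\rho,R)$ with a small parameter $\delta>0$. Proposition~\ref{prop:separation} (together with the extension argument above) yields $\phi[I]\ge c\,\phi[A_\sigma(r,\rho)]$ and $\phi[O]\ge c\,\phi[A_\sigma(\rho,R)]$. Moreover, $I$ is supported in $\La_{\rho/2}$ and $O$ outside $\La_{2\rho}$, so the mixing property on $\La_\rho$ gives $\phi[I\cap O]\ge c_{\rm mix}^2\phi[I]\phi[O]$. On $I\cap O$, we have $k$ well-separated endpoints $x_1^{\rm in},\dots,x_k^{\rm in}$ on $\partial\La_{\rho/2}$ and $k$ well-separated endpoints $x_1^{\rm out},\dots,x_k^{\rm out}$ on $\partial\La_{2\rho}$, each locally extended by a $\sigma_i$-arm in a $\delta$-neighbourhood. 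I partition the annulus $\La_{2\rho}\setminus\La_{\rho/2}$ into $k$ counterclockwise sectors $S_1,\dots,S_k$ so that $x_i^{\rm in}$ and $x_i^{\rm out}$ both lie in $S_i$ and the radial sides of $S_i$ avoid all local extensions. Each $S_i$ is a discrete quad with two arcs near $x_i^{\rm in}$ and $x_i^{\rm out}$; for $\delta$ fixed its extremal distance is bounded by a universal constant. Theorem~\ref{thm:RSWquads} then gives a $\sigma_i$-crossing of $S_i$ with probability at least some $\eta>0$, uniformly in the boundary conditions inherited from the inner and outer configurations and from neighbouring sectors. After further conditioning on the edges on the radial separators, the sectors are independent; the joint gluing has probability at least $\eta^k$ and, combined with the inner and outer arms, realises $A_\sigma(r,R)$. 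Assembling the estimates gives
\[
\phi[A_\sigma(r,R)]\ge\eta^k c_{\rm mix}^2\,\phi[I]\,\phi[O]\ge c'\,\phi[A_\sigma(r,\rho)]\,\phi[A_\sigma(\rho,R)].
\]

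The principal obstacle is the gluing step: one must produce $\sigma_i$-crossings of sectors whose boundary carries adversarial (possibly fractal) conditions coming from the inner and outer arm realisations, and this must hold uniformly for mixed primal and dual types. For $q=1$ or $q=2$ the existing boundary-specific RSW bounds combined with FKG (for alternating arms) would suffice, but for general $q\in[1,4)$ they do not. Theorem~\ref{thm:RSWquads} is precisely tailored to this situation, its two-sided bounds handling primal and dual crossings uniformly in boundary conditions. A minor bookkeeping point is to verify that one may choose $\delta$ small enough (depending only on $k$) that the sectors $S_i$ have uniformly bounded extremal distance; this is a routine planar check.
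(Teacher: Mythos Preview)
Your overall approach is correct and is precisely what the paper intends: it simply refers to \cite{CheDumHon13}, whose argument transfers to all $1\le q<4$ once Theorem~\ref{thm:sRSW} replaces the $q=2$-specific crossing estimates in the gluing step. Your identification of this as the crucial new input, in particular for non-alternating $\sigma$, is exactly the point.

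There is, however, a gap in your gluing construction. You partition the annulus into radial sectors $S_i$, each containing the matched pair $(x_i^{\rm in},x_i^{\rm out})$, but such a partition need not exist: well-separation spreads the inner endpoints on $\partial\La_{\rho/2}$ and the outer endpoints on $\partial\La_{2\rho}$ separately, with no angular alignment between the two families. For instance, with $k=3$ the inner endpoints may all sit in a short arc near angle $0$ while the outer ones sit near angle $\pi$; then every radial sector containing one of each has angular extent at least $\pi-O(\delta)$, and three disjoint such sectors cannot tile the circle, so the ``routine planar check'' you invoke actually fails. The standard remedy (and the one used in \cite{CheDumHon13}) is to invoke Proposition~\ref{prop:localization} rather than just Proposition~\ref{prop:separation}: fix in advance radially aligned landing intervals $I_i$ on the inner and $J_i$ on the outer boundary, replace your events $I$ and $O$ by the corresponding localised arm events, and then connect $I_i$ to $J_i$ through deterministic corridors of bounded extremal length, to which Theorem~\ref{thm:sRSW} applies exactly as you describe.
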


\begin{proof}
With the help of Theorem~\ref{thm:sRSW}, the proof follows the same lines as for the random-cluster model with $q=2$. We refer to \cite{CheDumHon13} for details.
\end{proof}

For sequences $\sigma$ with no two 0 or 1 following each other when going cyclically, the previously available crossing probability estimates~\eqref{eq:RSW} are sufficient to derive the quasi-multiplicativity for $A_{\sigma}$, see \cite{Wu}. Nevertheless, obtaining the same result for other sequences relies crucially on 
Theorem~\ref{thm:sRSW}.  This is particularly important for the arm sequence $10101$ that is used repeatedly later on, see for instance the discussions of~\eqref{eq:UNIVERSAL5},~\eqref{eq:SIX-ARM}, and~\eqref{eq:FOUR-ARM}.

Another classical consequence of well-separation is the possibility of {\em localizing} the endpoints of arms.

\begin{definition}
Let $I=(I_i)_{1\leq i\leq k}$ and $J=(J_i)_{1\le i\le k}$ be two collections of disjoint intervals on the boundary of
the square $[-1,1]^2$, distributed in counterclockwise order. For a sequence $\sigma$ of length $k$, let $A^{I,J}_{\sigma}(r,R)$ be the event that $A_{\sigma}(r,R)$ occurs and the arms $\gamma_i$ can be chosen in such a way that $\gamma_i$ starts on $rI_i$ and ends on $RJ_i$ for every $1\leq i\leq k$.
\end{definition}

Since Theorem~\ref{thm:sRSW} generalises \cite{CheDumHon13} to every $1\le q<4$, we refer to the corresponding paper for the proof of the following result.
\begin{proposition}[Localization]\label{prop:localization}
Fix $1\le q<4$. For every $k\ge1$, every $I$ and $J$ as above, and every $\sigma$ of length $k$, there exists $c_8=c_8(\sigma,I,J,q)>0$ such that for every $R\ge r\ge r_\sigma$,
\begin{equation}
c_8\phi_{\bbZ^2}[A_{\sigma}(r,R)]\le \phi_{\bbZ^2}[A_{\sigma}^{I,J}(r,R)]\le\phi_{\bbZ^2}[A_{\sigma}(r,R)].
\end{equation}
\end{proposition}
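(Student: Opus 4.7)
The plan is to prove the upper bound trivially and to handle the lower bound in three steps: reduction to the well-separated event, construction of routing quads near the inner and outer boundaries, and concatenation of crossings via FKG. The upper bound $\phi_{\bbZ^2}[A^{I,J}_\sigma(r,R)]\le\phi_{\bbZ^2}[A_\sigma(r,R)]$ is immediate from the inclusion $A_\sigma^{I,J}(r,R)\subset A_\sigma(r,R)$. For the lower bound, I would fix $\delta>0$ small enough in terms of $I,J$ (so that in particular the minimal cyclic distance between consecutive arcs of $I$, and of $J$, exceeds $10\delta$), and apply Proposition~\ref{prop:separation} to reduce the task to proving $\phi_{\bbZ^2}[A_\sigma^{I,J}(r,R)]\ge c\,\phi_{\bbZ^2}[A_\sigma^{\rm sep}(r,R)]$ for some $c=c(\sigma,I,J,\delta,q)>0$.

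On $A_\sigma^{\rm sep}(r,R)$, the arms have endpoints $x_i\in\partial\La_r$ and $x_i'\in\partial\La_R$ at pairwise cyclic distance at least $2\delta r$ and $2\delta R$, and each $x_i$ (resp.~$x_i'$) is $\sigma_i$-connected inside $\La_{\delta r}(x_i)$ (resp.~$\La_{\delta R}(x_i')$) to distance $\delta r$ (resp.~$\delta R$) of $\partial\La_r$ (resp.~$\partial\La_R$). For each $1\le i\le k$, I would construct quads $Q_i^{\rm in}\subset\La_{(1+2\delta)r}\setminus\La_r$ and $Q_i^{\rm out}\subset\La_R\setminus\La_{(1-2\delta)R}$, each shaped as a thin tube routed along the corresponding annulus so as to preserve cyclic order and to join the $\sigma_i$-landing at $x_i$ (resp.~$x_i'$) to a sub-arc of $rI_i$ (resp.~$RJ_i$). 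The choice of $\delta$ ensures that the $2k$ tubes are pairwise disjoint, disjoint from all $\gamma_j$'s outside a $\delta$-neighbourhood of the endpoints, and that their extremal distances are bounded above and below by constants depending only on $I$, $J$ and $\delta$.

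The proof would then conclude by applying Theorem~\ref{thm:sRSW} in each $Q_i^{\rm in}$ and $Q_i^{\rm out}$ to obtain a $\sigma_i$-type crossing of the two marked arcs with probability at least $\eta=\eta(I,J,\delta)>0$, \emph{uniformly} in boundary conditions. Concatenating each such crossing with the corresponding landing and with $\gamma_i$ produces an arm realising $A_\sigma^{I,J}(r,R)$. The primal ($\sigma_i=1$) crossings form an increasing family and can be combined by the FKG inequality~\eqref{eq:FKG}; the dual ($\sigma_i=0$) crossings form a decreasing family which admits the analogous inequality via the dual random-cluster representation; and the two collections are combined by a further application of~\eqref{eq:FKG} and~\eqref{eq:CBC} after conditioning on the well-separated arms. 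The main obstacle is that, once one conditions on the realisation of $\gamma_1,\dots,\gamma_k$, each quad $Q_i^{\rm in},Q_i^{\rm out}$ inherits unpredictable (and possibly fractal) boundary conditions from the explored paths; this is precisely the configuration for which Theorem~\ref{thm:sRSW} was designed, and its uniformity in boundary conditions is what upgrades the $q=2$ argument of \cite{CheDumHon13} into a proof valid for all $1\le q<4$.
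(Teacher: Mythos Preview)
Your proposal follows the same route as the paper, which simply refers to \cite{CheDumHon13} with Theorem~\ref{thm:sRSW} replacing the $q=2$-specific crossing input; you correctly identify that the uniformity of Theorem~\ref{thm:sRSW} in boundary conditions is the key upgrade. Two details should be tightened. First, the claim that the tubes $Q_i^{\rm in}\subset\La_{(1+2\delta)r}\setminus\La_r$ can be taken ``disjoint from all $\gamma_j$'s'' is unjustified: the arms live in $\La_R\setminus\La_r$ and may re-enter this thin annulus far from their endpoints, in which case a $\gamma_j$ of opposite type can block the desired $\sigma_i$-crossing of $Q_i^{\rm in}$. The standard remedy (as in \cite{CheDumHon13}) is either to route on the \emph{other} side of $\partial\La_r$ using that the landing extends into $\La_r$, or to first invoke Proposition~\ref{prop:quasimultiplicativity} to pass to $A_\sigma^{\rm sep}(2r,R/2)$ and route in the freed annuli $\La_{2r}\setminus\La_r$ and $\La_R\setminus\La_{R/2}$, which are genuinely disjoint from the arms. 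Second, the final combination of primal and dual tube-crossings cannot be done by FKG, which does not mix increasing and decreasing events; since the tubes are pairwise disjoint, one instead conditions iteratively via~\eqref{eq:SMP} on the configuration in all tubes but one and uses that Theorem~\ref{thm:sRSW} gives each crossing (primal or dual) with probability at least $\eta$ uniformly in boundary conditions, yielding $\eta^{2k}$ without any appeal to FKG.
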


\subsection{Bounds on the probability of arm events}\label{sec:proof_arm_events}

We begin with deriving up-to-constant estimates on three specific arm-events whose probabilities do not really vary when changing $q$.

\begin{proposition}[Universal arm-exponents]\label{prop:universal}
    Let $1\le q<4$. There exist $c_9,C_9>0$ such that for every $R\ge r\ge 1$,
    \begin{align}
    	c_9\left(r/R\right)^{2}\le\,&\phi_{\bbZ^2}[A_{10101}(r,R)]\le C_9\left(r/R\right)^{2},\label{eq:UNIVERSAL5}\\
    	c_9\left(r/R\right)^{2}\le\,&\phi_{\bbZ^2}[A_{101}^+(r,R)]\le C_9\left(r/R\right)^{2},\label{eq:UNIVERSAL3}\\
    	c_9\,r/R\,\le\,&\phi_{\bbZ^2}[A_{10}^+(r,R)]\le C_9\,r/R.\label{eq:UNIVERSAL2}  
    \end{align}
\end{proposition}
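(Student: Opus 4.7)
The plan is to establish matching upper and lower bounds for each of the three arm events. Lower bounds come from explicit RSW constructions; upper bounds use a counting argument exploiting planarity.

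\textbf{Lower bounds.} The inequality $\phi_{\bbZ^2}[A_{10}^+(r,R)] \ge c \, r/R$ is essentially a byproduct of Section~\ref{sec:5.1a}: dividing~\eqref{eq:hhg} (which gives $\phi[A_{10}^+]\,\phi[A_{0}^+]\gtrsim r/R$) by the exponentially small upper bound on $\phi_\bbH^0[A_0^+(r,R)]$ in~\eqref{eq:hhgg} yields a lower bound even stronger than $r/R$, and one passes from $\phi_\bbH^0$ to $\phi_{\bbZ^2}$ by the standard half-plane domination argument together with Theorem~\ref{thm:sRSW}. For $A_{101}^+$ and $A_{10101}$, I would run the analogous construction in two (respectively five) well-separated angular sectors, combining primal and dual half-plane 2-arm configurations by the FKG inequality~\eqref{eq:FKG}, the mixing property~\eqref{eq:mix2}, localization of arm endpoints (Proposition~\ref{prop:localization}), and well-separation (Proposition~\ref{prop:separation}). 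The factor $(r/R)^2$ then arises as the product of two essentially independent $r/R$ factors, one per adjacent primal/dual arm pair.

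\textbf{Upper bounds via counting.} All three upper bounds share a common template. For each event $E(r,R)$ and a suitable reference region $Q_R$ of scale $R$, define
\[
N_E := \#\big\{r\text{-boxes } B \subset Q_R \;:\; E(r,R)\text{ centered at }B\text{ occurs}\big\}.
\]
By translation invariance (in the plane for $A_{10101}$, or along $\partial \bbH$ for $A_{10}^+$ and $A_{101}^+$),
\[
\phi_{\bbZ^2}[N_E] \asymp n_E \cdot \phi_{\bbZ^2}[E(r,R)],
\]
where $n_E = (R/r)^2$ for $A_{10101}$ and $n_E = R/r$ for $A_{10}^+, A_{101}^+$. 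It then suffices to show $\phi_{\bbZ^2}[N_E] \le C$ for a universal constant~$C$.

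\textbf{Why $N_E$ is $O(1)$ in expectation.} The geometric content is that each arm event corresponds to a topological feature at scale~$R$ whose count is controlled by planarity. An $r$-box realizing $A_{10}^+(r,R)$ on the boundary of $\bbH$ is a boundary endpoint of a primal cluster reaching $\partial \La_R$, bounded away from neighbouring boundary clusters by a dual arm; a planar Euler-type argument in the half-plane bounds the number of such endpoints by a constant times the number of macroscopic half-plane clusters reaching $\partial \La_R$. An $r$-box realizing $A_{101}^+(r,R)$ is a boundary point where two distinct macroscopic primal clusters meet, and is counted in the same manner. Finally, an $r$-box realizing $A_{10101}(r,R)$ is a \emph{triple junction}: three distinct macroscopic primal clusters in $\La_R$ touch it. In each case, after first replacing the raw event by its separated version via Proposition~\ref{prop:separation}, distinct centres contribute geometrically disjoint features, and a deterministic planar count bounds $N_E$ in terms of the number of macroscopic clusters crossing $\La_R$; the latter is $O(1)$ in expectation by~\eqref{eq:RSW}, \eqref{eq:wRSW}, and \eqref{eq:mix2}.

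\textbf{Main obstacle.} The most delicate case is $A_{10101}$: one must make the intuitive notion of ``macroscopic primal cluster meeting at a triple junction'' precise and quantitative, so that Euler's formula can be applied to the separated events and the resulting count is uniformly bounded in expectation. This requires Theorem~\ref{thm:sRSW} uniformly over the (possibly fractal) interfaces separating the clusters and the quasi-multiplicativity of arm events (Proposition~\ref{prop:quasimultiplicativity}) to pass between scales. This is precisely the step where the full strength of Theorem~\ref{thm:sRSW} is indispensable, and where the $q=4$ case would break down.
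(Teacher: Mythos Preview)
The paper's own proof simply refers to \cite{CheDumHon13}, so you are attempting to reconstruct the classical argument. Your high-level template---a second-moment/counting argument with translation invariance---is indeed the standard one, but the sketch contains two genuine gaps. First, the lower bounds: you propose to ``combine primal and dual half-plane 2-arm configurations by the FKG inequality'', but the events $A^+_{10}$, $A^+_{101}$, $A_{10101}$ involve both open and dual-open arms and are therefore neither increasing nor decreasing, so FKG does not apply to glue them. The actual lower bounds come from the \emph{same} counting argument as the upper bounds: one exhibits via~\eqref{eq:RSW} a macroscopic geometric feature (e.g.\ the starting point of the left-most vertical crossing for $A^+_{10}$, the lowest point of the lowest horizontal crossing for $A^+_{101}$, and an analogous two-crossing construction for $A_{10101}$) that exists with uniformly positive probability and forces a \emph{localized} arm event at the $r$-box containing it; summing over boxes and using Proposition~\ref{prop:localization} gives the lower bound.

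Second, and more seriously, your five-arm upper bound rests on the claim that an $r$-box realizing $A_{10101}(r,R)$ is a ``triple junction where three distinct macroscopic primal clusters in $\La_R$ touch it'', and then invokes Euler's formula. This claim is false: the three primal arms are separated by dual arms only in the annulus $\La_R\setminus\La_r$, and may very well belong to a single cluster of $\La_R$ (they can connect inside $\La_r$). No Euler-type count on clusters bounds the number of such boxes. The correct mechanism, which is what \cite{CheDumHon13} does, is to use Proposition~\ref{prop:localization} to force the five arms to land on five prescribed arcs of $\partial\La_R$; with this localization the $r$-box is \emph{uniquely determined} by the configuration (for instance as the box containing the lowest intersection point of two specific interfaces), so the localized events $\hat A_{10101}(B)$ are pairwise disjoint and $\sum_B \phi_{\bbZ^2}[\hat A_{10101}(B)]\le 1$ directly. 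It is in establishing Propositions~\ref{prop:separation}--\ref{prop:localization} for the non-alternating (in the cyclic sense) sequence $10101$ that Theorem~\ref{thm:sRSW} is genuinely needed, not in any Euler-type step.
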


\begin{proof}The proof of \cite{CheDumHon13} extends trivially  to our setting using~\eqref{eq:MULTIPLICATIVITY} and Proposition~\ref{prop:localization}. 
\end{proof}

Next we study two consequences of~\eqref{eq:UNIVERSAL5}. The first concerns the six arm event. 

\begin{corollary}\label{cor:six arm}
  Fix  $1\le q< 4$. There exist $c_{10},C_{10}>0$ such that for every $R\ge r\ge 1$,
  \begin{align} &\phi_{\bbZ^2}[A_{101010}(r,R)] \le C_{10}\left(r/R\right)^{2+c_{10}}.\label{eq:SIX-ARM}
   \end{align}
\end{corollary}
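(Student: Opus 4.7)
The plan is to prove a strict monotonicity inequality between the $10101$- and $101010$-arm exponents: I will show the existence of $c_{10}>0$ and $C>0$ such that
\[
\phi_{\bbZ^2}[A_{101010}(r,R)] \,\le\, C\,(r/R)^{c_{10}}\,\phi_{\bbZ^2}[A_{10101}(r,R)] \qquad \text{for every } R\ge r\ge 1.
\]
Combined with the universal estimate~\eqref{eq:UNIVERSAL5}, this immediately yields~\eqref{eq:SIX-ARM}. The core input is a one-scale statement: there exist an integer $K\ge 2$ and $\alpha\in(0,c_7)$ (where $c_7=c_7(101010,q)$ is the lower quasi-multiplicativity constant from Proposition~\ref{prop:quasimultiplicativity}) such that, for every $r\ge 1$,
\[
\phi_{\bbZ^2}[A_{101010}(r,Kr)] \,\le\, \alpha\,K^{-2}.
\]

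To prove this one-scale bound, I would first use Proposition~\ref{prop:separation} to reduce, up to a constant multiplicative factor, to the sub-event of $A_{10101}(r,Kr)$ on which the five arms are $\delta$-well-separated. A sixth (necessarily dual) arm promoting such a configuration to $A_{101010}$ must lie in the unique sector of the annulus bounded by the two extremal $1$-arms. Well-separation exposes a sub-quad $Q$ inside this sector whose extremal distance is bounded above by an absolute constant, and Theorem~\ref{thm:sRSW} then provides a primal open crossing of $Q$ with probability at least some $\eta>0$, \emph{uniformly in the boundary conditions} induced by conditioning on the five well-separated arms. Such a crossing blocks any dual path across $Q$ and so precludes $A_{101010}$. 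This already delivers $\phi_{\bbZ^2}[A_{101010}(r,Kr)] \le (1-\eta)\,\phi_{\bbZ^2}[A_{10101}(r,Kr)] \le (1-\eta)\,C_9\,K^{-2}$. If $(1-\eta)C_9$ is not yet strictly below $c_7$, I would amplify the gap by iterating the same blocking construction across $\asymp \log_2 K$ dyadic sub-annuli inside $(r,Kr)$, revealing the configuration from outside in via the spatial Markov property~\eqref{eq:SMP} and using the uniform-in-boundary-conditions character of Theorem~\ref{thm:sRSW} to obtain a conditional $(1-\eta)$ bound on each sub-scale blocker failure; choosing $K$ large enough then renders $\alpha$ as small as desired, in particular below $c_7$.

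Given the one-scale inequality, the upper-bound form of Proposition~\ref{prop:quasimultiplicativity} applied to $\sigma=101010$ gives, with $\rho_j:=K^j r$ and $n$ chosen so that $\rho_n$ is of order $R$,
\[
\phi_{\bbZ^2}[A_{101010}(r,R)] \,\le\, c_7^{-(n-1)}\!\prod_{j=0}^{n-1}\!\phi_{\bbZ^2}[A_{101010}(\rho_j,\rho_{j+1})] \,\le\, c_7^{-(n-1)}(\alpha K^{-2})^n \,=\, c_7\,\bigl(\tfrac{\alpha}{c_7}\bigr)^{\!n}(r/R)^{2}.
\]
Since $\alpha/c_7<1$ by construction and $n\asymp\log_K(R/r)$, the prefactor $(\alpha/c_7)^n$ is bounded above by a constant times $(r/R)^{c_{10}}$, with $c_{10}:=\log_K(c_7/\alpha)>0$; this yields~\eqref{eq:SIX-ARM}.

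The main obstacle is the one-scale inequality. The boundary conditions on the sub-quad $Q$ inherited from conditioning on five fractal arms are highly irregular, so the blocking probability has to be bounded below \emph{uniformly in the boundary conditions}; this is exactly what Theorem~\ref{thm:sRSW} provides, whereas the earlier RSW-type estimates~\eqref{eq:RSW} and~\eqref{eq:wRSW} (valid only for flat rectangles or specific macroscopic geometries) would be insufficient here. A secondary difficulty is that the quasi-multiplicativity constant $c_7$ for six alternating arms may be small, which forces the multi-scale amplification described above; this amplification in turn depends crucially on the universality of Theorem~\ref{thm:sRSW} across scales and boundary conditions.
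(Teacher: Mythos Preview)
Your approach is correct and rests on the same key idea as the paper: use Theorem~\ref{thm:sRSW} to show that, conditionally on five (well-separated) arms, a primal crossing blocking any candidate sixth dual arm can be built with uniformly positive probability at each dyadic scale, regardless of the fractal geometry of the bounding arms. The paper's proof is a two-line sketch of exactly this---``conditionally on the first five arms, the probability that an additional dual arm exists decays at least as fast as $(r/R)^c$ due to Theorem~\ref{thm:sRSW}''---which, combined with~\eqref{eq:UNIVERSAL5}, gives~\eqref{eq:SIX-ARM}.

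Your packaging, however, is more circuitous than necessary. The ``amplification'' step you describe---iterating the blocking construction across $\asymp\log_2 K$ dyadic sub-annuli of $(r,Kr)$ via exploration and the spatial Markov property---is already the entire argument, and there is no reason to stop at a fixed $K$ and then invoke quasi-multiplicativity. Running the same iteration directly between $r$ and $R$ yields
\[
\phi_{\bbZ^2}[A_{101010}(r,R)] \;\le\; (1-\eta)^{\lfloor\log_2(R/r)\rfloor}\,\phi_{\bbZ^2}[A_{10101}(r,R)] \;\le\; C(r/R)^{c}\cdot C_9(r/R)^{2},
\]
which is~\eqref{eq:SIX-ARM} in one stroke. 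The detour through choosing $K$ so that $\alpha<c_7$ and then multiplying over $\log_K(R/r)$ macro-scales via Proposition~\ref{prop:quasimultiplicativity} reaches the same conclusion but introduces the quasi-multiplicativity constant for $\sigma=101010$ without need. (Incidentally, what you call the ``upper-bound form'' of Proposition~\ref{prop:quasimultiplicativity} is really the left inequality there, rearranged; the computation is correct despite the label.)
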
 

\begin{proof}
	This is a standard argument that we only sketch. 
	Conditionally on the first five arms, the probability that an additional dual arm exists decays at least as fast as  $(r/R)^c$ due to Theorem~\ref{thm:sRSW}. 
	Since the occurrence of the first five arms has a probability of  order $(r/R)^2$ as stated by~\eqref{eq:UNIVERSAL5},~\eqref{eq:SIX-ARM} follows. 
\end{proof} 

We now turn to an estimate on the probability of the four-arm event.
\begin{proposition}\label{prop:four arm}
  Fix  $1\le q< 4$. There exist $c_{11},c_{12}>0$ such that for every $R\ge r\ge 1$,
  \begin{align}
 &\phi_{\bbZ^2}[A_{1010}(r,R)]\ge   c_{11}\tfrac{r\, \pi_1^+(R)}{R\, \pi_1^+(r)}\ge c_{12}(r/R)^{2-c_{12}} \label{eq:FOUR-ARM}.
  \end{align}
\end{proposition}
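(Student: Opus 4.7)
The second inequality rewrites Lemma~\ref{lem:mn}: multiplying $\pi_1^+(R)/\pi_1^+(r)\ge c_3(r/R)^{1-c_3}$ by $r/R$ yields the bound with $c_{12}=c_3$.

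For the first inequality, my plan is to combine the universal value of the five-arm exponent $\phi_{\bbZ^2}[A_{10101}(r,R)]\asymp (r/R)^2$ (from~\eqref{eq:UNIVERSAL5}) with a strict monotonicity estimate of the form
\begin{equation*}
  \phi_{\bbZ^2}[A_{10101}(r,R)] \le C\,\phi_{\bbZ^2}[A_{1010}(r,R)] \cdot \frac{\phi_{\bbZ^2}[A_{10}^+(r,R)]}{\phi_{\bbZ^2}[A_1^+(r,R)]}.
\end{equation*}
Granting this, using $\phi_{\bbZ^2}[A_{10}^+(r,R)]\asymp r/R$ from~\eqref{eq:UNIVERSAL2} and $\phi_{\bbZ^2}[A_1^+(r,R)]\asymp \pi_1^+(R)/\pi_1^+(r)$ (Proposition~\ref{prop:quasimultiplicativity} with $\sigma=1$), the estimate rearranges into $\phi_{\bbZ^2}[A_{1010}(r,R)]\ge c\,(r/R)\,\pi_1^+(R)/\pi_1^+(r)$, which is the desired first inequality.

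To establish the strict monotonicity, I would reduce $A_{10101}$ and $A_{1010}$ to their well-separated versions via Proposition~\ref{prop:separation}. On a realisation of $A_{10101}^{\rm sep}(r,R)$, the four arms of types $1,0,1,0$ form a realisation of $A_{1010}^{\rm sep}(r,R)$, while the fifth arm (type $1$) lies in a sector bounded on one side by the fourth arm (dual) and on the other by the first arm (primal) --- a Dobrushin wedge. Conditioning on the four arms, the key step is to bound the conditional probability of an additional disjoint primal arm from $\partial\La_r$ to $\partial\La_R$ inside this wedge by $C\,\phi_{\bbZ^2}[A_{10}^+(r,R)]/\phi_{\bbZ^2}[A_1^+(r,R)]$. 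Summing over the (at most four) possible positions of the fifth arm and applying Proposition~\ref{prop:separation} in the other direction recovers the displayed monotonicity bound.

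The main obstacle is this conditional estimate inside the Dobrushin wedge: the boundary of the wedge is a random, possibly fractal path, and its two sides carry opposite (wired along the primal side, free along the dual side) boundary conditions. One must show that the mixed geometry forces the primal one-arm probability to be comparable to the conditional probability of adding a second, opposite-type arm to an existing one in the free half-plane. The arbitrary-quad crossing estimates of Theorem~\ref{thm:sRSW} are essential here, as they are the only currently available tool giving uniform control independently of the roughness of the wedge boundary; a scale-by-scale iteration along dyadic sub-annuli of the wedge, in the spirit of the proof of Lemma~\ref{lem:mn}, then yields the required bound.
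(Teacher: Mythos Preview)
Your reduction of the second inequality to Lemma~\ref{lem:mn} is correct, but the argument for the first inequality has a real gap. After inserting $\phi_{\bbZ^2}[A_{10101}]\asymp(r/R)^2$, $\phi_{\bbZ^2}[A_{10}^+]\asymp r/R$ and $\phi_{\bbZ^2}[A_1^+]\asymp\pi_1^+(R)/\pi_1^+(r)$, your strict-monotonicity estimate is \emph{equivalent} (up to constants) to the target inequality $\phi_{\bbZ^2}[A_{1010}(r,R)]\ge c\,(r/R)\,\pi_1^+(R)/\pi_1^+(r)$; you have only relocated the difficulty to the conditional bound inside the Dobrushin wedge. The ``scale-by-scale iteration in the spirit of Lemma~\ref{lem:mn}'' you propose produces an unspecified polynomial gain: the exponent $c_3$ in Lemma~\ref{lem:mn} arises from iterating~\eqref{eq:wRSW} over dyadic scales and is not tied to any specific arm quantity. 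Run in your wedge, such an iteration gives at best $\phi_{\bbZ^2}[A_{10101}]\le C(r/R)^c\,\phi_{\bbZ^2}[A_{1010}]$ for some $c>0$, hence $\phi_{\bbZ^2}[A_{1010}(r,R)]\ge c(r/R)^{2-c}$ --- which the paper itself flags as a ``standard consequence of Theorem~\ref{thm:sRSW}'' --- but not the explicit $\pi_1^+$-comparison. Bounding a one-arm probability in a \emph{fractal} free-boundary domain by the \emph{flat}-boundary quantity $\pi_1^+$ is precisely the content of Proposition~\ref{prop:10}, and that rests on the parafermionic observable rather than on crossing estimates alone.

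The paper's proof is entirely different and avoids conditioning on arms. One manufactures a random $R$-centred domain $\calD$ (the connected component of $0$ among vertices not connected to $\partial\La_{3R}$, on an event $E$ of uniformly positive probability involving nested primal and dual circuits in $\La_{3R}\setminus\La_{2R}$, the primal one connected to $\partial\La_{4R}$). Every $r$-box meeting $\calD^c$ that is connected to $\La_R$ inside $\calD$ carries a translate of $A_{1010}(r,R)$: two primal arms from the connection in $\calD$ and from the exterior cluster of $\calD^c$, two dual arms from $\partial\calD$. Summing over the $O((R/r)^2)$ possible centres and using translation invariance gives $(R/r)^2\phi_{\bbZ^2}[A_{1010}(r,R)]\ge c\,M(r,R)$, after which Proposition~\ref{prop:10} supplies $M(r,R)\ge c\,R\pi_1^+(R)/(r\pi_1^+(r))$.
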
 

That the probability of the four-arm event is polynomially larger than that of the five-arm event, that is than $(r/R)^2$, is a standard consequence of Theorem~\ref{thm:sRSW}. 
It is noteworthy that~\eqref{eq:FOUR-ARM} aditionally provides an explicit bound for the probability of the four-arm event 
in terms of the probability of the half-plane one-arm event.

\begin{proof}
Fix $1\le r\le R$. 
Let $E$ be the event that $\La_{3R}$ contains both an open circuit and a dual open circuit surrounding $\La_{2R}$, with the open one being connected to $\partial \La_{4R}$. 
By the crossing estimates~\eqref{eq:wRSW}, $\phi_{\bbZ^2}[E]\ge c>0$.
Let $\calD$ be the connected component of $0$ in the set of vertices not connected to $\partial\Lambda_{3R}$ (to be more precise the largest subdomain containing 0). Observe that when $E$ occurs, $\calD$ is $R$-centred. 
Moreover, conditionally on $\calD$ and on the configuration outside of it, the measure inside $\calD$ is $\phi_{\calD}^0$.

Let $A_{1010}(x,r,R)$ be  the translate by the vector $x \in \bbZ^2$ of the event $A_{1010}(r,R)$.
Then, for any $r$-box $\La_r(x)$ that intersects $\calD^c$ and is connected to $\Lambda_R$ in $\calD$, $A_{1010}(x,r,R)$ occurs. 
Indeed, the two arms of type 0 are given by $\partial \calD$, 
one arm of type 1 is given by the fact that any vertex of $\calD^c$ neighbouring a vertex of $\calD$ is connected to $\partial \La_{4R}$ outside of $\calD$, 
and the second arm of type 1 is given by the connection between $\La_r(x)$ and $\La_R$.
Thus
\begin{equation}\label{eq:pro}
    \sum_{x\in r\mathbb Z^2 \cap \La_{3R}}\phi_{\bbZ^2}[A_{1010}(x,r,R)]
    \ge \phi_{\bbZ^2}\big[\phi_{\calD}[\mathbf M_r(\calD,R)]\,\ind_E\big]
    \ge c\,M(r,R).
\end{equation}
Proposition~\ref{prop:10} and Lemma~\ref{lem:mn}  conclude the proof since there are $C(R/r)^2$ terms in the sum above, all equal to $\phi_{\bbZ^2}[A_{1010}(r,R)]$. 
\end{proof}

Using the parafermionic observable, when $q\in [1,3]$, 
the previous lower bound on the probability of the four arm event may be transformed as follows. 

\begin{proposition}\label{prop:beta}
	For every $1\le q\le 3$,  there exists $c_{13}>0$ such that for every $R\ge1$,
    \begin{align}\label{eq:bbb}
    	\phi_{\bbZ^2}[A_{1010}(1,R)]&\ge c_{13}R^{-2 + c_{13}}\tfrac1{\phi_{\bbZ^2}[A_1(0,R)]}.
	\end{align}
\end{proposition}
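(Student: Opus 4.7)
The plan is first to invoke Proposition~\ref{prop:four arm} to convert the desired four-arm lower bound into a bound on a product of one-arm probabilities. Indeed, Proposition~\ref{prop:four arm} applied with $r=1$ gives
\begin{equation*}
\phi_{\bbZ^2}[A_{1010}(1,R)] \;\ge\; c_{11}\,\frac{\pi_1^+(R)}{R\,\pi_1^+(1)} \;\ge\; c\,\frac{\pi_1^+(R)}{R}.
\end{equation*}
Consequently, writing $\pi_1(R):=\phi_{\bbZ^2}[A_1(0,R)]$, to prove~\eqref{eq:bbb} it suffices to establish the complementary inequality
\begin{equation*}
\pi_1^+(R)\cdot\pi_1(R)\;\ge\;c\,R^{c-1}
\end{equation*}
for some $c=c(q)>0$ whenever $q\in[1,3]$. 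In other words, the half-plane and full-plane one-arm exponents must be shown to sum to strictly less than one. Note that this is the precise place where $q\le 3$ is expected to become sharp: for $q=4$, $\pi_1^+(R)\asymp 1/R$ (see the discussion after Lemma~\ref{lem:mn}), so no such inequality can hold.

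\textbf{Parafermionic input.} To prove the key inequality, I would turn to the parafermionic observable introduced in Section~\ref{sec:3.1}, with the parameter $\sigma$ defined by $\sin(\sigma\pi/2)=\sqrt q/2$. The constraint $q\in[1,3]$ is equivalent to $\sigma\in[1/3,2/3]$, which in turn yields $\cos(\sigma\pi/2)\ge 1/2$. The plan is to work in a Dobrushin domain $(\Omega,a,b)$ for which $\Omega$ is essentially a rectangle of size $R$, with $a,b$ opposite corners on a flat side, so that the wired arc $(ba)$ is one straight segment and the free arc $(ab)$ covers the rest; this is the same class of domains used in Lemma~\ref{lem:op}. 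The parafermionic observable satisfies~\eqref{eq:rel_vertex}, and I would apply this contour identity to a contour $\calC$ encircling a small box $\La_1$ around the origin, splitting the identity into an interior boundary contribution (around $0$) and an outer boundary contribution (along $(ab)\cup(ba)$).

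\textbf{Balancing the two sides.} The contribution on the inner contour around the origin is, up to phase factors of unit modulus, bounded in absolute value by $\phi_{\Omega}^{0/1}[0\in\gamma]$. By the same kind of arm analysis as in Section~\ref{sec:3.1}, having $0$ on the interface amounts to a primal connection from $0$ to $(ba)$ together with a dual connection from a dual vertex near $0$ to $(ab)^*$; using the mixing property~\eqref{eq:mix2} and well-separation (Proposition~\ref{prop:separation}) one bounds this above by $C\,\pi_1(R)\,\pi_1^+(R)$ up to constants. The outer contribution is, via the same argument as in the proof of Lemma~\ref{lem:op}, a signed sum whose \emph{real part} picks up a coefficient proportional to $\cos(\sigma\pi/2)$ (rather than the $\cos[(1+\sigma)\pi/4]$ that arose there). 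Here the bound $\cos(\sigma\pi/2)\ge 1/2$ for $q\le3$ is crucial: it prevents the different phase contributions along $(ba)$ from cancelling, and yields a lower bound of order $R^{c'}$ for some $c'>0$ on this outer term (using that $\pi_1^+(R)$ decays slower than $R^{-1}$ by Lemma~\ref{lem:mn}). Equating the two sides of the contour identity then gives $\pi_1(R)\pi_1^+(R)\ge c\,R^{c-1}$.

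\textbf{Main obstacle.} The delicate point is the accounting of windings in the sum $\sum_{e\in\calC}\eta(e)F(e)=0$: one must choose the geometry of $\Omega$ and the contour $\calC$ so that, after grouping edges by their winding value, the surviving coefficient takes the form $\cos(\sigma\pi/2)$ (rather than, say, $\cos(\sigma\pi)$ or $\cos((1+\sigma)\pi/4)$, which would yield $q\le1$ or $q<4$ rather than $q\le3$). Once this geometric setup is fixed and the contour identity correctly separated into its inner and outer parts, the rest of the argument is standard manipulation of arm events via quasi-multiplicativity (Proposition~\ref{prop:quasimultiplicativity}), the mixing property, and Lemma~\ref{lem:mn}.
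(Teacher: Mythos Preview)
Your reduction step is correct and matches the paper exactly: via Proposition~\ref{prop:four arm} the problem reduces to showing
\[
R\,\pi_1^+(R)\,\phi_{\bbZ^2}[A_1(0,R)]\ge c\,R^{c}.
\]
However, your parafermionic argument has a genuine gap, and it is not clear that it can be repaired in the geometry you propose.

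The paper does \emph{not} work in a rectangle with corner Dobrushin conditions. It uses the \emph{slit domain} $\Omega_R:=\Lambda_{3R}\setminus\{(x,0):x\ge1\}$ with $a=b=0$ at the tip of the slit. In this geometry the contour identity~\eqref{eq:rel_vertex} yields (after a phase computation)
\[
\sum_{x\in\partial\Lambda_{3R}}\phi_{\Omega_R}^0[0\longleftrightarrow x]\ \ge\ \cos\tfrac{\pi}{4}(3\sigma-1),
\]
and the condition $1\le q\le 3$ enters precisely to guarantee that the $\beta$-contribution along the slit and the $e_a,e_b$ contribution are \emph{aligned} rather than merely collinear. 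Note that the relevant coefficient is $\cos\tfrac{\pi}{4}(3\sigma-1)$, not $\cos(\sigma\pi/2)$; the two happen to be positive on the same range of $\sigma$, but your winding accounting is not the right one for any natural domain.

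More importantly, your scheme is missing the mechanism that produces the polynomial gain $R^{c}$. In the paper, the left-hand side above is bounded by $C\,R\,\pi_1^+(R)\,\phi_{\bbZ^2}[0\xleftrightarrow{\Omega_R}\partial\Lambda_R]$, and the crucial step is the separate estimate
\[
\phi_{\bbZ^2}[0\xleftrightarrow{\Omega_R}\partial\Lambda_R]\ \le\ R^{-c''}\,\phi_{\bbZ^2}[A_1(0,R)],
\]
i.e.\ having a one-arm in a slit box is \emph{polynomially harder} than the plain one-arm. This is what converts a constant lower bound into $R\pi_1^+(R)\pi_1(R)\ge cR^{c''}$. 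Your rectangle has no slit, so there is no analogue of this comparison, and nothing in your outline supplies the extra factor $R^{c}$.

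Finally, your ``inner contour'' claim is not right: in a rectangle with one wired side, the interface passing through a bulk point is a two-arm event $A_{10}$, not a product $\pi_1(R)\pi_1^+(R)$; and if the point lies on the boundary it is a half-plane one-arm. The factorisation $\pi_1\cdot\pi_1^+$ that you need arises in the paper only because the origin sits at the tip of a slit, so that a connection $0\leftrightarrow\partial\Lambda_{3R}$ looks like a full-plane arm from $0$ (through the slit complement) combined with a half-plane arm from the endpoint on $\partial\Lambda_{3R}$.
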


The above inequality is interesting from two points of view. 
First, it can be used to prove that the density of the infinite cluster $\theta(p)$ is not Lipschitz near $p_c$ (see \cite{DumMan20}).
Second, it is a necessary condition fo the Glauber dynamics to have exceptional times (we refer to \cite{GPS} and references therein for details). 
Let us mention that~\eqref{eq:bbb} is expected to fail for $q$ close to~$4$.

\begin{proof}
By Proposition~\ref{prop:four arm}, it suffices to prove the existence of $c>0$ such that for every $R\ge1$,
\begin{equation}\label{eq:h21}
	R\, \phi_{\bbH}^0[A^+_1(0,R)]\, \phi_{\bbZ^2}[A_1(0,R)]\ge c\,R^c.
\end{equation}

In order to do so, we use the parafermionic observable. 
Consider the Dobrushin domain $\Omega_R$ obtained from $\Lambda_{3R}$ by removing the vertices $(x,0)$ with $x\ge1$ (call this the {\em slit}), 
with $a=b=0$ (in this case $e_a$ and $e_b$ are the medial edges right of the origin, 
and the exploration path is simply the loop passing through $e_a$ and $e_b$); see Fig.~\ref{fig:Omega_R}. 
We now apply~\eqref{eq:rel_vertex} to $\Omega_R$.
The set $\calC$ of boundary medial edges is split into three parts: 
the set $\{e_a,e_b\}$, the set $\alpha$ of edges that are on the boundary of $\Lambda_{3R}^\diamond$, and the set $\beta$ of remaining edges, which are above and below the slit. Proceeding as in Lemma~\ref{lem:op}, we have
\begin{equation}\label{eq:5.15}
	2\sum_{x\in \partial\Lambda_{3R}}\phi_{\Omega_R}^0[0\longleftrightarrow x]
	\ge\sum_{e\in\alpha}|F(e)|\ge \big|\sum_{e\in\beta}\eta(e)F(e)+\eta(e_a)F(e_a)+\eta(e_b)F(e_b)\big|.
\end{equation}
A careful computation (along the same lines as that leading to~\eqref{eq:h12}, and using the vertical symmetry of $\Omega_R$)
shows that the two complex numbers $\sum_{e\in\beta}\eta(e)F(e)$ and $\eta(e_a)F(e_a)+\eta(e_b)F(e_b)$ are collinear; see also the explanation of Fig.~\ref{fig:Omega_R}. 
Moreover, when $1 < q\leq 3$ (which is to say $\sigma \in (1/3,2/3]$), they also have the same direction, 
which implies 
\begin{equation*}
	\big|\sum_{e\in\beta}\eta(e)F(e)+\eta(e_a)F(e_a)+\eta(e_b)F(e_b)\big|
	\ge |\eta(e_a)F(e_a)+\eta(e_b)F(e_b)|= 2\cos \tfrac{\pi}4(3\sigma-1)  >0.
\end{equation*}
The two last displayed inequalities imply
\begin{equation}\label{eq:5.17}
	\sum_{x\in\partial\Lambda_{3R}}\phi_{\Omega_R}^0[0\longleftrightarrow x]\ge \cos \tfrac{\pi}4(3\sigma-1).
\end{equation}

Now, for 0 to be connected to $x \in \partial \La_{3R}$, 0 must be connected to $\partial\Lambda_R$ and $x$ to $\partial\Lambda_R(x)$. 
Thus, using the mixing property~\eqref{eq:mix2},
\begin{equation*}
	\phi_{\Omega_R}^0[0\longleftrightarrow x]
	\le C\,\phi_{\Omega_R}^0[0\longleftrightarrow\partial\Lambda_R]\,\phi_{\Omega_R}^0[x\longleftrightarrow\partial\Lambda_R(x)]
	\le C'\,\phi_{\bbZ^2}[0\xleftrightarrow{\Omega_R}\partial\Lambda_R] \, \phi_{\bbH}^0[A^+_1(0,R)].
\end{equation*}
The second inequality holds since $x$ is on $\partial\Lambda_{3R}$, and therefore the boundary conditions induced by $\Omega_R$ are dominated by the free boundary conditions on a half-plane with $x$ on its boundary. 
Plugging the above into~\eqref{eq:5.17} yields
\begin{equation*}
	R \, \phi_{\bbZ^2}[0\xleftrightarrow{\Omega_R}\partial\Lambda_R] \, \phi_{\bbH}^0[A^+_1(0,R)] \geq c'.
\end{equation*}
Thus, in order to prove ~\eqref{eq:h21}, it suffices to show that 
\begin{equation}\label{eq:yt}
	\phi_{\bbZ^2}[0\xleftrightarrow{\Omega_R}  \partial\Lambda_R]\le R^{-c''}\phi_{\bbZ^2}[A_1(0,R)],
\end{equation}
or in words, having one arm in a slit box is polynomially harder than having one arm in a box.
This is an easy consequence of the crossing estimates~\eqref{eq:wRSW}. 
Indeed, let $A_{k}$ be the event that for every even integer $\ell\le k$, 
there exists no dual path in $\La_{2^{\ell+1}}\setminus \La_{2^\ell}$ that disconnects $0$ from $\partial\Lambda_R$ inside $\Omega_R$. 
If $0$ is connected to $\partial \La_R$ in $\Omega_R$, then $A_{\lfloor \log_2R\rfloor}$ necessarily occurs. Thus,  
\begin{equation}\label{eq:yyt}
	\phi_{\bbZ^2}[0\xleftrightarrow{\Omega_R}  \partial\Lambda_R]\le \phi_{\bbZ^2}[0\longleftrightarrow\partial\Lambda_R,A_{\lfloor \log_2R\rfloor}].
\end{equation}
Now, the crossing estimates~\eqref{eq:wRSW} imply that 
$$\phi_{\bbZ^2}[0\longleftrightarrow\partial\Lambda_R,A_{k+1}]\le (1-c''')\phi_{\bbZ^2}[0\longleftrightarrow\partial\Lambda_R,A_{k}].$$
Inequality~\eqref{eq:yt} follows readily by applying the above $\lfloor\log_2 R\rfloor$ times and using~\eqref{eq:yyt}. This concludes the proof.
\end{proof}

\subsection{New bounds on the one, two and four-arm exponents}\label{sec:perco}

\begin{proposition}\label{lem:perco}	
    Fix $1\le q<4$. There exists $c_{14}>0$ such that
    \begin{align}
        \phi_{\bbZ^2}[A_1(0,R)] &\geq c_{14}\,\pi_1^+(R)^{1/2},\label{eq:Bernoulli_exponents1}\\
        \phi_{\bbZ^2}[A_{10}(0,R)] &\geq c_{14}\,\pi_1^+(R),\label{eq:Bernoulli_exponents2} \\
        \phi_{\bbZ^2}[A_{1010}(1,R)] &\geq c_{14}\,\pi_1^+(R)/R.\label{eq:Bernoulli_exponents4}
    \end{align}
\end{proposition}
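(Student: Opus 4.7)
The plan is to prove the three bounds in reverse order, building on Proposition~\ref{prop:four arm}.

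Bound~\eqref{eq:Bernoulli_exponents4} is immediate: apply Proposition~\ref{prop:four arm} with $r=1$ and absorb the constant $\pi_1^+(1)\le 1$ into $c_{14}$.

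For~\eqref{eq:Bernoulli_exponents2}, the strategy is to prove the intermediate estimate
\begin{equation*}
\phi_{\bbZ^2}[A_{1010}(1,R)] \,\le\, \frac{C}{R}\,\phi_{\bbZ^2}[A_{10}(0,R)],
\end{equation*}
which, combined with~\eqref{eq:Bernoulli_exponents4}, yields $\phi_{\bbZ^2}[A_{10}(0,R)] \ge c'\,\pi_1^+(R)$. To prove the intermediate estimate I would use the gluing machinery of Section~\ref{sec:4.2}: by well-separation and localisation (Propositions~\ref{prop:separation} and~\ref{prop:localization}), the four alternating arms of $A_{1010}(1,R)$ may be forced to land with two consecutive arms (one primal, one dual) in the right half-plane and the other two in the left half-plane. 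Conditioning on the right pair, which contributes at most $\phi_{\bbZ^2}[A_{10}(0,R)]$, the boundary conditions~\eqref{eq:CBC} induced on the complementary left sector are dominated by alternating wired/free half-plane conditions, so the conditional probability of the remaining two alternating arms is bounded by $C/R$ via the universal half-plane two-arm estimate $\phi_{\bbZ^2}[A_{10}^+(0,R)]\asymp 1/R$ from~\eqref{eq:UNIVERSAL2}; quasi-multiplicativity (Proposition~\ref{prop:quasimultiplicativity}) allows the constants to be uniform.

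Bound~\eqref{eq:Bernoulli_exponents1} then follows from~\eqref{eq:Bernoulli_exponents2} via a reverse correlation inequality. Since $A_1(0,R)$ is increasing and $A_0(0,R)$ is decreasing, applying~\eqref{eq:FKG} to the increasing pair $(A_1(0,R), A_0(0,R)^c)$ yields $\phi_{\bbZ^2}[A_{10}(0,R)] \le \phi_{\bbZ^2}[A_1(0,R)]\,\phi_{\bbZ^2}[A_0(0,R)]$. Self-duality of the critical infinite-volume measure at $p_c(q)$ gives $\phi_{\bbZ^2}[A_0(0,R)] \le C\,\phi_{\bbZ^2}[A_1(0,R)]$ (up to lattice-shift effects absorbed in the constant). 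Combining with~\eqref{eq:Bernoulli_exponents2} yields $\phi_{\bbZ^2}[A_1(0,R)]^2 \ge c\,\phi_{\bbZ^2}[A_{10}(0,R)] \ge c'\,\pi_1^+(R)$, hence $\phi_{\bbZ^2}[A_1(0,R)] \ge c''\,\pi_1^+(R)^{1/2}$.

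The hard part is the intermediate estimate in~\eqref{eq:Bernoulli_exponents2}. Although the factorisation into a bulk polychromatic two-arm times a half-plane polychromatic two-arm in a sector is intuitively clean, its rigorous implementation requires showing that the boundary conditions induced by the conditioned pair of arms on the complementary sector are uniformly dominated by alternating wired/free half-plane conditions, so that the universal half-plane two-arm estimate can be invoked. The remaining steps are short once this key inequality is in hand.
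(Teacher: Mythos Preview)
Your treatment of \eqref{eq:Bernoulli_exponents4} and \eqref{eq:Bernoulli_exponents1} matches the paper: the former is Proposition~\ref{prop:four arm} with $r=1$, and the latter follows from \eqref{eq:Bernoulli_exponents2} via FKG and duality exactly as you write.

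For \eqref{eq:Bernoulli_exponents2}, however, the paper takes a completely different route. Rather than deducing it from \eqref{eq:Bernoulli_exponents4} via your intermediate factorisation $\phi_{\bbZ^2}[A_{1010}(1,R)]\le CR^{-1}\phi_{\bbZ^2}[A_{10}(0,R)]$, the paper proves \eqref{eq:Bernoulli_exponents2} directly with the parafermionic observable. The setup is the Dobrushin domain $\Lambda_R$ with $a$, $b$ at opposite corners, and the contour $\calC'$ running along the diagonal $x=-y$ and along two sides of $\partial\Lambda_R$. Summing~\eqref{rel_vertex} over $\calC'$ and arguing as in Lemma~\ref{lem:op} gives $\sum_{e\in\alpha}|F(e)|\ge c\,R\,\pi_1^+(R)$, where $\alpha$ is the diagonal part. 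Since an edge $e\in\alpha$ at distance $r$ from $\partial\Lambda_R$ contributes only when the adjacent primal and dual vertices are connected to $(ba)$ and $(ab)^*$ respectively, one gets $|F(e)|\le C\,\phi_{\bbZ^2}[A_{10}(0,r)]$. Hence $\sum_{r\le R}\phi_{\bbZ^2}[A_{10}(0,r)]\ge c'R\,\pi_1^+(R)$, and quasi-multiplicativity together with the bound $\phi_{\bbZ^2}[A_{10}(r,R)]\ge (r/R)^{1-c''}$ converts this sum into $C'' R\,\phi_{\bbZ^2}[A_{10}(0,R)]$.

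Your factorisation strategy is plausible and the inequality you aim for is presumably true, but you correctly flag the genuine difficulty: after exploring two arms, the remaining two-arm event lives in a random domain with Dobrushin-type boundary conditions, and \eqref{eq:UNIVERSAL2} as stated is for $\phi_{\bbZ^2}$ with arms confined to a half-plane, not for such a domain. Upgrading \eqref{eq:UNIVERSAL2} to a uniform statement over all explored domains is itself a nontrivial exercise in the separation machinery of Section~\ref{sec:4.2}; it can likely be done using Theorem~\ref{thm:sRSW}, but it is not a one-line consequence of the cited results. The paper's observable argument sidesteps this entirely and is both shorter and self-contained.
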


\begin{proof}
	The first inequality follows from the second one using the FKG inequality~\eqref{eq:FKG}. 
	The third is the conclusion of Proposition~\ref{prop:four arm} with $r = 1$. 
	Therefore, it only remains to prove~\eqref{eq:Bernoulli_exponents2}.

	Consider the Dobrushin domain $\Lambda_R$, with $a$ and $b$ being the bottom right corner and top left corner of $\La_R$, respectively; see Fig.~\ref{fig:Omega_R}.
    We will proceed as in the proof of Lemma~\ref{lem:op} (and therefore only sketch the proof).
    Instead of working with the contour $\calC$ which runs along the boundary of $\La_R$, 
    we will work with the contour $\calC'$ that surrounds the vertices of the medial lattice which lie below the diagonal $x = -y$. 
    The medial edges of $\calC'$ may be split into those adjacent to the diagonal (call this set $\alpha$) and those adjacent to $\partial \La_R$ (call this set $\beta$).
    By summing~\eqref{rel_vertex} over every vertex of the medial lattice which lies strictly inside $\calC'$, we find
    $$\sum_{e\in \calC'}\eta(e)F(e)=0.$$
	Using the triangular inequality, we obtain that
    $$\sum_{e\in \alpha}|F(e)|\ge |\sum_{e\in \beta}\eta(e)F(e)| \geq  c\,R\,\pi_1^+(R),$$
    where the second inequality was already proved in Lemma~\ref{lem:op}.
    
    Notice now that, for any $e \in \alpha$, a configuration contributes to $F(e)$ only when the primal and dual vertices separated by $e$ are connected inside $\La_{R}$ 
    to $(ba)$ and $(ab)^*$ by primal and dual paths, respectively. 
    Thus, due to the mixing property~\eqref{eq:mix2}, if $r$ denotes the distance from $e$ to $\partial \La_R$, then $|F(e)|\leq C \phi_{\bbZ^2}[A_{10}(0,r)]$. 
	In conclusion, 
	\begin{align*}
		\sum_{r = 1}^R \phi_{\bbZ^2}[A_{10}(0,r)]\geq c' \sum_{e\in \alpha}|F(e)|\ge c\, c'\, R\,\pi_1^+(R).
	\end{align*}
	Finally, it is a classic consequence of the quasi-multiplicativity~\eqref{eq:MULTIPLICATIVITY}  and the bound $\phi_{\bbZ^2}[A_{10}(r,R)]\ge (r/R)^{1-c''}$
	(which follows from~\eqref{eq:hhg} by standard arguments)
	that the left-hand side of the above is bounded from above by $c''' R\,\phi_{\bbZ^2}[A_{10}(0,R)]$. 
	Plugging this estimate in the last displayed equation gives~\eqref{eq:Bernoulli_exponents2}.\end{proof}

This proposition implies that one deduces bounds on the left-hand sides of the three inequalities from bounds on $\pi_1^+(R)$. For $q=1$, \cite{PonIkh} showed that $\pi_1^+(R)R^{1/3}$ is bounded away from 0 and $\infty $ uniformly in $R$ so that 
\begin{align*}
   \phi_{\bbZ^2}[A_1(0,R)] &\geq c/R^{1/6},\\
   \phi_{\bbZ^2}[A_{10}(0,R)] &\geq c/R^{1/3},\\
   \phi_{\bbZ^2}[A_{1010}(1,R)] &\geq c/R^{4/3}.\\
\end{align*}
While the result of \cite{PonIkh} is sharp, the bound we obtain are not  (see \cite{BefDum13} for references on the case of site percolation on the triangular lattice to compare to the following bounds).
Note also that it is  elementary to show from~\eqref{eq:hhg} and~\eqref{eq:UNIVERSAL2} that for $q=1$, $\pi_1^+(R)\ge c/R^{1/2}$,
so that
    \begin{align*}
        \phi_{\bbZ^2}[A_1(0,R)] &\geq c/R^{1/4},\\
        \phi_{\bbZ^2}[A_{10}(0,R)] &\geq c/R^{1/2},\\
        \phi_{\bbZ^2}[A_{1010}(1,R)] &\geq c/R^{3/2}.
    \end{align*}
We conclude this section by a proof that the inequality $\pi_1^+(R)\ge c/R^{1/2}$ is in fact valid for every $q\in[1,2]$, thus extending the previous bounds to this context.

\begin{proposition}\label{prop:one_arm1/2}
    For $q\in[1,2]$, there exists $c_{15}=c_{15}(q)>0$ such that for every $R\ge1$,
    \begin{equation*}
    	\pi_1^+(R)\ge c_{15}R^{-1/2}.
    \end{equation*}
\end{proposition}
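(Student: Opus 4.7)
}
The plan is to combine the half-plane two-arm estimate of~\eqref{eq:hhg}, which gives $\pi_1^+(R)\,\pi_0^+(R) \geq c/R$, with an additional comparison between the primal and dual half-plane one-arm probabilities that is valid precisely in the regime $q\in[1,2]$. I first recall that by self-duality of the random-cluster model at the critical point (where free primal boundary conditions correspond to wired dual boundary conditions), one has, up to a translation of the lattice, $\pi_0^+(R) = \phi_\bbH^1[A_1^+(0,R)]$. By the comparison between boundary conditions~\eqref{eq:CBC} this gives $\pi_0^+(R) \geq \pi_1^+(R)$, so \eqref{eq:hhg} by itself only produces $\pi_0^+(R) \geq cR^{-1/2}$, whereas we want the analogous bound for $\pi_1^+(R)$.

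The core of the proof is therefore to establish, for $q \in [1,2]$, the reverse inequality
$$ \pi_0^+(R) \leq C\,\pi_1^+(R). $$
Combined with~\eqref{eq:hhg}, this immediately gives $\pi_1^+(R)^2 \geq (c/C)\,R^{-1}$, and hence the conclusion $\pi_1^+(R) \geq c_{15} R^{-1/2}$. Note that this comparison must fail for large~$q$ (the scaling limit conjectures predict very different behaviors of $\pi_1^+$ and $\pi_0^+$ as $q\to 4^-$), which shows that any approach to this step must genuinely use $q\leq 2$.

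To prove $\pi_0^+(R) \leq C\,\pi_1^+(R)$, I would apply the parafermionic observable from Section~\ref{sec:3.1} on the Dobrushin half-box $\Omega = [-R,R] \times [0,R]$ with $a = (R,0)$ and $b = (-R,0)$, so that the wired arc $(ba)$ is the bottom segment and the free arc $(ab)$ is the top and the two sides. Summing the vertex relation~\eqref{rel_vertex} yields~\eqref{eq:rel_vertex}, namely
\[
	\sum_{e\in\alpha}\eta(e)F(e) \;+\; \sum_{e\in\beta}\eta(e)F(e) \;+\; \eta(e_a)F(e_a)+\eta(e_b)F(e_b) \;=\; 0,
\]
where $\alpha$ and $\beta$ are the medial edges adjacent to the free and wired arcs, respectively. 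Following the pairing argument of Lemma~\ref{lem:op}, the bottom edges in $\beta$ can be grouped so that $|\sum_{e\in\beta}\eta(e)F(e)|$ is bounded below (up to the constant $2\cos[(1+\sigma)\pi/4]>0$, valid for $q<4$) by $\sum_{y\in(ba)^*}\phi_\Omega^{0/1}[y\xleftrightarrow{*}(ab)^*]$, which by \eqref{eq:wRSW} and the mixing property~\eqref{eq:mix2} is at least $c\,R\,\pi_0^+(R)$. A symmetric pairing can be performed on the top edges of $\alpha$, producing contributions whose magnitude is at least $c\,R\,\pi_1^+(R)$.

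The decisive step is the angular analysis: for $\sigma \leq 1/2$, i.e.\ $q\in[1,2]$, the complex arguments of the top-arc contributions and the bottom-arc contributions are such that, modulo the $O(1)$ end terms $\eta(e_a)F(e_a)+\eta(e_b)F(e_b)$, the identity forces the two magnitudes to be comparable, yielding $R\,\pi_0^+(R) \leq C\,R\,\pi_1^+(R)$ for $R$ large enough (smaller $R$ being absorbed into the constant $c_{15}(q)$). For $\sigma > 1/2$ the relevant directions rotate through an angle larger than $\pi/2$ and the triangle inequality can no longer be reversed in this way; this is the reason behind the restriction $q\leq 2$, and arguably the main obstacle of the proof, requiring a careful bookkeeping of windings of the exploration path near the two flat pieces of the boundary analogous to (but technically distinct from) the angular computations in Lemma~\ref{lem:op} and Proposition~\ref{prop:beta}.
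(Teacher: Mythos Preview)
Your strategy of reducing to the comparison $\pi_0^+(R)\le C\,\pi_1^+(R)$ is sound in principle, but the observable computation you propose on the Dobrushin half-box $[-R,R]\times[0,R]$ does not yield it. The gap is in the identification of the $\beta$-contribution with $\pi_0^+$. For a dual vertex $y\in(ba)^*$ adjacent to the \emph{wired} bottom arc, the dual picture near $y$ has \emph{free} boundary conditions (the primal wired edges force the crossing dual edges to be closed). Hence, by self-duality at $p_c$, the probability $\phi_\Omega^{0/1}[y\xleftrightarrow{*}(ab)^*]$ is comparable to a \emph{free}-boundary one-arm, i.e.\ to $\pi_1^+(R)$, not to $\pi_0^+(R)$. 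This is exactly the comparison used in~\eqref{eq:h13} within Lemma~\ref{lem:op}. Likewise, the top contributions of $\alpha$ are also $\pi_1^+$-like, so the contour identity only produces the tautology $R\,\pi_1^+(R)\asymp R\,\pi_1^+(R)$, and no ``angular analysis'' for $\sigma\le 1/2$ can manufacture a $\pi_0^+$ from quantities that are all governed by $\pi_1^+$.

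The paper circumvents this entirely by working in the infinite strip $\Omega_R=\bbZ\times[0,2R]$ with the degenerate choice $a=b=0$, so that the wired arc is a single point and there is no Dobrushin mixture of arm types. The endpoint contribution $\eta(e_a)F(e_a)+\eta(e_b)F(e_b)$ is a fixed nonzero number, and the crucial use of $q\le 2$ (i.e.\ $\sigma\le 1/2$) is that the bottom-arc contribution is \emph{collinear and of the same sign}, hence may be dropped. One obtains directly
\[
\sum_{x\in\bbZ\times\{2R\}}\phi_{\Omega_R}^0[0\longleftrightarrow x]\ \ge\ \cos\tfrac{\pi}{4}(3\sigma-1)\ >\ 0,
\]
and each summand is bounded above by $C\,\pi_1^+(R)^2\,c^{|x|/R}$ via two independent half-plane one-arm estimates (one from $0$, one from $x$) and mixing. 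Summing gives $R\,\pi_1^+(R)^2\ge c'$, which is the claim. In short, the restriction $q\le 2$ enters through a sign in the observable on a domain with a point-sized wired arc, not through a comparison of $\pi_0^+$ and $\pi_1^+$.
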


\begin{proof}
We apply the parafermionic observable to the graph $\Omega_R:=\bbZ\times[0,2R]$ with $a=b=0$. 
Using that the contour integral on the boundary vanishes, 
and following the same lines as when going from~\eqref{eq:5.15} to~\eqref{eq:5.17}, we find that
\begin{equation}\label{eq:5.30}
\sum_{x\in\bbZ\times\{2R\}}\phi_{\Omega_R}^0[0\longleftrightarrow x]
\ge \tfrac12|\eta(e_a)F(e_a)+\eta(e_b)F(e_b)|= \cos \tfrac{\pi}4(3\sigma - 1)  >0.
\end{equation}
At this stage we used that $1 \leq q \leq 2$ and the horizontal symmetry of the strip to show that the contribution to the contour integral of medial edges on the bottom of $\Omega_R$ is positively proportional to that of $e_a$ and $e_b$. 

Now, the mixing property~\eqref{eq:mix2} and crossing estimates~\eqref{eq:wRSW} easily lead to the existence of $c,C\in (0,1)$ such that 
\begin{equation*}
	\phi_{\Omega_R}^0[0\longleftrightarrow x]\le C \pi_1^+(R)^2 c^{|x|/R},
\end{equation*}
where the second term accounts for vertices $x\in\bbZ\times\{2R\}$ that are far on the left or right. Plugging this estimate in~\eqref{eq:5.30} gives
\begin{equation*}
	C'R\pi_1^+(R)^2\ge c',
\end{equation*}
and therefore the claim.
\end{proof}

\begin{figure}
	\begin{center}
		\includegraphics[width = 0.45\textwidth]{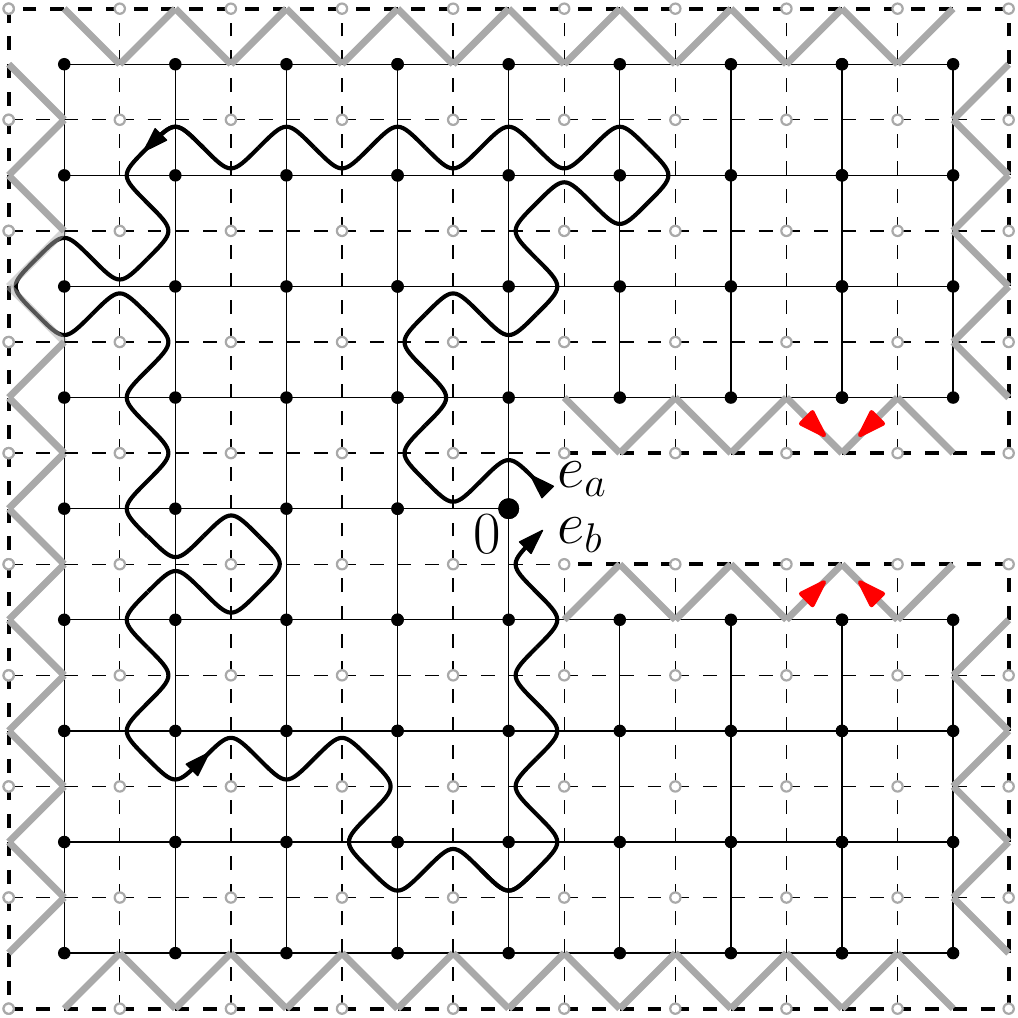}\hspace{0.05\textwidth}
		\includegraphics[width = 0.45\textwidth]{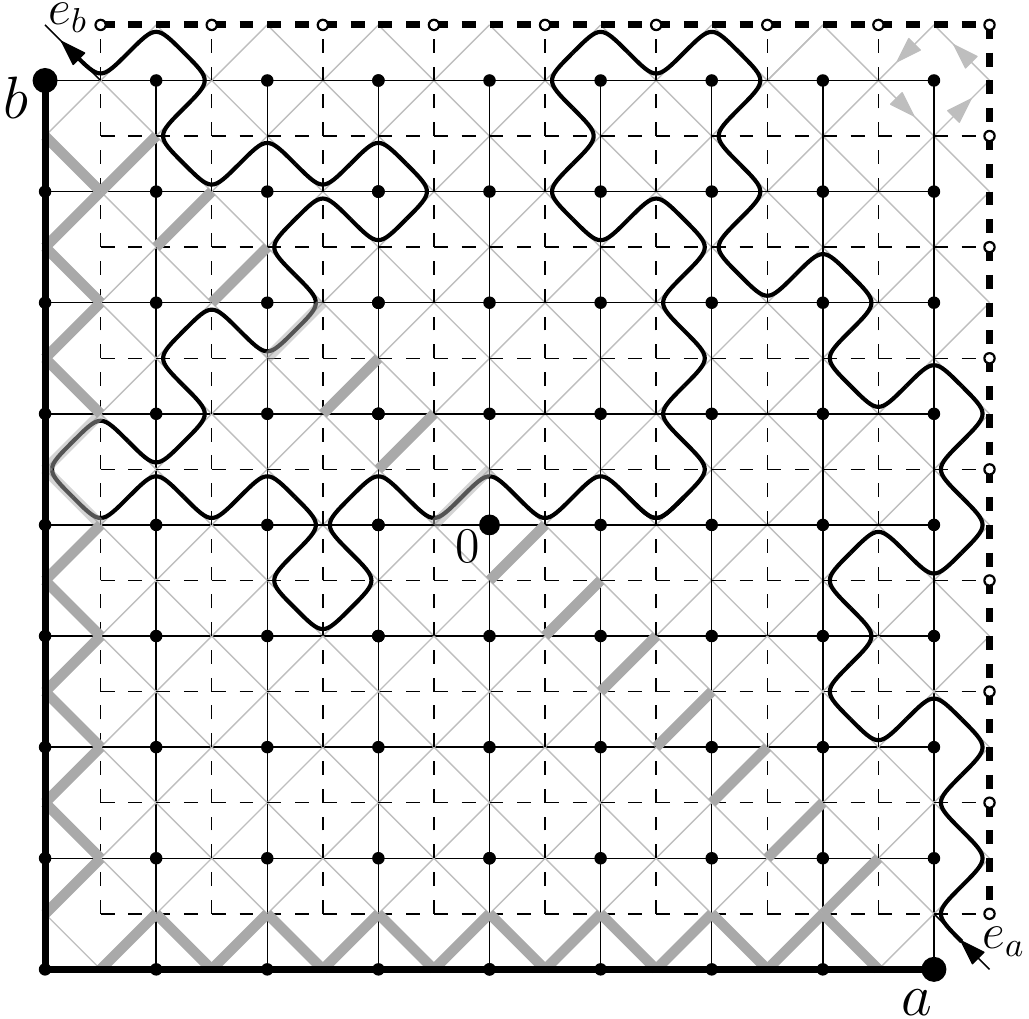}
	\caption{{\em Left:} The domain $\Omega_R$ used in the proof of Proposition~\ref{prop:beta} is obtained from $\La_{3R}$ by removing the slit right of $0$. 
	The medial edges $e_a$ and $e_b$ are right of $0$; all other edges of $\calC$ are bold. 
	The red arrows indicate the orientation of $\eta(e)$ for four edges of $\beta$. 
	Due to the symmetry of $\Omega_R$, the absolute value of $F(e)$ for these four edges is equal;
	their complex arguments are
	${ \frac{7\pi}4(\sigma+1)}$, ${- \frac{7\pi}4(\sigma + 1)}$, 
	${ \frac{5\pi}4(\sigma+1)}$ and ${- \frac{5\pi}4(\sigma+1)}$, respectively (up to an additive constant).
	{\em Right:} The Dobrushin domain used in the proof of Proposition~\ref{lem:perco}; the medial edges of the contour $\calC'$ are bold. }
	\label{fig:Omega_R}
	\end{center}
\end{figure}

\subsection{Bounds on the scale-to-scale connection probability  in a half-plane with free boundary conditions}

The goal of this section is the following result, which is a refinement and an extension of the previous proposition.

\begin{proposition}
    For every $1\le q<2$, there exists $c=c(q)>0$ such that for every $r\le R$,
    \begin{equation}
    	\phi_\bbH^0[A_1(r,R)]\ge c(r/R)^{1/2-c}.
    \end{equation}
    For every $2<q<4$, there exists $c=c(q)>0$ such that for every $r\le R$,
    \begin{equation}
	    \phi_\bbH^0[A_1(r,R)]\le \tfrac1c(r/R)^{1/2+c}.
    \end{equation}
\end{proposition}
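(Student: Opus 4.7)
Proof proposal. The strategy is to sharpen the parafermionic-observable computation of Proposition~\ref{prop:one_arm1/2} so as to detect the strict inequality $\sigma(q)\ne 1/2$ when $q\ne 2$; recall that $\sigma$ is defined by~\eqref{eq:hahaha} and equals $1/2$ precisely at $q=2$. Using Theorem~\ref{thm:RSWquads} to extend the proofs of Propositions~\ref{prop:separation} and~\ref{prop:quasimultiplicativity} to the half-plane with free boundary conditions, one first reduces the scale-to-scale statement to the single-scale estimates
\[
\pi_1^+(R)\ge c\,R^{-1/2+c}\ \ (q<2),\qquad \pi_1^+(R)\le C\,R^{-1/2-c}\ \ (q>2),
\]
since the conclusion will follow from the relation $\phi_\bbH^0[A_1(r,R)]\asymp \pi_1^+(R)/\pi_1^+(r)$ and the fact that $\pi_1^+(r)$ is uniformly bounded.

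I would then revisit the contour identity~\eqref{eq:rel_vertex} on the strip $\Omega_R=\bbZ\times[0,2R]$ with Dobrushin marked point $a=b=0$. The proof of Proposition~\ref{prop:one_arm1/2} exploited only the modulus inequality $\sum_{e\in\alpha}|F(e)|\ge |\eta(e_a)F(e_a)+\eta(e_b)F(e_b)|$, discarding the phase information carried by the complex sum $\sum_{e\in\alpha}\eta(e)F(e)$. The refinement is to decompose $F(e)$ according to the winding class of the exploration path between $e$ and $e_b$, tracking the resulting phase $e^{{\rm i}\sigma W}$: when $\sigma=1/2$ the real and imaginary parts of the identity carry equivalent information (this is the Ising case), whereas when $\sigma\ne 1/2$ projecting the identity on a direction transverse to $\eta(e_a)F(e_a)+\eta(e_b)F(e_b)$ yields a new independent inequality involving the universal half-plane two-arm event of exponent~$1$ given by~\eqref{eq:UNIVERSAL2}.

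Combining the modulus estimate with this new phase estimate — whose sign in front of the dominant term is opposite in the regimes $\sigma<1/2$ and $\sigma>1/2$ — should yield an improved bound of the form $R^{1-\kappa(q)}\pi_1^+(R)^2\ge c$ for $q<2$ and $R^{1+\kappa(q)}\pi_1^+(R)^2\le C$ for $q>2$, with $\kappa(q)>0$ in both regimes. This translates into $\pi_1^+(R)\ge c\,R^{-(1-\kappa)/2}$ when $q<2$ and $\pi_1^+(R)\le C\,R^{-(1+\kappa)/2}$ when $q>2$, which is the desired improvement. The main obstacle will be making the winding-class decomposition quantitative: bounding each class contribution requires controlling arm probabilities across all intermediate scales, which will combine Theorem~\ref{thm:RSWquads} with the arm-event machinery of Section~\ref{sec:proof_arm_events} (in particular the quasi-multiplicativity of Proposition~\ref{prop:quasimultiplicativity} and the universal exponents of Proposition~\ref{prop:universal}), and may require replacing the strip $\Omega_R$ by a better-adapted Dobrushin domain, akin to the slit domain used in the proof of Proposition~\ref{prop:beta}.
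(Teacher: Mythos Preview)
Your proposal and the paper's proof differ substantially in the mechanism that produces the $\pm c$ improvement over the exponent $1/2$, and I do not see how your mechanism can be made to work.

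You propose to stay in the symmetric strip $\Omega_R=\bbZ\times[0,2R]$ with $a=b=0$ and extract a second, independent real inequality from the contour identity~\eqref{eq:rel_vertex} by decomposing $F(e)$ into winding classes and projecting onto a direction transverse to $\eta(e_a)F(e_a)+\eta(e_b)F(e_b)$. The difficulty is that the horizontal reflection symmetry of the strip forces the contour sum to lie on a single real line: the contributions of symmetric pairs of boundary edges combine into real multiples of a fixed phase, which is exactly what was used in Proposition~\ref{prop:one_arm1/2}. So the ``transverse projection'' vanishes identically, and there is no second inequality to harvest. Breaking the symmetry by a winding-class decomposition does not help either: the subdominant winding classes are governed by higher arm events and are exponentially smaller in $\log(R)$ than the leading one, so they cannot shift the exponent. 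Your closing remark that one ``may require replacing the strip by a better-adapted Dobrushin domain'' is in fact the whole point, and the proposal does not identify what that domain should be.

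The paper's proof does not try to squeeze more out of the phase in a fixed domain; it changes the domain geometry. For $q<2$ it replaces the strip by a trapeze $\Omega_{R,r,k}$ whose bottom is a short wired segment of length $\sim kr$ and whose lateral sides are slanted free boundaries of slope $1/k$. The parafermionic identity then has three boundary pieces: the top (controlled by $R\pi_1^+(R)$ times the probability of reaching the top from the short wired base), the wired base (controlled by $\sum_{\ell\le r}\pi_1^+(\ell)\asymp r\pi_1^+(r)$), and the slanted free sides. The signs of the slanted contributions are governed by $\sigma$: for $q<2$ the horizontal steps give positive contributions and the vertical steps negative ones, and by choosing $k=k(q)$ large one uses Theorem~\ref{thm:sRSW} to make the net lateral contribution nonnegative. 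The polynomial gain then comes not from phase but from geometry: connecting the short base to height $R$ inside the trapeze forces passage through a narrowing corridor, and Theorem~\ref{thm:sRSW} yields an extra factor $(r/R)^{c'}$ on top of $\pi_1^+(r,R)$. This is what upgrades $R\pi_1^+(R)^2\gtrsim 1$ to $R\pi_1^+(R)\pi_1^+(r,R)(r/R)^{c'}\gtrsim r\pi_1^+(r)$, giving $\pi_1^+(r,R)\gtrsim (r/R)^{1/2-c}$ by quasi-multiplicativity. For $2<q<4$ the signs on the slanted sides reverse, and one uses the complementary domain $\Omega'$ (the part of a wider strip outside the trapeze) to obtain the opposite inequality.
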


\begin{figure}
	\begin{center}
		\includegraphics[width = 0.85\textwidth]{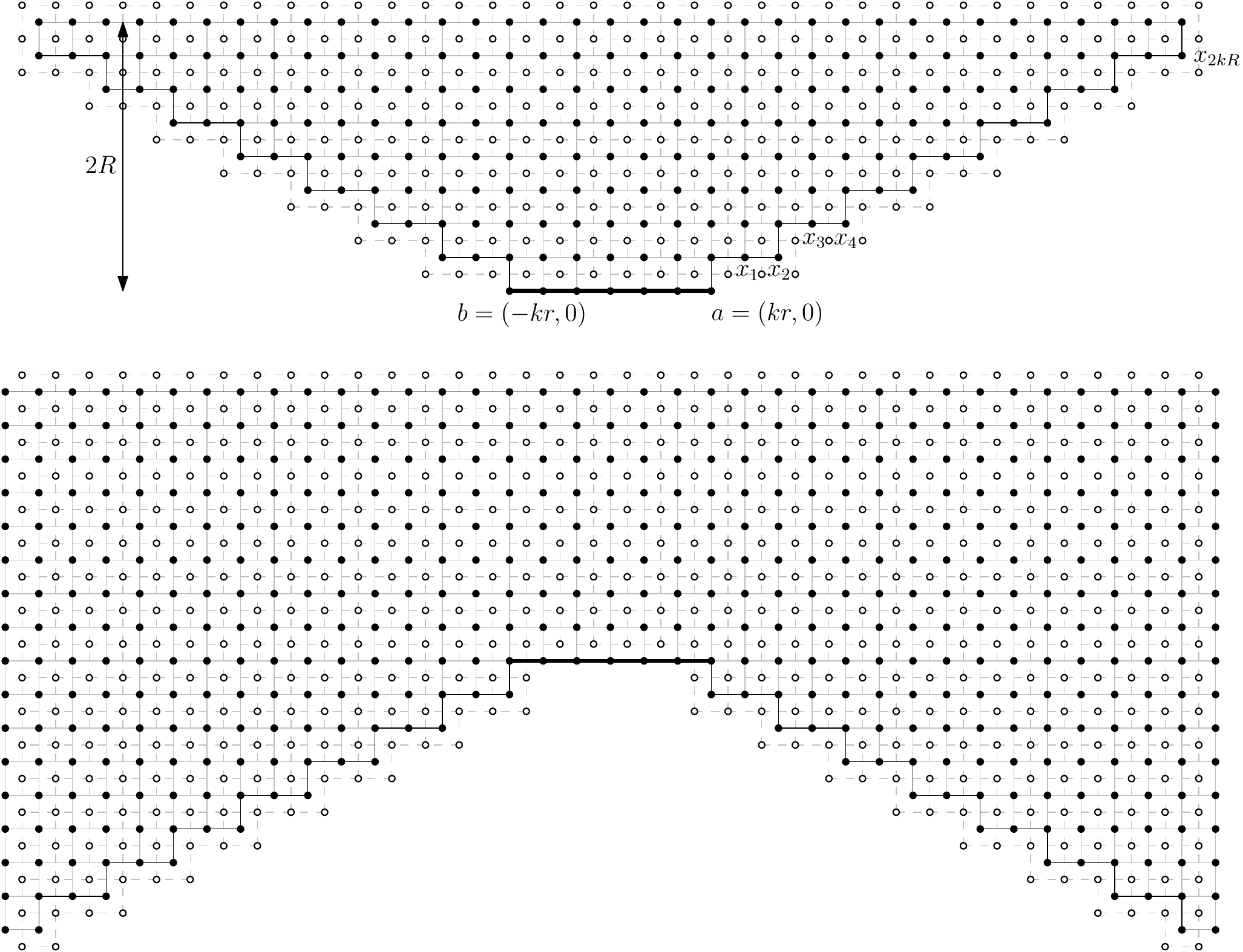}
		\caption{The domains $\Omega$ (above) and $\Omega'$ (below) used for $q<2$ and $q>2$, respectively, with $k = 2$.  
		Outside of the depicted section, $\Omega'$ is equal to the strip $\bbZ \times [-2R,2R]$. }	
		\label{fig:stair}
	\end{center}
\end{figure}

\begin{proof}
We start with the case $1\le q<2$. We improve on the proof of Proposition~\ref{prop:one_arm1/2} by choosing a better domain. 
Fix some integer $k \ge 1$ (which will be chosen later independently of $r,R$) and integers $r \le R$.
Consider the domain $\Omega_{R,r,k}=\Omega$ defined as the subset of the strip $\mathbb Z\times [0,2R]$ composed of vertices above the two half-lines $(0,-r )+ie^{\pm i\alpha}\bbR_+$ where $\tan\alpha:=1/k$; see Fig.~\ref{fig:stair}. 
We consider the Dobrushin boundary conditions $0/1$ with $a=(kr,0)$ and $b=(-kr,0)$, which are wired on the segment $(ba):=\bbZ\cap\partial\Omega$, and free elsewhere.  

This domain is an approximation of a (horizontal symmetry of a) trapeze. 
Index the vertices of $\Omega$ that lie strictly inside $\bbZ_+\times[0,2R]$ and that have at most three neighbours in the $\Omega$ as $x_1,\dots,x_{2kR}$, from bottom to top, as in Fig.~\ref{fig:stair}. 
This indexing ensures that for each $n =0,\dots, 2R-1$, $x_{kn+1},\dots,x_{k(n+1)}$ form a horizontal segment at height $n+1$.

Using \eqref{eq:rel_vertex} for $\Omega$ and its vertical symmetry, after an appropriate change of phase, we find 
\begin{align*}
	\sum_{x\in\bbZ\times\{2R\}}\phi_{\Omega}^{0/1}[x\longleftrightarrow (ba)]
	=c_q^{(ba)}\Big(1+\!\!\!\sum_{u\in (ba)^*}\phi_{\Omega}^{0/1}[u\stackrel{*}\longleftrightarrow (ab)^*]\Big)
	+\sum_{i=1}^{2kR}c_q(i)\phi_{\Omega}^{0/1}[x_i\longleftrightarrow (ba)],
\end{align*}
with constants $c_q^{(ba)} >0$ and $c_q(i)$ for $i \geq 1$ given by 
\begin{equation}
    c_q(i):=\begin{cases}
	    c_q^+&\text{ if }k\text{ does not divide }i,\\
    	c_q^-&\text{ if }k\text{ divides }i,
	    \end{cases}
\end{equation}
with $c_q^+>0$ and $c_q^-<0$.
We omit the details of this computation as it is similar to those previously performed. Note however that the signs of $c_q^+$ and $c_q^-$ are due $q$ being strictly smaller than $2$. 
Now, choose $k=k(q)$ large enough (independent of $r$ and $R$) that for every $n$,
\begin{align*}
	\sum_{i=kn+1}^{k(n+1)}c_q(i)\phi_{\Omega}^{0/1}[x_i\leftrightarrow (ba)]
	\ge 0.
\end{align*}
This may be done since, due to Theorem~\ref{thm:sRSW}, 
the probability that $x_{k(n+1)}$ is connected to $(ba)$ is much smaller than the probability that one of the $x_i$ with $kn<i<k(n+1)$ is.

Using this fact and Theorem~\ref{thm:sRSW}, we obtain that
\begin{align*}
	CR\pi_1^+(R) \phi_{\Omega}^{0/1}[(ba)\longleftrightarrow \bbZ\times\{R\}]
	&\ge  \sum_{x\in\bbZ\times\{2R\}}\phi_{\Omega}^{0/1}[x\longleftrightarrow (ba)]\\
	&\ge c_q^{(ba)}\Big(1+\!\!\!\sum_{u\in (ba)^*}\phi_{\Omega}^{0/1}[u\stackrel{*}\longleftrightarrow (ab)^*]\Big)\\
	&\ge c \sum_{\ell=1}^r\pi_1^+(\ell),
\end{align*}
for constants $c,C >0$ independent of $r$ and $R$. 
Finally, a standard application of Theorem~\ref{thm:sRSW} implies the existence of $c' = c'(k)>0$ such that 
\[
\phi_{\Omega}^{0/1}[(ba)\leftrightarrow \bbZ\times\{R\}]\le c'(r/R)^{c'}\pi_1^+(r,R).
\]
Combined with the above, this implies that
\[
C'R\pi_1^+(R)\pi_1^+(r,R)(r/R)^{c'}\ge  \sum_{\ell=1}^r\pi_1^+(\ell)\ge r\pi_1^+(r).
\]
The claim follows by quasi-multiplicativity.
\bigbreak

We now turn to the case $2<q< 4$. 
We proceed in a similar fashion, but using a slightly different domain. 
Define the domain $\Omega_{R,r,k}'=\Omega'$ obtained as the reflection with respect to the horizontal axis of $\bbZ\times[-2R,2R]\setminus\Omega_{R,r,k}$; see Fig.~\ref{fig:stair}. 
A similar reasoning to before, this time using that the contribution on the lateral sides is negative for $k= k(q)$ large enough, implies that 
\begin{align*}
    \sum_{x\in\bbZ \times \{2R\}}\phi_{\Omega'}^{0/1}[x\longleftrightarrow (ba)]
    \le  c_q^{(ba)}\Big(1+\!\!\!\sum_{u\in (ba)^*}\phi_{\Omega'}^{0/1}[u\longleftrightarrow (ab)^*]\Big).
\end{align*}
In this new domain, Theorem~\ref{thm:sRSW} provides a constant $c = c(k) > 0$ such that 
$$ \phi_{\Omega'}^{0/1}[(ba)\leftrightarrow \bbZ\times\{R\}]\ge c(R/r)^{c}\pi_1^+(r,R).$$
The same reasoning as above yields positive constants $c', C', C''$ such that
\begin{align*}
	c'R\pi_1^+(R) (R/r)^{c}\pi_1^+(r,R)
	\le \sum_{x\in\bbZ \times \{2R\}}\phi_{\Omega'}^{0/1}[x\longleftrightarrow (ba)]
	\le C'  \sum_{k=1}^r\pi_1^+(k)
	\le C'' r\pi_1^+(r),
\end{align*}
with the last inequality due to Lemma~\ref{lem:mn}.
The proof follows by quasi-multiplicativity.
\end{proof}

\section{Properties of any sub-sequential limit}\label{sec:6}

In this section, we describe properties of sub-sequential limits of the family of cluster boundaries of the critical random-cluster model.

\subsection{Existence of sub-sequential limits}

To start, we recall the tightness criterion of \cite{AizBur99} for families of interfaces, formulated here for the random-cluster measure. 
The criterion may be shown to hold using the pre-existent crossing estimate~\eqref{eq:RSW}.

Let $\Omega$ be an open subset of the plane, and define $\Omega_\delta$ as the subgraph of $\delta\bbZ^2$ induced by the edges included in $\Omega$. 
Let $\phi_{\Omega_\delta}^0$ and $\phi_{\Omega_\delta}^1$ be the critical random-cluster measures on $\Omega_\delta$ with free and wired boundary conditions respectively. Also, let $\mathcal F_\delta$ be the collection of interfaces between the primal and dual clusters in $\Omega_\delta$. 

\begin{theorem}[\cite{AizBur99}]\label{thm:tightness}
	Fix $i \in \{0,1\}$ and $q \in [1,4]$. 
	Suppose that for each $k\geq 2$ there exist constants $C(k)>0$ and $\lambda(k)>0$, with $\la(k)$ tending to infinity with~$k$ such that, 
	for all $\delta > 0$ and any annulus $\Lambda_R(x)\setminus\Lambda_r(x) \subset \Omega$ with $\delta\le r\le R\le 1$,
	\begin{equation}\tag{{\bf H1}}\label{eq:H1}
		\phi_{\Omega_\delta}^i[\Lambda_R(x)\setminus\Lambda_r(x)\text{ is traversed by $k$ separate paths of }\mathcal F_\delta]\le C(k)(\tfrac rR)^{\lambda(k)}.
	\end{equation}
	Then the random variables $(\mathcal F_\delta)_{\delta > 0}$ form a tight family for the Hausdorff metric on collections of loops (see \cite{AizBur99} for a definition).
	Moreover, there exists $c_{16}>0$ such that any sub-sequential limit of the variables above (for the convergence in distribution) 
    is supported on collections of loops which have Hausdorff dimension between $1+c_{16}$ and $2-c_{16}$.
\end{theorem}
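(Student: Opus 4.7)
The plan decomposes the theorem into \emph{tightness}, an \emph{upper bound} on Hausdorff dimension, and a \emph{lower bound}. For the first two, both are immediate consequences of the Aizenman--Burchard criterion \cite{AizBur99}: hypothesis~\eqref{eq:H1} with $\lambda(k)\to\infty$ is precisely the regularity assumption they isolate, and their main theorem yields both tightness of $(\calF_\delta)_{\delta>0}$ in the Hausdorff topology on collections of loops and the upper bound $\dim_{\rm H}\calF\le 2-\lambda(2)/2$ on any sub-sequential limit $\calF$. I would cite their theorem directly for this.

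For the lower bound, I would run a Frostman energy argument driven by the two-arm event in the bulk. Proposition~\ref{lem:perco} combined with the iterate $\pi_1^+(R)\ge cR^{-1+c_3}$ of Lemma~\ref{lem:mn} gives $\phi_{\bbZ^2}[A_{10}(0,R)]\ge cR^{-1+c_3}$, which the mixing property~\eqref{eq:mix2} transfers to $\phi_{\Omega_\delta}^i[A_{10}(x,\delta,r_0)]\ge c\,\delta^{1-c_3}$ for $x$ at macroscopic distance from $\partial\Omega$ and some fixed $r_0>0$. Define random Borel measures on $U$ by $d\mu_\delta(x):=\delta^{c_3-1}\mathbf{1}_{A_{10}(x,\delta,r_0)}\,dx$, so that $\mathbb{E}[\mu_\delta(U)]\ge c_0>0$. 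Quasi-multiplicativity (Proposition~\ref{prop:quasimultiplicativity}) and mixing yield the second-moment bound $\phi^i_{\Omega_\delta}[A_{10}(x,\delta,r_0)\cap A_{10}(y,\delta,r_0)]\le C\,\delta^{2(1-c_3)}\,|x-y|^{-(1-c_3)}$ for $|x-y|\ge\delta$, which translates into a uniform bound on the $s$-energy $\iint|x-y|^{-s}\,d\mu_\delta(x)\,d\mu_\delta(y)$ for every $s<1+c_3$.

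By the Paley--Zygmund inequality, with positive probability (uniform in $\delta$) one has $\mu_\delta(U)\ge c_0/2$ and the $s$-energy bounded simultaneously. Any weak sub-sequential limit $\mu$ of $\mu_\delta$ is therefore non-trivial with positive probability and has finite $s$-energy for every $s<1+c_3$. A direct check shows that $A_{10}(x,\delta,r_0)$ forces $x$ to lie within $L^\infty$-distance $\delta$ of $\calF_\delta$, so the support of $\mu$ is contained in the corresponding sub-sequential limit $\calF$. Frostman's lemma then gives $\dim_{\rm H}(\calF)\ge 1+c_3/2$ with positive probability; an ergodicity-type argument (applied to a countable dense family of regions $U$) upgrades this to an almost-sure statement, yielding the theorem with $c_{16}:=\min\{c_3/2,\lambda(2)/2\}$.

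The main obstacle is the second-moment bound. Its derivation via Proposition~\ref{prop:quasimultiplicativity} involves decomposing the joint two-arm event at scale $\delta$ around both $x$ and $y$ into two-arm events around each of $x,y$ up to scale $|x-y|$ combined with a four-arm event beyond that scale, and the resulting exponents must be tracked precisely to confirm the integrability stated above. A secondary difficulty is verifying that the weak limit $\mu$ is indeed supported on the Hausdorff limit $\calF$, which requires care when passing simultaneously between weak convergence of measures and Hausdorff convergence of their supports.
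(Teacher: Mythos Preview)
The paper does not prove this theorem at all: it is stated as a result of Aizenman--Burchard and simply cited (note the attribution \texttt{[AizBur99]} in the theorem header). Your treatment of tightness and the upper dimension bound---cite \cite{AizBur99} directly---is therefore exactly what the paper does, and the paper does the same for the lower bound.

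Your Frostman/energy argument for the lower bound is a reasonable self-contained substitute, but two points deserve flagging. First, the theorem is phrased as a conditional statement (hypothesis~\eqref{eq:H1} implies the conclusions) valid for all $q\in[1,4]$, yet your argument invokes Proposition~\ref{lem:perco}, Lemma~\ref{lem:mn}, and Proposition~\ref{prop:quasimultiplicativity}, each of which is proved in the paper only for $q<4$. So your route establishes the lower bound for the random-cluster model with $q<4$ specifically, not the general implication claimed; in particular it leaves $q=4$ uncovered. The Aizenman--Burchard argument for roughness uses only~\eqref{eq:H1} together with RSW-type input of the form~\eqref{eq:RSW}, which \emph{is} available for all $q\in[1,4]$; this is what the paper implicitly relies on (see also the remark following the theorem, where the two-arm lower bound~\eqref{eq:2-arm bound} is attributed to the argument of \cite{AizBur99}, not to the paper's own $q<4$ estimates).

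Second, a technical point on your Frostman construction: normalising $\mu_\delta$ by $\delta^{c_3-1}$ presumes the two-arm probability is comparable to $\delta^{1-c_3}$, but you only have a lower bound of that order. If the true decay is slower, your first moment diverges and Paley--Zygmund does not yield a non-trivial limit with bounded energy. The standard fix is to normalise by $\phi_{\Omega_\delta}^i[A_{10}(x,\delta,r_0)]^{-1}$ itself; quasi-multiplicativity then lets you bound the second moment via
\[
\phi[A_{10}(x,\delta,r_0)\cap A_{10}(y,\delta,r_0)]\le C\,\phi[A_{10}(\delta,|x-y|)]\,\phi[A_{10}(\delta,r_0)]
\le C'\,\frac{\phi[A_{10}(\delta,r_0)]^2}{\phi[A_{10}(|x-y|,r_0)]},
\]
and only at this last step do you insert the \emph{lower} bound $\phi[A_{10}(|x-y|,r_0)]\ge c|x-y|^{1-c_3}$ to control the energy integral. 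With that adjustment your argument goes through for $q<4$.
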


As mentioned above, it is a consequence of~\eqref{eq:RSW} that~\eqref{eq:H1} is satisfied for both $i=0$ and $i = 1$, and any $q \in [1,4]$.
Indeed, for $\Lambda_R(x)\setminus\Lambda_r(x)$ to be traversed by $k$ separate crossings of $\mathcal F_\delta$, 
a $k$-alternating arm event needs to occur in the annulus around $x$. 
It is standard to deduce from~\eqref{eq:RSW} that the probability of such an event is bounded as required, 
uniformly in $r,R$ and the boundary conditions on the annulus.

\begin{remark}
A close inspection of the proof of \cite{AizBur99} shows that it suffices to have the existence of $k$ such that $\lambda(k)>2$. 
We deduce from~\eqref{eq:SIX-ARM} that $k=6$ works in our setting. 
See \cite{KemSmi12} for alternative criteria that are implied by Theorem~\ref{thm:sRSW}.
\end{remark}

\begin{remark}
	The argument of \cite{AizBur99} shows that the interfaces are naturally fractal. 
	One implication of this fact is the existence of $c_{17}>0$ such that 
	\begin{equation}\label{eq:2-arm bound}
		\phi_{\bbZ^2}[A_{10}(r,R)]\ge c_{17}(r/R)^{1-c_{17}}.
	\end{equation}
\end{remark}

In the next sections we derive from Theorem~\ref{thm:sRSW} properties relating to sub-sequential limits for the family of interfaces.

\subsection{Large clusters touch the boundary many times}

In this section, we improve on Theorem~\ref{thm:sRSW} to show that large clusters touch the boundary of a domain at all scales and in many places. 
To simplify the statements and illustrate this slightly informal claim, we choose the context of $R$-centred domains and formulate the result as follows. 

\begin{proposition}\label{prop:polynomially many}
    For $1\le q<4$, there exists $c_{18}>0$ such that for every $R\ge r\ge 1$ and every $R$-centred domain, 
    \begin{equation}\label{eq:polynomially many}
    	\phi_\calD^0[\Lambda_R\text{ is connected in $\Lambda_{9R}$ to $(R/r)^{c_{18}}$ $r$-boxes intersecting }\partial\calD]\ge c_{18}.
    \end{equation}
\end{proposition}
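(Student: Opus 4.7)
The plan is to enhance the proof of Proposition~\ref{prop:renormalization} by tracking the number of distinct $r$-boxes on $\partial\calD$ connected to $\Lambda_R$, rather than merely the existence of a single such connection. The starting point is the lower bound
\[
\phi_\calD^0\bigl[\mathbf{N}_r(\calD,R,c_\square)\bigr] \ge c_5 c_1 (R/r)^{c_1}
\]
obtained by combining Lemma~\ref{lem:10} and Proposition~\ref{prop:fund}. Exactly as in the proof of Proposition~\ref{prop:renormalization}, one extracts a sub-collection $\mathbf{A}(\xi)$ of at least $\mathbf{N}_r/C$ activated seeds with pairwise disjoint neighbourhoods $\calD_S$, and for each $S \in \mathbf{A}(\xi)$ the event $\Circ_S \cap E_S \cap F_S$ produces a distinct $r$-box on $\partial\calD$ connected to $\Lambda_R$ inside $\Lambda_{9R}$.

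Since Proposition~\ref{prop:crucial_exist} is now available, we have $p(r)\ge c_0>0$ uniformly in $r$, so each activated seed succeeds with conditional probability at least $c := c_{\rm cir}c_\square c_0 > 0$ given $\omega|_{\calD_r}$. The FKG inequality together with the spatial Markov property applied to the disjoint regions $\calD_S \setminus \calD_r$ shows that, conditional on $\omega|_{\calD_r}$, the number of successful seeds stochastically dominates a $\mathrm{Binomial}(|\mathbf{A}(\xi)|,c)$ random variable. A Chernoff-type bound then gives that, conditional on $|\mathbf{A}(\xi)|\ge M$, at least $cM/2$ successful seeds (hence distinct boundary $r$-boxes connected to $\Lambda_R$) arise with probability at least $1-e^{-cM/8}$.

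The remaining task, and the \emph{main obstacle}, is to show that $|\mathbf{A}(\xi)| \ge c'(R/r)^{c_{18}}$ with probability bounded below by a universal constant. The first-moment bound alone does not suffice, since $|\mathbf{A}(\xi)|$ can range up to $O((R/r)^2)$, so a naive Markov-type conversion yields only a power of $R/r$ for the probability rather than a uniform constant. My plan is to establish a recursion in the spirit of the induction concluding Proposition~\ref{prop:crucial_exist}. Fix a large constant $\lambda$ and define $g(N)$ to be the infimum, over all $R$-centred domains with $R/r = N$, of the probability that at least $c'N^{c_{18}}$ many $r$-boxes on $\partial\calD$ are connected to $\Lambda_R$ in $\Lambda_{9R}$. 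The inductive hypothesis at ratio $N/\lambda$ applied at the intermediate scale $r' = \lambda r$ produces $c'(N/\lambda)^{c_{18}}$ boundary $r'$-boxes; each such box lies within distance $10r'$ of $\partial\calD$ and therefore sits in an approximately $r'$-centred sub-domain, in which the inductive hypothesis at the fixed ratio $\lambda$ applies to yield $c'\lambda^{c_{18}}$ further $r$-boxes on $\partial\calD$ connected to that $r'$-box. The spatial Markov property gives independence between refinements in disjoint sub-domains, and the Chernoff concentration from the previous paragraph guarantees that a positive fraction of these refinements succeed. The outcome is a recursion of the form
\[
g(N) \ge g(N/\lambda)\cdot\bigl(1 - e^{-c(N/\lambda)^{c_{18}}}\bigr),
\]
which, iterated starting from the base case $g(\lambda) \ge c_0$ provided by Proposition~\ref{prop:crucial_exist}, gives $g(N) \ge c_0\cdot\prod_{k\ge 0}\bigl(1-e^{-c\lambda^{kc_{18}}}\bigr) > 0$ uniformly in $N$, since the infinite product converges by super-exponential decay of its terms; a small adjustment of the exponent $c_{18}$ absorbs the multiplicative constant $c'$ into $(R/r)^{c_{18}}$ for $R/r$ large, and small ratios are handled directly by Proposition~\ref{prop:crucial_exist}.
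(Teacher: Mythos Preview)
Your recursion has a counting problem that prevents the induction from closing. Trace the box count through one step: by the inductive hypothesis at ratio $N/\lambda$ you obtain $c'(N/\lambda)^{c_{18}}$ many $r'$-boxes, and you refine each one using the base case $g(\lambda)\ge c_0$. But Proposition~\ref{prop:crucial_exist} only supplies a \emph{single} box, so the base case forces $c'\lambda^{c_{18}}\le 1$; each successful refinement therefore contributes one $r$-box, and after the Chernoff loss you end up with at most $\tfrac{c_0}{2}\,c'(N/\lambda)^{c_{18}}$ many $r$-boxes. This is strictly smaller than $c'N^{c_{18}}$ whenever $c_{18}>0$, so the exponent is driven to zero and the statement becomes trivial. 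The proposed ``small adjustment of the exponent'' cannot repair this: the loss is a fixed multiplicative factor $c_0/(2\lambda^{c_{18}})<1$ at every scale, not a one-time constant. In short, your scheme propagates probability but has no mechanism to \emph{amplify} the box count.

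The paper's proof supplies exactly this missing mechanism, via two coupled recursions rather than one. Writing $p_{r,k}(R)$ for the infimum over $R$-centred $\calD$ of the probability that $\Lambda_R$ is connected in $\Lambda_{9R}$ to $k$ boundary $r$-boxes, the first recursion
\[
p_{r,k}(R)\ge c_2\,M(r,R)\,\min\{p_{r,k}(r),(r/R)^2\}
\]
is proved exactly as Proposition~\ref{prop:renormalization} with $F_S$ replaced by its $k$-box version; it uses the first-moment bound $M(r,R)\ge c_1(R/r)^{c_1}$ to \emph{grow the probability} while $k$ is held fixed. The second recursion
\[
p_{r,2k}(20R)\ge c\,p_{r,k}(R)^2
\]
is obtained by placing two $R$-seeds at distance $\ge 20R$ from each other (so their $9R$-neighbourhoods are disjoint) and asking each to produce $k$ boxes; FKG then doubles $k$ at the cost of squaring the probability. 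The point is that these two steps interleave: once $p_{r,k}(R)\ge\lambda^{-2}$, one doubles $k$ (probability drops to $c\lambda^{-4}$), then applies the first recursion $j=\lceil 2\log_2(\lambda/c)\rceil$ times to restore the probability to $\lambda^{-2}$. Each doubling of $k$ thus costs a fixed factor $20\lambda^j$ in $R/r$, yielding $k\ge (R/r)^{c'}$ with $p_{r,k}(R)\ge\lambda^{-2}$. The first-moment estimate is what you invoked but never actually deployed inside your recursion; here it is precisely the engine that converts probability back into room for further doubling.
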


There are several ways of obtaining this result. 
One is to use an argument involving exploration, along with the fact that $p(R)$ is uniformly bounded away from $0$ (see Section~\ref{sec:idea}).
Here, we take a more direct approach based on our proof of Proposition~\ref{prop:renormalization}.

\begin{figure}
    \begin{center}
    \includegraphics[width = 0.5\textwidth]{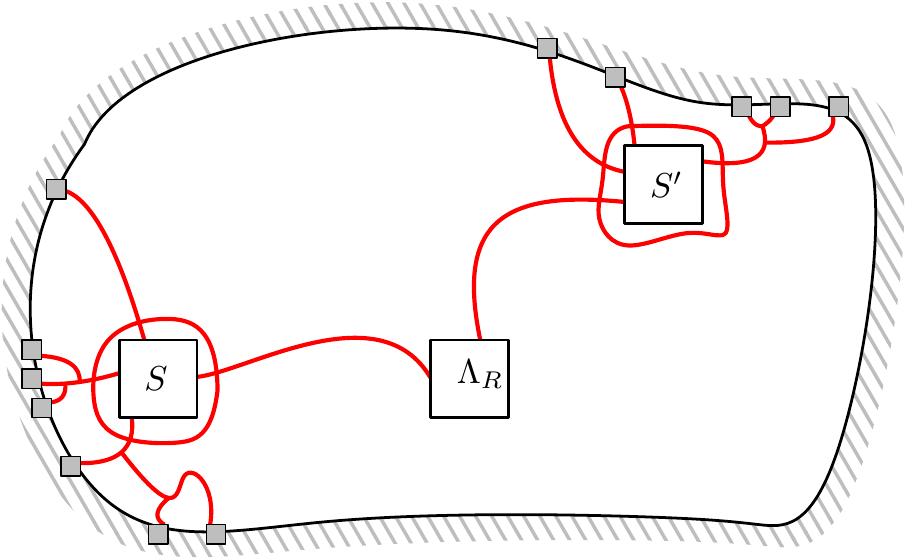}
    \caption{When the $R$-seeds $S$ and $S'$ are each connected to $k$ $r  $-boxes intersecting $\partial\calD$ within the box of radius $9R$ around them, 
    and when $\La_R$ is connected to circuits around both $S$ and $S'$, then $\La_{R}$ is connected to at least $2k$ $r  $-boxes intersecting $\partial\calD$. 
    Indeed, the boxes associated to $S$ are disjoint from those associated to $S'$ because the two seeds were chosen far from each other. }
    \label{fig:polynomially_many}
    \end{center}
\end{figure}

\begin{proof}
We will use the same notation as in the proof of Proposition~\ref{prop:renormalization}.
First, we claim that the quantity
$$
p_{r  ,k}(R):=\inf_{\calD \text{ $R$-centred}}\phi_\calD^0[\Lambda_R\text{ is connected in $\Lambda_{9R}$ to $k$  $r  $-boxes intersecting }\partial\calD]
$$
satisfies
\begin{equation}\label{eq:ahgg}
	p_{r  ,k}(R)\ge c_2M(r,R)\min\{p_{r  ,k}(r),(\tfrac rR)^2\} \qquad \text{for all $R \geq 20r$}.
\end{equation}
Indeed, the same proof as for $p(R)$ applies, 
with $F_S$ replaced by the event $F_S(r  ,k)$ that $S$ is connected in $\La_{9R}$ to $k$ $r  $-boxes intersecting $\partial\calD$.

Second, we claim that there exists $c > 0$ independent of $R$, $r  $ and $k$, such that
\begin{align}\label{eq:ahg}
	p_{r  ,2k}(20R)\ge c\, p_{r  ,k}(R)^2.
\end{align}
To see this, consider a $20R$-centred domain $\calD$ and two $R$-seeds $S$ and $S'$ that are at a distance at least $20R$ of each other, 
but within distance $30R$ of $\La_R$.  
If the events $\Circ_S\cap E_S\cap F_S(r  ,k)\cap \Circ_{S'}\cap E_{S'}\cap F_{S'}(r  ,k)$ occur, 
then $\Lambda_{20R}$ is connected in $\Lambda_{120R}$ to at least $2k$ $r  $-boxes intersecting $\partial\calD$
(see Fig.~\ref{fig:polynomially_many} and its caption for more details). 
Using the FKG inequality~\eqref{eq:FKG}, the crossing estimates~\eqref{eq:wRSW}, the definition of $p_{r  ,k}(R)$, 
and then taking the infimum over $20R$-centred domains, the above leads to~\eqref{eq:ahg}. 

Proceeding as in the proof of Proposition~\ref{prop:crucial_exist}, 
we may choose a constant $\lambda$ independent of $R$, $r  $ and $k$,
large enough that 
\begin{align}\label{eq:Delta2}
	p_{r  ,k}(\lambda R)\ge 2\min\{ p_{r  ,k}(R), \lambda^{-2}\}\qquad \text{ for all $R \geq 20r$}.
\end{align}
Moreover, due to~\eqref{eq:wRSW},
 we may assume that $\lambda$ is also such that $\inf_r   p_{r  ,1}(20r  ) \geq \lambda^{-2}$.

Suppose now that 
$p_{r  ,k}(R) \geq \lambda^{-2}$ for some $r$, $R$ and $k$. 
Then $j:= \lceil 2 \log_2 \lambda/c\rceil $ applications of~\eqref{eq:Delta2} followed by one application of~\eqref{eq:ahg} yield
\begin{align*}
	p_{r  ,2k}(20 \lambda^j R)\ge \min\{2^{j} p_{r  ,2k}(20 R), 2\lambda^{-2}\} \ge 2^j \, c \lambda^{-4}\ge \lambda^{-2}.
\end{align*}
The above together with the bound on $p_{r  ,1}(20r)$ implies the existence of $c' > 0$ 
such that $\displaystyle \inf_{r, R} p_{r  , (R/r  )^{c'}}(R) > 0$, which is the desired conclusion. 
\end{proof}

 \subsection{Large clusters touch each other}

Theorem~\ref{thm:tightness} implies that sub-sequential limits $\calF$ of collections of loops $\calF_\delta$ exist, 
but does not guarantee that the loops of $\calF$ touch each other (as is expected).
If the macroscopic loops of $\calF_\delta$ are shown to touch each other, then the same follows for those of $\calF$. 
However, the opposite is not true; it may be that the loops of $\calF$ touch each other, 
while the macroscopic loops of $\calF^\delta$ come within a mesoscopic distance of one other (as is expected when $q = 4$). 
The self-touching property for $\calF_\delta$ is useful for
\begin{itemize}
\item obtaining the full scaling limit of discrete interfaces (we refer to \cite{AizBur99,KemSmi16} for examples);
\item applying \cite{MilSheWer17} to derive, for instance for $q$ equal to 2 or 3, the convergence of interfaces in the $q$-state Potts model from the convergence of the interfaces in the random-cluster model by using a continuous version of the Edwards-Sokal coupling where clusters of the CLE($\kappa$) are colored in one of $q$ colors.
\end{itemize}

Below we show that macroscopic clusters (or equivalently macroscopic loops of $\calF^\delta$) do touch each other with high probability.
We illustrate this informal statement by three results that we believe could prove useful.
Other similar results may be obtained from Theorem~\ref{thm:sRSW} if needed. 
We insist on the fact that $q < 4$ is necessary here (see Section~\ref{sec:q=4}). 

Call a {\em chain of clusters} any sequence of distinct cluster $\mathbf C_1,\dots, \mathbf C_k$ 
with the property that, for each $1\leq j < k$, there exists a closed edge connecting $\bfC_j$ to $\bfC_{j+1}$. 
We say that such a chain {\em connects} two sets of vertices $A$ and $B$ if $\bfC_1$ intersects $A$ and $\bfC_k$ intersects $B$. 
\newcommand{\Rect}{{\rm Rect}}
For $N \geq 1$ and $\ell >0$, let $\Rect = \Rect(N,\ell) = [0,\ell N] \times [0,N]$ be the rectangle of aspect ratio $\ell$ and size $N$.

\begin{theorem}[Crossings of rectangles by chains of large clusters]\label{thm:self-touching}
	For $\alpha \geq 0 $ and $K\geq 1$, let $\calG(K,\alpha,N,\ell)$ be the event 
	that there exists a chain of at most $K$ clusters of $\omega \cap \Rect(N,\ell)$ 
	connecting the left and right sides of $\Rect(N,\ell)$, all of which have a diameter at least~$\alpha N$. 
	Then, for every $\ep,\ell > 0$, there exist $K \geq 1$ and $\alpha >0$ such that for every $N\ge1$,
	$$ \phi_{\bbZ^2}[\calG(K,\alpha,N,\ell)] \geq 1 - \eps.$$ 
\end{theorem}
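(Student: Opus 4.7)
The plan is to decompose $\Rect$ using a maximal family of pairwise disjoint dual top-to-bottom crossings, and extract the chain from the primal left-to-right crossings of the resulting slabs.

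First, I would apply Theorem~\ref{thm:RSWquads} iteratively to bound the number $k^*$ of disjoint dual top-to-bottom crossings of $\Rect$. Conditionally on the $k$ leftmost such crossings $\delta_1,\dots,\delta_k$, the probability that an additional disjoint one exists to the right of $\delta_k$ is at most $1-\eta$, where $\eta=\eta(\ell,q)>0$ is a lower bound on the primal left-right crossing probability of the remaining sub-quad (uniform in the boundary conditions induced by $\delta_1,\dots,\delta_k$, thanks to the strength of Theorem~\ref{thm:RSWquads}). Iterating gives $\phi_{\bbZ^2}[k^*\ge k]\le(1-\eta)^k$, so I choose $K_0=K_0(\eps,\ell)$ such that $\phi_{\bbZ^2}[k^*\ge K_0]\le\eps/2$.

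Next, I would show that with probability at least $1-\eps/2$, every slab in the decomposition (between consecutive $\delta_j$'s, or between a $\delta_j$ and a vertical side of $\Rect$) has horizontal width at least $\alpha N$, for a suitable $\alpha=\alpha(\eps,\ell)>0$. Indeed, if some slab had width less than $\alpha N$, a strip of width $\alpha N$ and height $N$ inside $\Rect$ would contain a dual top-to-bottom crossing; this strip has extremal distance $\alpha$ from left to right, so Theorem~\ref{thm:RSWquads} bounds the dual top-to-bottom crossing probability by $1-\tilde\eta(\alpha)$ with $\tilde\eta(\alpha)\to 1$ as $\alpha\to 0$. A union bound over the $O(K_0)$ candidate slabs concludes this step for $\alpha$ sufficiently small.

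On the intersection of the two good events (probability $\ge 1-\eps$), the dual crossings partition $\Rect$ into $k^*+1\le K_0+1$ slabs $T_0,\dots,T_{k^*}$ of horizontal width at least $\alpha N$. By maximality of the family, no dual top-to-bottom crossing lies strictly inside any $T_j$, so by duality $T_j$ admits a primal left-to-right open crossing, whose cluster $\bfC_j$ has diameter at least $\alpha N$. Consecutive clusters $\bfC_{j-1}, \bfC_j$ both touch the separating dual path $\delta_j$ and are chain-adjacent through closed edges along $\delta_j$, producing a chain of $K=K(\eps,\ell)$ big clusters crossing $\Rect$, as required.

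The main obstacle I expect is in this last step: even though $\bfC_{j-1}$ and $\bfC_j$ both touch $\delta_j$, their direct chain-adjacency (sharing a common closed edge) is not automatic, since primal vertices immediately adjacent to $\delta_j$ may belong to tiny clusters ``nested'' between them. I would address this by inserting in the chain, for each $\delta_j$, a bounded number of auxiliary primal clusters adjacent to $\delta_j$ that bridge $\bfC_{j-1}$ and $\bfC_j$; the \emph{bigness} of these auxiliary clusters can be secured using the arm-event and well-separation estimates of Section~\ref{sec:proof_arm_events}, and their number per $\delta_j$ is bounded, so $K$ remains controlled in terms of $\eps$ and $\ell$.
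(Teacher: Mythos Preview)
Your Step~1 is essentially correct and coincides with the paper's Lemma~\ref{lem:bounded hamming} (there phrased via the Hamming distance rather than disjoint dual crossings). The substantive gap is in the passage from Step~2 to Step~3. What Step~2 can at best establish is that each slab $T_j$ has large \emph{horizontal extent} (equivalently, no $\delta_j$ is trapped in a thin vertical strip). But this does \emph{not} imply that the primal left--right crossing of $T_j$ has large diameter: two wiggly dual crossings $\delta_j,\delta_{j+1}$ can come within distance $O(1)$ of each other at some height while each ranging over most of the horizontal extent of $\Rect$ at other heights; near such a pinch point the primal crossing of $T_j$ --- and hence the cluster $\bfC_j$ --- can be arbitrarily small. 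Your union bound ``over $O(K_0)$ candidate slabs'' is also not valid as stated, since the slabs are random; one would need to union bound over deterministic positions instead.

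Ruling out pinch points is precisely where an arm event with exponent strictly larger than~$2$ is required, and your proposal does not invoke one. The paper proceeds differently: it takes a \emph{minimal} set of closed edges $e_1,\dots,e_k$ whose opening creates a crossing. This automatically produces a chain $\bfC_0,\dots,\bfC_k$ (so your acknowledged Step~4 gap disappears). If some $\bfC_j$ has diameter below $\alpha N$, then both endpoints $v_j,u_{j+1}$ lie in a common $r$-box $\La_r(z)$ with $r=\lfloor\alpha N\rfloor$, and around $z$ one finds six arms of type $100100$ (two primal from $\bfC_{j-1},\bfC_{j+1}$, four dual from their disjoint boundaries); near $\partial\Rect$ these degrade to half-plane three-arm and quarter-plane two-arm events. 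Lemma~\ref{lem:arm_defects} shows these have probability $\le C(r/N)^{2+c}$ (up to logarithmic defect factors), so summing over the $O((N/r)^2)$ possible positions of $z$ gives a bound that vanishes as $\alpha\to 0$. This six-arm input is the ingredient your argument is missing; RSW-type crossing estimates in thin strips alone cannot close the gap.
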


This theorem is expected to fail for $q=4$: 
in this case the shortest chain crossing the rectangle should contain either one or a logarithmic number of clusters.
Below, we give two consequences of the theorem, closer to the informal statements announced. 

\begin{corollary}[Large clusters are connected by chains of large clusters]\label{cor:chain}~
	Write $\calH(K, \alpha, \delta ,N)$ for the event that any two clusters $\bfC$, $\bfC'$ of $\omega\cap \La_N$
	of  diameter at least $\delta N$ are connected by a chain of at most $K$ clusters, each of diameter at least $\alpha N$.
Then, for every $\ep,\delta > 0$, there exist $K \geq 1$ and $\alpha >0$ such that for every $N\ge1$,
	$$ \phi_{\bbZ^2}[\calH(K,\alpha,\delta,N)] \geq 1 - \eps.$$ 
\end{corollary}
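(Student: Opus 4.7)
The plan is to reduce Corollary~\ref{cor:chain} to repeated applications of Theorem~\ref{thm:self-touching} on a finite family of rectangles, combined with a planarity argument which produces a ``skeleton'' of chains to which every macroscopic cluster is linked.

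First, I would fix $\delta,\ep>0$ and set $s=\delta N/C$ for a large constant $C=C(\delta,\ep)$. I would build a family $\calR$ of $O(\delta^{-d})$ rectangles in $\La_N$, of both horizontal and vertical orientations, aspect ratio bounded by a fixed $\ell_0$, and side lengths in $[s,\ell_0 s]$, with centres ranging over a net of spacing $s/10$. Applying Theorem~\ref{thm:self-touching} to every $R\in\calR$ in both the left-right and top-bottom directions with error $\ep/(2|\calR|)$ yields, after a union bound, an event $\calE$ of probability at least $1-\ep$ on which each $R\in\calR$ admits chains $\gamma_R^H$ and $\gamma_R^V$ of at most $K_0=K_0(\delta,\ep)$ clusters of $\omega\cap R$, each of diameter at least $\alpha_0 s$, for some $\alpha_0=\alpha_0(\delta,\ep)>0$.

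The key tool is a planarity observation. In any rectangle $R$, an L--R open crossing in $\omega\cap R$ and a T--B chain-crossing of $R$ are continuous paths in $R$ going in perpendicular directions, hence they intersect; and since edges of $\bbZ^2$ can only meet at lattice vertices, the intersection forces the L--R crossing cluster to coincide with one of the chain clusters of $\gamma_R^V$. Thus, on $\calE$, any cluster $\bfC$ of $\omega\cap \La_N$ which admits (in $\omega\cap R$) an L--R or T--B crossing of some $R\in\calR$ is equal to, or shares a closed edge with, a cluster in the perpendicular chain of $R$. Applied in the overlap region of two rectangles $R,R'\in\calR$, the same observation shows that their chains are joined through at most $O(K_0)$ intermediate large clusters. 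Since $\calR$ is designed so that any two rectangles can be linked through a path of $O(1/\delta)$ overlapping rectangles, the union of all chains on $\calE$ forms a connected ``skeleton'' in the cluster-adjacency graph, with pairwise chain-distance $O(K_0/\delta)$.

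The main obstacle is to show that every cluster $\bfC$ of diameter at least $\delta N$ provides a clean crossing, in $\omega\cap R$, of some $R\in\calR$. In the easy case where the bounding box of $\bfC$ has aspect ratio at most $\ell_0$, one such rectangle is provided directly. The delicate case is that of thin clusters, whose bounding box is very elongated; for those, one must exhibit a sub-path of $\bfC$ contained in some $R\in\calR$ and connecting its short sides, through a pigeonhole argument on sections of $\bfC$ at scale $s$. The richness of $\calR$ in positions, scales and aspect ratios is needed precisely to accommodate clusters of arbitrary geometric shape, and this geometric-combinatorial step is the hardest part of the proof. Once it is in place, any two macroscopic clusters $\bfC,\bfC'$ are each within one closed edge of the skeleton, and the skeleton's connectedness yields a chain of length $O(K_0/\delta)$ of clusters of diameter at least $\alpha_0 s$ joining them; this concludes the proof with $K:=O(K_0/\delta)$ and $\alpha:=\alpha_0/C$.
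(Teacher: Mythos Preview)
Your overall strategy --- apply Theorem~\ref{thm:self-touching} to a finite family of rectangles, use planarity to glue the resulting chains into a connected ``skeleton'', and then argue that every macroscopic cluster hooks onto this skeleton --- is exactly the paper's strategy. The difference is entirely in the choice of the family $\calR$, and here the paper's choice makes what you call ``the hardest part of the proof'' disappear.

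The paper does not use small rectangles of side $s=\delta N/C$ and bounded aspect ratio. Instead it partitions $\La_N$ into $2/\delta$ horizontal strips $S_j=[-N,N]\times[j\delta N,(j+1)\delta N]$ of width $\delta N$ and full length $2N$ (aspect ratio $2/\delta$, which is fixed once $\delta$ is), and into the analogous vertical strips. Theorem~\ref{thm:self-touching} is applied once to each strip. The point is that any cluster of diameter at least $2\delta N$ has either horizontal or vertical extent exceeding $\delta N$, and therefore automatically contains a full crossing (top--bottom or left--right) of one of the strips. There is no ``delicate case'' of thin clusters, no pigeonhole on sections, no net of positions and scales: the strip width is matched to the diameter threshold, and that single observation replaces your entire geometric-combinatorial step. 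The skeleton is then simply the union of one horizontal and one vertical chain, so the final $K$ is $2K_0$ rather than $O(K_0/\delta)$.

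Regarding your proposal as written: the ``easy case'' does not work as you describe it. A cluster of diameter $\delta N$ with bounding box of moderate aspect ratio has a bounding box of side $\sim\delta N$, which is of order $C$ times larger than any rectangle in your family $\calR$ (whose sides lie in $[s,\ell_0 s]=[\delta N/C,\ell_0\delta N/C]$); so no rectangle of $\calR$ is ``provided directly''. One can rescue this by the same strip argument at scale $s$ --- the cluster must cross some thin strip of width $s$, and then one extracts a sub-rectangle of length $\ell_0 s$ --- but at that point you are essentially redoing the paper's trick at a smaller, unnecessary scale.
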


\begin{corollary}[Neighbouring large clusters touch each other]\label{cor:clusters_touch}~
	Write $\calF(\alpha, \delta ,N)$
	for the event that  there exist clusters $\bfC$, $\bfC'$ of $\omega \cap \La_N$ 
	of diameter at least $\delta N$ and such that $1< {\rm dist}(\bfC,\bfC') \leq \alpha N$.
	Then, for every $\ep,\delta > 0$, there exists $\alpha >0$ such that for every $N\ge1$,
	\begin{align}\label{eq:clusters_touch}
		\phi_{\bbZ^2}[\calF(\alpha,\delta,N)] < \eps.
	\end{align}
\end{corollary}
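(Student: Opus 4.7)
The argument proceeds by a union bound over the possible position and scale of the gap between two close, non-touching macroscopic clusters, controlled using the arm-event estimates from Section~\ref{sec:4}.

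Suppose $\calF(\alpha,\delta,N)$ holds, with witness clusters $\bfC,\bfC'$ of diameter at least $\delta N$ and closest-approach points $y\in\bfC$, $y'\in\bfC'$ at distance $d=|y-y'|\in(1,\alpha N]$; let $m$ denote the (rounded) midpoint of $y$ and $y'$. Then in the annulus $\La_{\delta N/10}(m)\setminus\La_{2d}(m)$ one sees an alternating four-arm event of type $1010$: two primal arms reaching into the macroscopic clusters $\bfC$ and $\bfC'$, and two dual arms separating them on either side of the narrow gap. Moreover, at the smaller scale $[1,d]$ around each endpoint $y,y'$ (which lies on the boundary of its cluster facing the gap), the local geometry realizes a half-plane arm event of type $A_{10}^+$ or $A_{101}^+$.

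The key step is to combine these into a summable per-position bound. Using quasi-multiplicativity (Proposition~\ref{prop:quasimultiplicativity}), the mixing property~\eqref{eq:mix2}, the universal half-plane estimates of Proposition~\ref{prop:universal}, and crucially the strict six-arm upper bound $\phi_{\bbZ^2}[A_{101010}(r,R)]\le C(r/R)^{2+c_{10}}$ from Corollary~\ref{cor:six arm}, the probability of the above configuration at a fixed $m$ and scale $d$ should be at most $C(d/\delta N)^{2+c}$ for some $c>0$. A union bound over positions $m$ in a $d$-mesh of $\La_N$ and over dyadic scales $d\in[2,\alpha N]$ then yields
\[
\phi_{\bbZ^2}[\calF(\alpha,\delta,N)]\le C\sum_{d\text{ dyadic}}(N/d)^{2}(d/\delta N)^{2+c}\le C'\alpha^{c}\delta^{-2-c},
\]
uniformly in $N$, which is smaller than $\eps$ once $\alpha$ is chosen sufficiently small in terms of $\eps$ and $\delta$. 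The upper bound is then upgraded from free boundary conditions to the infinite-volume measure via~\eqref{eq:mix2}.

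The main obstacle is the second step. The naive bulk four-arm bound at the midpoint gives only $(d/\delta N)^{2-c'}$, since by Proposition~\ref{prop:four arm} the four-arm exponent is strictly less than $2$, and this bound is not summable over positions and dyadic scales. To close the argument one must exploit the macroscopic nature of the clusters on both sides of the gap—through the half-plane arm events at $y$ and $y'$ combined with quasi-multiplicativity—to effectively promote the mesoscopic four-arm event to a six-arm-type event. The hypothesis $q<4$ enters in an essential way here, exactly as it does in Corollary~\ref{cor:six arm}; this is consistent with the fact (discussed after the theorem statement) that the corollary is expected to fail for $q=4$.
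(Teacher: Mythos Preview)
Your proposal correctly identifies the obstacle---the bulk four-arm exponent is strictly less than $2$, so a four-arm bound alone cannot close the union bound---but the fix you suggest does not resolve it. The half-plane events $A_{10}^+$ or $A_{101}^+$ you locate near the closest-approach points $y,y'$ live at scales $[1,d]$, whereas the deficit is at scales $[2d,\delta N]$; quasi-multiplicativity glues estimates across adjacent scale ranges, it does not transfer decay from one range to a disjoint one. The sentence ``to effectively promote the mesoscopic four-arm event to a six-arm-type event'' is thus exactly the gap: you have named what must happen without supplying a mechanism.

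The point you are missing---and this is the entire content of the paper's proof---is that a \emph{six}-arm event of type $100100$ is already directly visible in the annulus around the midpoint, not merely a four-arm $1010$. Each of $\bfC,\bfC'$ contributes one primal arm, but each also contributes \emph{two} dual arms: the dual edge-boundary of $\bfC$ crosses the annulus on both sides of $\bfC$'s primal arm, and likewise for $\bfC'$. The hypothesis ${\rm dist}(\bfC,\bfC')>1$ guarantees that no closed edge joins the two clusters, so $\partial\bfC$ and $\partial\bfC'$ are disjoint, giving four disjoint dual arms rather than the two you counted. With the bound on $A_{100100}$ from Lemma~\ref{lem:arm_defects} the per-box probability is at most $C(\alpha N/\delta N)^{2+c}$, and a single union bound over the $O(\alpha^{-2})$ many $\alpha N$-boxes in $\La_N$ closes with a net factor $(\alpha/\delta)^{c}$; no sum over dyadic gap scales $d$ is needed at all. (Boxes near $\partial\La_N$ require the half-plane four-arm event $1001$ instead, handled as in the proof of Theorem~\ref{thm:self-touching}.)
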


We start with the proof of the theorem, which is based on the following two steps. 
First, we will show that the event $\calG(K,0,N,\ell)$, that there exists a chain of at most $K$ clusters crossing $\Rect$, 
regardless of their diameter, occurs with high probability. 
Then, we will show that in any such chain, the clusters are actually large. 
The first step is contained in the following lemma. 

\begin{lemma}\label{lem:bounded hamming}
	For every $\ep>0$ and $\ell > 0$, there exists $ K \geq1$ such that for every $N \ge 1$,
	$$ \phi_{\bbZ^2}[\calG(K,0,N,\ell)] \geq 1 - \eps.$$ 
\end{lemma}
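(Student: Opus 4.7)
The plan is to reformulate the lemma via planar duality and then prove decay in the number of edge-disjoint dual crossings by an exploration scheme.

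First, I would observe that by the planar max-flow / min-cut theorem applied to $\Rect(N,\ell)$ with edge capacities $\mathbf{1}[\omega_e=0]$, the minimum length of a chain of clusters connecting the left and right sides equals $T+1$, where $T = T(\omega)$ denotes the maximum number of edge-disjoint \emph{dual} top-bottom crossings of $\Rect(N,\ell)$. Thus the lemma is equivalent to showing $\phi_{\bbZ^2}[T\ge K] \to 0$ as $K\to\infty$, uniformly in $N$. I would in fact aim for the quantitative bound
$$
\phi_{\bbZ^2}[T\ge K] \le (1-\eta)^{K}, \qquad \text{with } \eta = \eta(\ell)>0,
$$
which immediately implies the lemma upon choosing $K \ge \log(1/\ep)/\log(1/(1-\eta))$.

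To establish this, I would use a sequential exploration of leftmost dual top-bottom crossings. Set $R_0 := \Rect(N,\ell)$; inductively, if the quad $R_{i-1}$ still admits a dual top-bottom crossing, let $\gamma_i$ be its leftmost one (constructed as the interface of the exploration process started from the top-left corner, in the usual Dobrushin-type setup), and let $R_i$ be the connected component of $R_{i-1}\setminus\gamma_i$ containing the right side of $\Rect$. By the domain Markov property~\eqref{eq:SMP}, conditionally on $\gamma_1,\dots,\gamma_i$ and on the explored configuration, the law of $\omega$ on $R_i$ is a random-cluster measure on the quad $R_i$ with boundary conditions determined by the exploration.

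The heart of the proof is the uniform bound
$$
\phi_{\bbZ^2}\bigl[\gamma_{i+1}\text{ exists}\,\big|\,\gamma_1,\dots,\gamma_i\bigr] \le 1-\eta,
$$
or equivalently, $\phi^{\xi}_{R_i}[R_i\text{ has a primal LR crossing}] \ge \eta$. By Theorem~\ref{thm:sRSW} applied to $R_i$ (viewed as a quad with left arc $\gamma_i$ and right arc the right side of $\Rect$), this holds as long as the extremal distance $\ell_{R_i}$ between these two arcs is bounded by some constant $M=M(\ell)$. The main technical difficulty is that, a priori, $\gamma_i$ could be very wavy and produce a pathological $R_i$ with large extremal distance. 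I would resolve this by a geometric dichotomy: either $\gamma_i$ stays at distance at least $N/2$ from the right side of $\Rect$, in which case $R_i$ contains a sub-square near the right side of controlled aspect ratio, and Theorem~\ref{thm:sRSW} together with a boundary-to-interior connection estimate in the spirit of Proposition~\ref{prop:crucial_exist} yields a primal LR crossing of $R_i$ with positive probability; or $\gamma_i$ penetrates within distance $N/2$ of the right side, in which case $R_i$ is contained in a thin vertical strip of controlled aspect ratio, and the primal LR crossing of this strip (which is the \emph{easy} direction) has probability bounded below directly by Theorem~\ref{thm:sRSW}. Combining the two cases yields the required uniform $\eta = \eta(\ell) > 0$.

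The hard part will be the extremal-distance dichotomy: keeping careful track of the BC induced by the exploration (which, unlike in the Bernoulli case, can be non-trivially influenced by the history because the random-cluster measure with $q>1$ is not independent), and arguing that Theorem~\ref{thm:sRSW}, whose strength lies precisely in being uniform in boundary conditions and in the shape of the quad, applies in each of the two cases with a single constant $\eta$ independent of $i$, $N$, and the specific realization of $\gamma_1,\dots,\gamma_i$.
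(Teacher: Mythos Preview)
Your overall strategy---iterate an exploration and at each step use Theorem~\ref{thm:sRSW} to show a uniformly positive probability of a primal left--right crossing of the remaining region---is exactly the paper's approach. The paper phrases it on the primal side: set $\calR_k$ to be the set of vertices reachable from the left with at most $k$ closed edges, let $\calD_k$ be the component of $\Rect\setminus\calR_k$ containing the right side, and observe that a primal crossing of $\calD_k$ from $\partial\calR_k$ to the right side forces $\calG(k+1,0,N,\ell)$. Your dual reformulation via max-flow/min-cut and the exploration of leftmost dual crossings is equivalent but slightly heavier.

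The one genuine oversight is your ``main technical difficulty''. It is not a difficulty at all: the extremal distance of $R_i$ (or $\calD_k$) between the right side of $\Rect$ and the inner arc $\gamma_i$ is automatically at most $\ell$, regardless of how wavy $\gamma_i$ is. Indeed, by the reciprocity $\ell_{R_i}[\gamma_i,\text{right}]=1/\ell_{R_i}[(bc),(da)]$, where $(bc)$ and $(da)$ are the portions of $\partial R_i$ on the top and bottom of $\Rect$; since $R_i\subset\Rect$ and $(bc),(da)$ are sub-arcs of the top and bottom, monotonicity of extremal length gives $\ell_{R_i}[(bc),(da)]\ge \ell_{\Rect}[\text{top},\text{bottom}]=1/\ell$, hence $\ell_{R_i}[\gamma_i,\text{right}]\le\ell$. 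This is precisely the observation the paper uses (``the fact that $\ell_{\calD_k}[(ab),(cd)]\le\ell$''), and it makes Theorem~\ref{thm:sRSW} apply directly with $\eta=\eta(\ell)$, uniformly in $i$, $N$, and the realisation of the exploration. Your dichotomy is therefore unnecessary; moreover, as stated, its second branch is not quite right (if $\gamma_i$ approaches the right side at a single point, $R_i$ need not be contained in any thin strip, so a short crossing of a strip does not yield a crossing of $R_i$). Drop the dichotomy, insert the one-line monotonicity argument, and your proof coincides with the paper's.
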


In other words, the above states that the Hamming distance to the crossing of $\Rect$ is bounded by $K$ with high probability. 

\begin{proof}
	Fix $\eps$, $\ell$ and $N$. 
	For $k\geq 0$ let $\calR_k$ be the set of vertices connected to the left side of $\Rect$ by a path containing at most $k$ closed edges. 
	If $\calR_k$ does not intersect the right side of $\Rect$, 
	let $\calD_k$ be the connected component of $\Rect \setminus \calR_k$ that contains the right side of $\Rect$. 
	Since the configuration inside $\calD_k$ does not depend on the states of edges in $\calR_k$, 
	the measure induced by $\phi_{\bbZ^2}[\cdot| \calR_k]$ inside $\calD_k$ dominates $\phi_{\calD_k}^0$. 
	View $\calD_k$ as a quad with the arc $(ab)$ being the right side of $\Rect$ 
	and the arc $(cd)$ being the boundary of $\calD_k$ (see Fig.~\ref{fig:hamming}). 
	Using Theorem~\ref{thm:sRSW} and the fact that $\ell_{\calD_k}[(ab),(cd)]\le \ell$, 
	we find 
	\begin{align*}
		\phi_{\bbZ^2}[ \calC(\calD_k) \,| \, \calR_k] \geq \phi_{\calD_k}^0[ \calC(\calD_k)]\geq \eta(\ell).
	\end{align*}
	Finally, observe that if $\calC(\calD_k)$ occurs, then so does $\calG(k+1,0,N,\ell)$. Thus
	\begin{align*}
		\phi_{\bbZ^2}[\calG(k+1,0,N,\ell)\,|\,\calG(k,0,N,\ell)^c] \geq \eta(\ell).
	\end{align*}
	We may therefore fix $K \geq 0$ depending only on $\eps$ and $\eta(\ell)$ 
	such that $\phi_{\bbZ^2}[ \calG(K,0,N,\ell)] \geq 1- \eps $ for every $N\ge1$. 
\end{proof}

\begin{figure}
	\begin{center}
	\includegraphics[width = 0.49\textwidth]{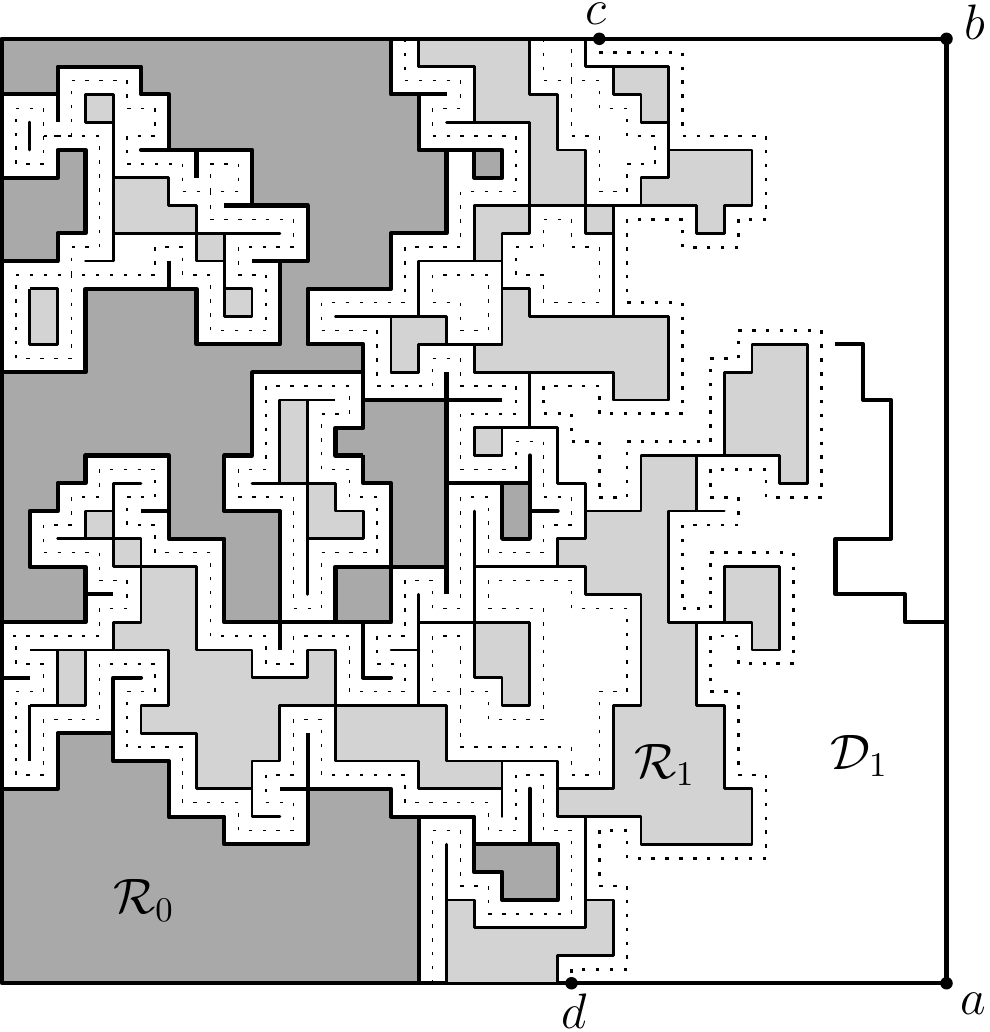}\hspace{0.01\textwidth}
	\includegraphics[width = 0.49\textwidth]{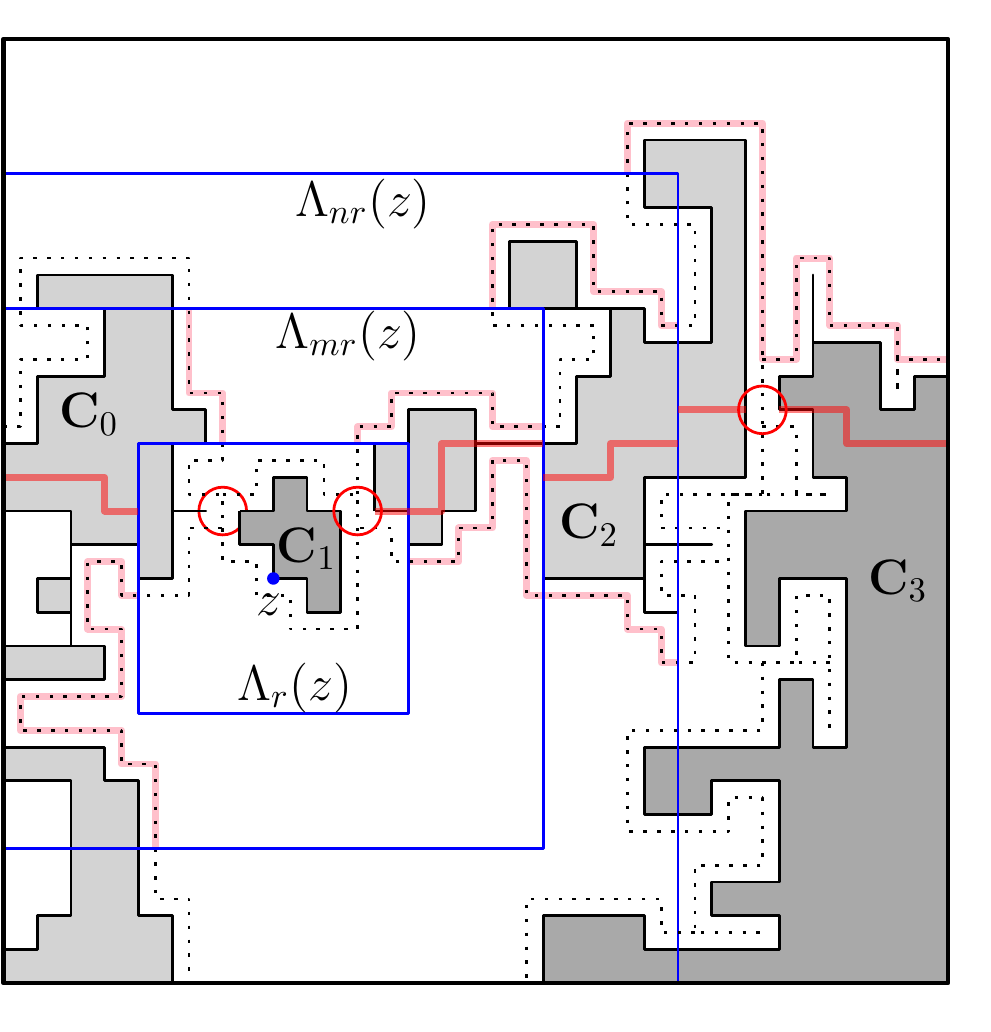}
	\caption{{\em Left:} The sets $\calR_0$ and $\calR_1$ in dark and light grey, respectively. 
	The connected component of the complement containing the right side of $\Rect$ is $\calD_1$. 
	Viewed as a quad, there is a positive probability that it is crossed horizontally, which would induce $\calG(2,0,N,\ell)$. 
	{\em Right:} a configuration with $k=3$; the clusters $\bfC_0,\dots,\bfC_3$ are depicted. 
	The edges $e_1,e_2, e_3$ are marked by red circles (their choice is not unique). 
	The boxes $\La_{r}(z)$, $\La_{mr}(z)$ and $\La_{nr}(z)$ are marked in blue; notice the arms of different types between them in red.}
	\label{fig:hamming}
	\end{center}
\end{figure}

The second step in the proof of Theorem~\ref{thm:self-touching} is provided by the lemma below. Some notation is required. 
For $R\ge r\ge1$ and $\sigma \in \{0,1\}^j$, the quarter plane arm event between radii $r$ and $R$, denoted $A_{\sigma}^{++}(r,R)$, 
is defined as $A_{\sigma}^+(r,R)$, with the arms being restricted to the quarter plane $\bbZ_+^2$.
For $K\geq 0$, an arm in $\Lambda_R\setminus\Lambda_r$ of type $1$ (resp.~type $0$) with $K$ defects 
is a path crossing $\Lambda_R\setminus\Lambda_r$ which contains at most $K$ closed (resp.~open) edges. 
For $K\geq 1$ and $\sigma$ of length $j$, define $A_{\sigma}(K;r,R)$ as the event that there exist 
$j$ disjoint arms with $K$ defects in total, from the inner to the outer boundary of $\Lambda_R\setminus\Lambda_r$, 
which are of type $\sigma_1,\dots, \sigma_j$, when indexed in counterclockwise order.
Define the half-plane and quarter-plane arm events with defects, written $A^+_\sigma(K;r,R)$ and $A^{++}_\sigma(K;r,R)$, in the same way.

The following lemma is a straightforward consequence of~\eqref{eq:SIX-ARM}, 
the crossing estimates~\eqref{eq:wRSW} and the quasi-multiplicativity~\eqref{eq:MULTIPLICATIVITY};
see \cite[Prop.~17]{Nol08} for a proof in the case of percolation which adapts readily to our case. 
Note that the sequence in the first equation is not the same as in~\eqref{eq:SIX-ARM}, but the same bound may be proved without difficulty.
	
\begin{lemma}\label{lem:arm_defects}
	There exist $c_{19},C_{19}>0$ such that for every $K$ and every $R\ge r\ge1$, 
	\begin{align*}
		\phi_{\bbZ^2}[A_{100100}(K;r,R)]  & \leq C_{19}[1 + \log(R/r)]^K \cdot (r/R)^{2+c_{19}},\\
		\phi_{\bbZ^2}[A_{10}^+(K;r,R)] 	  & \leq C_{19}[1 + \log(R/r)]^K \cdot (r/R)^2,\\
		\phi_{\bbZ^2}[A_{10}^{++}(K;r,R)] & \leq C_{19}[1 + \log(R/r)]^K \cdot (r/R)^{1+c_{19}}.
	\end{align*}
\end{lemma}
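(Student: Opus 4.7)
The plan is to reduce each $K$-defect arm event to its no-defect version at the cost of a factor $[\log(R/r)]^K$, and then apply the corresponding no-defect polynomial upper bound. The reduction rests on a dyadic scale argument: the at most $K$ defects can only lie in at most $K$ dyadic annuli, and away from those annuli one sees the full no-defect arm event at the corresponding scale.

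Concretely, set $n := \lceil \log_2(R/r)\rceil$ and $r_i := 2^i r$ for $i = 0,\dots,n$, and let $\bullet$ stand for either the plane, half-plane, or quarter-plane version. On $A^{\bullet}_\sigma(K;r,R)$, each defect edge lies in some dyadic annulus $\La_{r_{i+1}}\setminus\La_{r_i}$, so there exists a subset $I \subset \{0,\dots,n-1\}$ with $|I|\le K$ such that for every $i \notin I$ the arms restrict to $\La_{r_{i+1}}\setminus\La_{r_i}$ without any defect, realizing $A^\bullet_\sigma(r_i,r_{i+1})$. The mixing property~\eqref{eq:mix2} applied iteratively across the nested annuli yields
\[
\phi_{\bbZ^2}\Big[\bigcap_{i\notin I} A^\bullet_\sigma(r_i,r_{i+1})\Big] \;\le\; C_1^{n-|I|} \prod_{i\notin I}\phi_{\bbZ^2}[A^\bullet_\sigma(r_i,r_{i+1})],
\]
for a universal constant $C_1$. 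The $|I|\le K$ missing factors are bounded below by a uniform positive constant via~\eqref{eq:wRSW}, so we may reintroduce them at a cost of $C_2^K$; quasi-multiplicativity (Proposition~\ref{prop:quasimultiplicativity}, together with its half- and quarter-plane analogues proved identically from Theorem~\ref{thm:sRSW}) then collapses the full product into $C_3\,\phi_{\bbZ^2}[A^\bullet_\sigma(r,R)]$. Summing over the $\binom{n}{K}\le n^K$ choices of $I$ gives
\[
\phi_{\bbZ^2}[A^\bullet_\sigma(K;r,R)] \;\le\; [C\, n]^K \,\phi_{\bbZ^2}[A^\bullet_\sigma(r,R)].
\]
Combined with $n\le 1+\log_2(R/r)$ and the three no-defect polynomial bounds, this produces the three claimed inequalities.

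For the no-defect bounds themselves, $A_{10}^+$ is exactly~\eqref{eq:UNIVERSAL2}; for $A_{10}^{++}$, the exponent $1+c_{19}$ follows from the fact that in the quarter-plane the coexistence of a primal and a dual arm forces an angular separation that by~\eqref{eq:wRSW} and Theorem~\ref{thm:sRSW} costs a strict polynomial gain $(r/R)^{c}$ beyond the trivial one-arm bound; and for $A_{100100}$, one mimics the proof of Corollary~\ref{cor:six arm} by conditioning on a suitable $5$-arm sub-event of probability $O((r/R)^{2})$ (via Proposition~\ref{prop:universal} combined with Proposition~\ref{prop:localization} to permute and re-localize arms as needed) and then using Theorem~\ref{thm:sRSW} to bound by $(r/R)^{c}$ the conditional probability that a sixth arm of the prescribed type is added. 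The main obstacle is establishing quasi-multiplicativity and the no-defect polynomial bound for the non-alternating sequence $100100$ as well as in the quarter-plane: both rely crucially on the uniformity in boundary conditions provided by Theorem~\ref{thm:sRSW}, since the classical separation-of-arms argument used when arms alternate must be replaced by the crossing estimates in general quads, exactly as in \cite{CheDumHon13} for $q=2$.
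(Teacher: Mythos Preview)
Your approach is precisely the one the paper intends: it does not give a proof but defers to \cite[Prop.~17]{Nol08}, which is exactly the dyadic-scale argument you spell out (localize the $\le K$ defects to at most $K$ annuli, decouple via mixing, recombine via quasi-multiplicativity, and union-bound over the placement of the bad annuli), after which one inputs the no-defect polynomial bounds. Your remarks on establishing quasi-multiplicativity and the no-defect bound for the non-alternating sequence $100100$ and for the quarter-plane two-arm via Theorem~\ref{thm:sRSW} and the methods of \cite{CheDumHon13} are also the intended route.

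One genuine discrepancy to flag: for the second line you invoke~\eqref{eq:UNIVERSAL2}, which gives $\phi_{\bbZ^2}[A_{10}^+(r,R)]\asymp r/R$, not $(r/R)^2$. In fact the bound $(r/R)^2$ stated in the lemma is \emph{impossible} for $A_{10}^+$, since~\eqref{eq:UNIVERSAL2} also provides a matching lower bound of order $r/R$. The second line of the lemma is a typo for the three-arm event $A_{101}^+$, consistent with its only use downstream in the proof of Theorem~\ref{thm:self-touching}, where the half-plane input is explicitly $\phi_{\bbZ^2}[A_{101}^{+}(K;mr,nr)]$. With $A_{10}^+$ replaced by $A_{101}^+$ and~\eqref{eq:UNIVERSAL2} replaced by~\eqref{eq:UNIVERSAL3}, your argument goes through verbatim.
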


We are now in a position to prove Theorem~\ref{thm:self-touching}.
\begin{proof}[Theorem~\ref{thm:self-touching}]
	Fix $\eps,\ell > 0$ and $N$. For clarity, we will assume $\ell \geq 1$; the same proof applies for $\ell <1$. 
	By Lemma~\ref{lem:bounded hamming}, we may choose $K = K(\eps,\ell) \geq 1$ such that
	$\calG(K,0,N,\ell)$ occurs with probability at least $1 - \eps$.
	Henceforth we assume $K$ fixed as above, and focus on configurations $\omega$ in $\calG(K,0,N,\ell)$.
	Our goal is to prove that $\omega \in \calG(K,\alpha,N,\ell)$ with high probability for some $\alpha >0$. 
	
	If $\omega$ contains a horizontal crossing of $\Rect$, then the cluster containing the crossing has diameter at least $\ell N$ 
	and $\omega\in \calG(K,\alpha,N,\ell)$ for any $0 < \alpha \leq \ell$. 
	
	Next, we focus on the situation where $\omega$ does not contain a horizontal crossing of $\Rect$. 
	Let $e_1,\dots, e_k$ be a minimal set of edges such that $\omega\cup\{e_1,\dots, e_k\}$ contains a horizontal crossing of $\Rect$.
	Write $e_i = (u_i,v_i)$ with $v_i$ connected to $u_{i+1}$ in $\omega \cap \Rect$ for all $1 \leq i < k$, 
	and $u_0$ and $v_k$ connected to the left and right sides of $\Rect$, respectively. 
	Write $\bfC_i$ for the cluster of $v_i$ in $\omega \cap \Rect$ and $\bfC_{0}$ for the cluster of $u_1$.
	By the minimality of $e_1,\dots, e_k$, the clusters $\bfC_0,\dots, \bfC_k$ are all distinct. 
	
	We start by analysing $\bfC_j$ with $1 \leq j < k$. 
	Fix such a value $j$ and let us assume that $\|v_j - u_{j+1}\| \leq \alpha N$ for some small constant $\alpha >0$ to be chosen later
	(here $\|\cdot\|$ denotes the $L^\infty$ norm). 
	Write $r = \lfloor \alpha N \rfloor$ and assume for convenience that $N /r \in \bbN$ and $\ell N /r \in \bbN$.
	
	By our assumption on the distance between $v_j$ and $u_{j+1}$, there exists a $r$-box $\La_r(z)$ containing both $v_j$ and $u_{j+1}$. 
	Let $m r$ be the distance between $z$ and $\partial \Rect$ 
	(recall that $z \in r\bbZ^2$ and that $N/r \in \bbN$, hence this distance is indeed an integer multiple of $r$, as the notation suggests). 
	Let $n r$ be the distance from $z$ to the three sides of $\partial \Rect$ furthest from $z$. 
	Then, we claim that in $\tilde\omega:=\omega\cup\{e_1,\dots, e_{j-1}, e_{j+2}, \dots, e_k\}$ there exist\footnote{
		Let us justify (a)--(c).
    	Let $C_{\text{left}}$ and $C_{\text{right}}$ be the clusters in $\tilde\omega$ of $u_{j}$ and $v_{j+1}$, respectively.
    	Then $C_{\text{left}}$ intersects the left side of $\Rect$ and $C_{\text{right}}$ the right side. 
    	Write $\partial C_{\text{left}}$ for the paths of dual-open edges separating  $C_{\text{left}}$ from $\Rect \setminus  C_{\text{left}}$.
    	Define $\partial C_{\text{right}}$ in the same way. 
    	By the minimality of $e_1,\dots, e_k$, the boundaries $\partial C_{\text{left}}$ and $\partial C_{\text{right}}$ of the two clusters are disjoint 
		(since we are considering the edge-boundaries of ${\bf C}_j$ and ${\bf C}_{j+2}$ and that an intersection would imply the existence of a shortcut). 
    	
    	We start by explaining (a): both clusters $C_{\text{left}}$ and $C_{\text{right}}$ and their boundaries intersect $\La_r$ and $\partial \La_{mr}$. 
    	It follows that each cluster contains a primal open-path between $\La_r$ and $\partial \La_{mr}$
    	and the boundaries of the two clusters contain two dual-open paths between   $\La_r$ and $\partial \La_{mr}$ each. 
    
    	We move on to (b): suppose (as in Fig.~\ref{fig:hamming}) that $z$ is closest to the left side of $\Rect$. 
    	Then $C_{\text{right}}$ and its boundary intersect both $\La_{mr}$ and $\partial \La_{nr}$; 
    	as a consequence $C_{\text{right}}$ contains a path of open edges and its boundary contains two disjoint paths 
		of dual-open edges between $\La_{mr}$ and $\partial \La_{nr}$.
    	The same holds when $z$ is closest to the right side of $\Rect$.
    	If $z$ is closest to the top or bottom of $\Rect$, then $C_{\text{left}}$ and $C_{\text{right}}$ contain one arm of type $1$ each, 
		and their boundaries an arm of type $0$ each. Hence we deduce that the half-plane arm event with arms of types $1001$ occurs. 
		This is obviously contained in the half-plane arm event with arms of types $101$. 
    
    	Finally we show (c): 
    	The two sides closest to $z$ are necessarily adjacent. Hence we may suppose (as in Fig.~\ref{fig:hamming}) that they are the left and bottom ones. 
    	Then, $C_{\text{right}}$ contains an arm of type $0$ from $\La_{nr}$ to the right side of $\Rect$, contained in the quarter-plane $\bbZ_+^2$, 
    	and its boundary contains an arm of type $0$ from $\La_{nr}$ to either the top or the right side of $\Rect$, contained in the same quarter-plane. 
    
		We deduce that arms as in (a)--(c) also exist in $\omega$, but with $K$ defects at most.
	}:
	\begin{itemize}
    	\item[(a)] six arms in the annulus $\La_{mr}(z)\setminus \La_r(z)$ of types $100100$;
    	\item[(b)] three arms in the half-space annulus $(\La_{nr}(z)\setminus \La_{mr}(z)) \cap \Rect$ 
    	of types $101$ (if the side of $\Rect$ closest to $z$ is either the left or right one) or $010$ (otherwise);
    	\item[(c)] two arms in the quarter-plane annulus $\Rect \setminus \La_{nr}(z)$ of types $10$ or $01$, 
    	from $\La_{nr}(z)$ to the two sides of $\Rect$ furthest from $z$. 
	\end{itemize}
	Write $E(z)$ for the event that arms as in (a)-(c) exists, with at most $K$ defects.	The mixing property ~\eqref{eq:mix2} and Lemma~\ref{lem:arm_defects} give that 
	\begin{align*}
		\phi_{\bbZ^2}[E(z)] 
		&\leq C \phi_{\bbZ^2}[A_{100100}(K;r,mr)] \phi_{\bbZ^2}[A_{101}^{+}(K;mr,nr)]\, \phi_{\bbZ^2}[A_{10}^{++}(K;nr,N)]	\\
		&\leq C' [1+\log (N/r)]^{3K}(1/m)^{2 + c_{19}} (m/n)^{2}  (nr/N)^{1 + c_{19}} .
	\end{align*}
	Now, observe that for any $m\leq n\leq N/r$, 
	there exist at most eight points $z \in r \bbZ^2 \cap \Rect$ at a distance $mr$ from $\partial \Rect$ 
	and a distance $nr$ to the three sides of $\Rect$ furthest from $z$. 
	Applying a union bound, it follows that
	\begin{align*}
		\phi_{\bbZ^2}\big[\bigcup_{z}E(z)\big]
		& \leq 8C'  [1+\log (N/r)]^{3K} \sum_{1 \leq m \leq n \leq N/r} (1/m)^{2 +c_{19}} (m/n)^{2}  (nr/N)^{1 + c_{19}}\\
		&\leq 8C' [1+\log (N/r)]^{3K} (r/N)^{c_{19}},
	\end{align*}
	where the union if over all $z \in r \bbZ^2 \cap \Rect$ and the last inequality is obtained through straightforward computation. 
	Suppose now that $\alpha$ (and hence $r/N$) is chosen small enough such that the above is smaller than $\eps$; 
	notice that the choice of $\alpha$ depends on $K$, but that $K$ only depends on $\eps$ and $\ell$, not on $N$. 
	Then,
	\begin{align}\label{eq:HK}
		\phi_{\bbZ^2}[\calG(K,0,N,\ell) \cap \bigcap_{z}E(z)^c ] \geq 1- 2\eps.
	\end{align}
	Moreover, on the event above, each ${\bf C}_j$ with $1 \leq j < k$ has diameter at least $\alpha N$, 
	since it contains two points $v_j$ and $u_{j+1}$ at a distance at least $\alpha N$ from each other.
	
	At this stage, we should also exclude that the diameters of ${\bf C}_0$ and ${\bf C}_k$ are small. 
	We do this below through a similar argument as for the diameters of ${\bf C}_1,\dots, {\bf C}_{k-1}$.
	Suppose that the diameter of ${\bf C}_0$ is smaller than $r$. 
	Then, there exists $z \in \{0\}\times r\bbZ$ so that $u_1 \in \La_r(z)$. 
	Let $nr$ denote the distance from $z$ to the top and bottom of $\Rect$. 
	The same analysis as above shows that $\omega$ contains 
	\begin{itemize}
    	\item[(b')] three arms in the half-space annulus $(\La_{nr}(z)\setminus \La_{r}(z)) \cap \Rect$ of types $010$ with at most $K$ defects;
    	\item[(c')] two arms, with at most $K$ defects, in the quarter-plane annulus $\Rect \setminus \La_{nr}(z)$ of types $01$ or $10$, 
    	from $\La_{nr}(z)$ to the two sides of $\Rect$ furthest from $z$. 
	\end{itemize} 
	We recognise above the event $E(z)$ for $z$ on the left boundary of $\Rect$.
	The same analysis applies  when the diameter of ${\bf C}_k$ is smaller than $r$. 
	In conclusion, the event in~\eqref{eq:HK} guarantees the occurrence of $\calG(K,\alpha,N,\ell)$, and the bound in~\eqref{eq:HK} implies the result. 	
\end{proof}

Finally, we prove the two corollaries.

\begin{proof}[Corollary~\ref{cor:chain}]
	Fix $\eps, \delta > 0$ and $N$. 
	We may assume $N$ larger than some threshold, and for simplicity we will consider $\delta N$ and $1/\delta$ to be integers. 

	Partition $\La_N$ into strips $S_j = [-N,N] \times [j\delta N, (j+1)\delta N]$ with $-1/\delta \leq j <1/\delta$. 
	Each strip $S_j$ is a translate of $\Rect(\delta N, 2/\delta)$. 
	Let $\alpha$ and $K$ be such that 
	$$\phi_{\bbZ^2}[\calG(K,\alpha/\delta ,\delta N,2/\delta)] \geq 1 - \eps\delta/4.$$
	By Theorem~\ref{thm:self-touching}, such values of $\alpha > 0$ and $K\geq 0$ exist, and only depend on $\eps$ and $\delta$. 
	
	Write $\calG_h$ for the event that $\calG(K,\alpha/\delta,\delta N,2/\delta)$ occurs in every strip $S_j$, and $\calG_v$ for the rotation by $\pi/2$ of $\calG_h$. 
	Then, due to our choice of $\alpha$ and $K$,
	\begin{align}\label{eq:Ghv}
		\phi_{\bbZ^2}[\calG_h \cap \calG_v] \geq 1 - \eps.
	\end{align}
	Moreover, we claim that if $\calG_h \cap \calG_v$ occurs, then any two clusters in $\La_N$ of diameter at least $2 \delta N$ 
	are connected by a chain of at most $2K$ clusters of diameter at least $\alpha N$. 
	
	Indeed, any cluster of diameter at least $2 \delta N$ contains a vertical crossing of a strip $S_j$, 
	or a horizontal crossing of the rotation by $\pi/2$ of a strip $S_j$. 
	As such, it is contained in one of the chains of clusters crossing horizontally the strips $S_j$, or vertically their rotations. 
	Finally, if $\bfC$ and $\bfC'$ are members of two such chains, we can exhibit a chain of clusters connecting $\bfC$ to $\bfC'$ 
	by following a chain crossing some $S_j$ horizontally, then one crossing vertically the rotation of some $S_{j'}$. 
	By construction, the chain thus obtained contains only clusters of diameter at least $\alpha N$ and at most $2K$ of them. 
	In conclusion,~\eqref{eq:Ghv} implies that
	$\phi_{\bbZ^2}[\calH(2K,\alpha,2\delta,N)] \geq 1 - \eps. $
\end{proof}

\begin{proof}[Corollary~\ref{cor:clusters_touch}]	
	Fix some $\delta >\alpha > 0$ and $N \geq 1$. Assume for simplicity that $r := \alpha N$ and $N/r=1/\alpha$ are integers.
	By the same analysis as in the proof of  Theorem~\ref{thm:self-touching}, 
	if $\calF(\alpha, \delta,N)$ occurs, then there exists a $r$-box $\La_r(z)$ that intersects two clusters $\bfC$ and $\bfC'$
	of diameters at least $\delta N$ and at a distance at least $2$ from each other. 
	Write $m r$ for the distance between $z$ and $\partial \La_N$.
	Then we claim that $\omega$ contains
	\begin{itemize}
    	\item[(a)] six arms in the annulus $\La_{mr}(z)\setminus \La_r(z)$ of types $100100$;
    	\item[(b)] four arms in the half-space annulus $(\La_{\delta N}(z)\setminus \La_{mr}(z)) \cap \La_N$ of types $1001$.
	\end{itemize}
	Indeed, the two primal arms are provided by $\bfC$ and $\bfC'$ and the dual ones by their disjoint boundaries. 
	Following the proof of Theorem~\ref{thm:self-touching}, there exists $C > 0$ independent of $\alpha,\delta$ or $N$ such that 
	\begin{align*}
		\phi_{\bbZ^2}[\calF(\alpha, \delta,N)]
		&\leq C (\alpha/\delta)^{c_{19}}.
	\end{align*}
	Thus, for $\eps,\delta > 0$, in order to obtain~\eqref{eq:clusters_touch},
	it suffices to choose $\alpha$ small enough for the above to be smaller than $\eps$.
\end{proof}

\newcommand{\etalchar}[1]{$^{#1}$}
\providecommand{\bysame}{\leavevmode\hbox to3em{\hrulefill}\thinspace}
\providecommand{\MR}{\relax\ifhmode\unskip\space\fi MR }
\providecommand{\MRhref}[2]{%
  \href{http://www.ams.org/mathscinet-getitem?mr=#1}{#2}
}
\providecommand{\href}[2]{#2}

\end{document}